\newcommand{\Rmnum}[1]{\uppercase\expandafter{\romannumeral #1}} 
\numberwithin{equation}{section}
\newtheorem{Lemma}{Lemma}[section]
\newtheorem{Theorem}{Theorem}[section]
\newcounter{saveeqn}
\def\@maketitle{%
	\newpage
	\null
	\vskip 2em%
	\begin{center}%
		\let \footnote \thanks
		{\LARGE \@title \par}%
		\vskip 1.5em%
		{\large
			\lineskip .5em%
			\begin{tabular}[t]{c}%
				\@author
			\end{tabular}\par}%
	\end{center}%
	\par
	\vskip 1.5em}
\title{
	\textbf{Optimal convergence analysis of fully discrete SAVs-FEM for the Cahn-Hilliard-Navier-Stokes equations}\thanks{Supported by the National Natural Science Foundation of China (No.11971337) 
	and the Natural Science Foundation of Sichuan Province (No. 2025ZNSFSC0070)}
}
\author[1]{Haijun Gao}
\author[2]{Xi Li}
\author[3]{Cheng Wang}
\author[1]{Minfu Feng\thanks{Corresponding author. 
		E-mail: 
			gaohijun@163.com, lixi@cdut.edu.cn, 
			cwang1@umassd.edu, 
			fmf@scu.edu.cn.}}
\affil[1]{School of Mathematics, Sichuan University, 610065, Chengdu, China.}
\affil[2]{School of Mathematical Sciences, Chengdu University of Technology, Chengdu, Sichuan 610059, China.}
\affil[3]{Mathematics Department, University of Massachusetts Dartmouth, North Dartmouth, MA 02747, USA.}
\date{}
\begin{document}
	\maketitle
	\begin{abstract}
		\indent
		We construct a fully discrete numerical scheme that is linear, decoupled, and unconditionally energy stable, and analyze its optimal error estimates for the Cahn-Hilliard-Navier-Stokes equations. For time discretization, we employ the two scalar auxiliary variables (SAVs) and the pressure-correction projection method. For spatial discretization, we choose the $P_r \times P_r \times \mathbf{P}_{r+1} \times P_r$ finite element spaces, where $r$ is the degree of the local polynomials, and derive the optimal $L^2$ error estimates for the phase-field variable, chemical potential, and pressure in the case of $r \geq 1$, and for the velocity when $r \geq 2$, without relying on the quasi-projection operator technique proposed in \textit{[Cai et al. SIAM J Numer Anal, 2023]}. Numerical experiments validate the theoretical results, confirming the unconditional energy stability and optimal convergence rates of the proposed scheme. Additionally, we numerically demonstrate the optimal $L^2$ convergence rate for the velocity when $r=1$.
		\\
		
		\noindent{\textbf{Keywords:}
			Cahn-Hilliard-Navier-Stokes; SAV; Unconditional energy stability;  Error estimates.}
	\end{abstract}
	\thispagestyle{empty}
	
\section{Introduction}\label{section_introduction}
In this article, we consider the Cahn-Hilliard-Navier-Stokes (CHNS) equations \cite{2006_FengXiaobing_FullydiscretefiniteelementapproximationsoftheNavierStokesCahnHilliarddiffuseinterfacemodelfortwophasefluidflows} as follows:
\begin{subequations}
	\label{eqCHNS01}
	\begin{align}
		\frac{\partial \phi}{\partial t}+(\mathbf{u}\cdot \nabla)\phi-M\Delta \mu =0~~&\text{in}~~ \Omega\times (0,T],\\
		\mu+\lambda \Delta \phi-\lambda G'(\phi)=0~~&\text{in}~~ \Omega\times (0,T],\\
		\frac{\partial \mathbf{u}}{\partial t}+\mathbf{u}\cdot\nabla \mathbf{u}-\nu\Delta \mathbf{u}+\nabla p-\mu\nabla\phi=0~~&\text{in}~~ \Omega\times (0,T],\\
		\nabla\cdot \mathbf{u}=0~~&\text{in}~~ \Omega\times (0,T].
	\end{align}
\end{subequations}
The $G(\phi)=\frac{1}{4 \epsilon^2}(\phi^2-1)^2$ is a nonlinear free energy density, where $\epsilon$ denotes the interface width and $M,\lambda,\nu>0,$ describes the mobility, mixing coefficient, and fluid viscosity, respectively.
We consider the following  no-flux or no-flow boundary and initial conditions of 
\eqref{eqCHNS01}:
\begin{subequations}
	\label{eq_boundary_initial_conditions_equations}
	\begin{align}
		\label{eq_boundary_conditions_equation}
		\frac{\partial \phi}{\partial \mathbf{n}}=\frac{\partial\mu}{\partial \mathbf{n}}=0,\quad\mathbf{u}=0,&\quad\text{on}\quad \partial\Omega\times  (0,T],\\
		\label{eq_initial_conditions_equation}
		\phi(\mathbf{x},0)=\phi^0,\quad\mathbf{u}(\mathbf{x},0)=\mathbf{u}^0,&\quad\text{in}\quad \Omega,
	\end{align}
\end{subequations}
where $\Omega$ is a bounded domain in $\mathbb{R}^2$ 
with boundary $\partial \Omega$, 
and $\mathbf{n}$ denotes the unit outward normal vector on $\partial\Omega$.
The unknowns are the phase field function $\phi$ and the chemical potential $\mu$, 
the velocity $\mathbf{u}$, 
the pressure $p$. 
It is well-known that the energy dissipation law for system  \eqref{eqCHNS01}-\eqref{eq_boundary_initial_conditions_equations} is as follows:
\begin{equation}
	\frac{d E(\phi, \mathbf{u})}{d t}  =-M\|\nabla \mu\|^2-\nu\|\nabla \mathbf{u}\|^2,
\end{equation}
where $E(\phi, \mathbf{u}) 
=\int_{\Omega}\left\{\frac{1}{2}|\mathbf{u}|^2+\frac{\lambda}{2}|\nabla \phi|^2+\lambda G(\phi)\right\} d \mathbf{x}$ is the total energy, and $\|\cdot\|$  is $L^2$ norm.

In the last two decades, for numerical algorithms for the CHNS equation, researchers constructed various different discrete schemes. 
Kay et al. \cite{2007_Kay_David_and_Welford_Richard_Efficient_numerical_solution_of_Cahn_Hilliard_Navier_Stokes_fluids_in_2D} proposed the decoupled semi-discrete scheme by using explicit treatment for the coupled terms of the Cahn-Hilliard (CH) equation. Their discrete scheme computes CH and Navier-Stokes (NS) separately, but the whole discrete system is still a nonlinear scheme. For the CHNS equations, many researchers mainly proposed corresponding solutions for the nonlinear and coupled terms, such as convex splitting techniques \cite{2012_ShenJie_Second_order_convex_splitting_schemes_for_gradient_flows_with_Ehrlich_Schwoebel_type_energy_application_to_thin_film_epitaxy,2013_Baskaran_Convergence_analysis_of_a_second_order_convex_splitting_scheme_for_the_modified_phase_field_crystal_equation}, stabilization methods \cite{2015_ShenJie_Decoupled_energy_stable_schemes_for_phase_field_models_of_two_phase_incompressible_flows,2018_CaiYongyong_Error_estimates_for_a_fully_discretized_scheme_to_a_Cahn_Hilliard_phase_field_model_for_two_phase_incompressible_flows} , the Invariant Energy Quadratization (IEQ) methods \cite{2017_YangXiaofeng_Numerical_approximations_for_the_molecular_beam_epitaxial_growth_model_based_on_the_invariant_energy_quadratization_method,2017_ZhaoJia_Numerical_approximations_for_a_phase_field_dendritic_crystal_growth_model_based_on_the_invariant_energy_quadratization_approach}, and the Scalar Auxiliary Variable (SAV) methods \cite{2018_Shenjie_Xujie_SAV,2018_Shenjie_Xujie_CAEAFTSAVSYGF}, etc. The SAV method is used for the CHNS equation\cite{2020_LiXiaoli_On_a_SAV_MAC_scheme_for_the_Cahn_Hilliard_Navier_Stokes_phase_field_model_and_its_error_analysis_for_the_corresponding_Cahn_Hilliard_Stokes_case_,2021_JiangNan_Stabilized_scalar_auxiliary_variable_ensemble_algorithms_for_parameterized_flow_problems,2021_LiMinghui_New_efficient_time_stepping_schemes_for_the_Navier_Stokes_Cahn_Hilliard_equations,2022JieShen_LiXiaoliMSAVCHNStwo_phase_incompressible_flows,2022_ChenYaoyao_CHNS_2022_AMC,2023_LiYibao_Consistency_enhanced_SAV_BDF2_time_marching_method_with_relaxation_for_the_incompressible_Cahn_Hilliard_Navier_Stokes_binary_fluid_model,2024_WangCheng_Efficient_finite_element_schemes_for_a_phase_field_model_of_two_phase_incompressible_flows_with_different_densities} to realize decoupled and linear numerical discretization schemes.
In the time semi-discrete scheme, Shen et al. \cite{2010_ShenJie_Energy_stable_schemes_for_Cahn_Hilliard_phase_field_model_of_two_phase_incompressible_flows,2010_ShenJie_A_phase_field_model_and_its_numerical_approximation_for_two_phase_incompressible_flows_with_different_densities_and_viscosities,2015_ShenJie_Decoupled_energy_stable_schemes_for_phase_field_models_of_two_phase_incompressible_flows} proposed a series of linear, first-order, unconditionally energy-stable, weakly decoupled, and fully decoupled schemes for the CHNS equations.
 Li et al. \cite{2022JieShen_LiXiaoliMSAVCHNStwo_phase_incompressible_flows} proposed first-order and second-order time-discrete schemes for the CHNS equations based on multiple SAV methods, but they did not provide the optimal error analysis for the fully discrete scheme. 
More numerical discretization schemes for CHNS type equations can be found in  \cite{2015_Diegel_Analysis_of_a_mixed_finite_element_method_for_a_Cahn_Hilliard_Darcy_Stokes_system,2015_HeYinnian_Unconditional_convergence_of_the_Euler_semi_implicit_scheme_for_the_three_dimensional_incompressible_MHD_equations,2017_CaiYongyong_Error_estimates_for_time_discretizations_of_Cahn_Hilliard_and_Allen_Cahn_phase_field_models_for_two_phase_incompressible_flows,2017_Diegel_Convergence_analysis_and_error_estimates_for_a_second_order_accurate_finite_element_method_for_the_Cahn_Hilliard_Navier_Stokes_system,2019_YangXiaofeng_Convergence_analysis_of_an_unconditionally_energy_stable_projection_scheme_for_magneto_hydrodynamic_equations,2020_WangLiupeng_Error_analysis_of_SAV_finite_element_method_to_phase_field_crystal_model}. The CHNS model is a multivariate, strongly nonlinear and strongly coupled system, and there is cumbersome and difficult error analysis, especially for optimal error estimation, for fully discrete numerical schemes.

Feng et al. \cite{2006_FengXiaobing_FullydiscretefiniteelementapproximationsoftheNavierStokesCahnHilliarddiffuseinterfacemodelfortwophasefluidflows,2007_FengXiaobing_Analysis_of_finite_element_approximations_of_a_phase_field_model_for_two_phase_fluids} proposed fully discrete finite element numerical schemes, which were the nonlinear and coupled system, and analyzed the error estimates. For the error analysis of the fully discrete schemes of the coupled models, Feng et al. \cite{2015_Diegel_Analysis_of_a_mixed_finite_element_method_for_a_Cahn_Hilliard_Darcy_Stokes_system} proposed the convex-splitting finite element method for the Cahn-Hilliard-Darcy-Stokes equations to demonstrate the unconditional unique solvability, unconditional energy stability, and optimal error estimation of the fully discrete finite element scheme in the three-dimensional case. Recently, optimal error estimation of the CHNS-type  systems in discrete scheme has received attention from researchers \cite{2017_Diegel_Convergence_analysis_and_error_estimates_for_a_second_order_accurate_finite_element_method_for_the_Cahn_Hilliard_Navier_Stokes_system,2023_CaiWentao_Optimal_L2_error_estimates_of_unconditionally_stable_finite_element_schemes_for_the_Cahn_Hilliard_Navier_Stokes_system,2024_ChenYaoyao_SAVFEM,2024_YiNianyu_Convergence_analysis_of_a_decoupled_pressure_correction_SAV_FEM_for_the_Cahn_Hilliard_Navier_Stokes_model,2022_WangCheng_A_positivity_preserving_energy_stable_finite_difference_scheme_for_the_Flory_Huggins_Cahn_Hilliard_Navier_Stokes_system,2024_ChenWenbin_Convergence_analysis_of_a_second_order_numerical_scheme_for_the_Flory_Huggins_Cahn_Hilliard_Navier_Stokes_system,2024_WangCheng_A_second_order_numerical_scheme_of_the_Cahn_Hilliard_Navier_Stokes_system_with_Flory_Huggins_potential,2024_Wangcheng_Convergence_analysis_of_a_temporally_second_order_accurate_finite_element_scheme_for_the_Cahn_Hilliard_magnetohydrodynamics_system_of_equations}. Diegel et al. \cite{2017_Diegel_Convergence_analysis_and_error_estimates_for_a_second_order_accurate_finite_element_method_for_the_Cahn_Hilliard_Navier_Stokes_system} extend the analysis to a second-order in time convex-splitting finite element scheme for the CHNS equations, obtaining the same spatial convergence results as those in \cite{2015_Diegel_Analysis_of_a_mixed_finite_element_method_for_a_Cahn_Hilliard_Darcy_Stokes_system}. Diegel et al. \cite{2015_Diegel_Analysis_of_a_mixed_finite_element_method_for_a_Cahn_Hilliard_Darcy_Stokes_system,2017_Diegel_Convergence_analysis_and_error_estimates_for_a_second_order_accurate_finite_element_method_for_the_Cahn_Hilliard_Navier_Stokes_system} both analyze the optimal error estimate for the fully discrete scheme of the CHNS equations, but not the optimal error estimates.
Cai et al. \cite{2023_CaiWentao_Optimal_L2_error_estimates_of_unconditionally_stable_finite_element_schemes_for_the_Cahn_Hilliard_Navier_Stokes_system} proposed the convex-splitting finite element methods, by proposing the two newly proposed quasi-projection operators and utilizing the superconvergence of its negative norms estimates, and in the $P_r\times P_r\times \mathbf{P}_{r+1}\times P_r$ (standard Taylor-Hood elements), where $r$ is the degree of the local polynomials used, and $P_1\times P_1\times \mathbf{P}_{1b}\times P_1$ (MINI elements) finite element spaces,  first proved the optimal $L^2$ error estimate for the phase-field variable, chemical potential, and pressure in the case of $r \geq 1$, and for the velocity when $r \geq 2$. Subsequently, Yang et al. \cite{2024_ChenYaoyao_SAVFEM} used SAV technique, Euler semi-implicit and finite element method to discretize time and space to obtain a linear and unconditional energy stabilization scheme, and analyzed the optimal error estimate in MINI finite element spaces.
Meanwhile, Yang et al.\cite{2024_YiNianyu_Convergence_analysis_of_a_decoupled_pressure_correction_SAV_FEM_for_the_Cahn_Hilliard_Navier_Stokes_model} proposed a step-by-step decoupled, linear and unconditionally energy-stabilized scheme by SAV technique and analyzed the optimal convergence of the standard $P_r\times P_r\times \mathbf{P}_r\times P_{r-1}~(r\geq 2)$ finite element spaces.

In this manuscript, we propose a first-order in time, linear, unconditional energy stabilization, fully discrete scheme.  For time discretization, we use the two scalar auxiliary variables (SAVs) and the pressure-correction projection method and $P_r \times P_r \times \mathbf{P}_{r+1} \times P_r$ finite element spaces in the spatial discretization. We proved the optimal $L^2$ error estimate for the phase-field variable, chemical potential, and pressure in the case of $r \geq 1$, and for the velocity when $r \geq 2$, however, we does not employ the quasi-projection operator proposed in \cite{2023_CaiWentao_Optimal_L2_error_estimates_of_unconditionally_stable_finite_element_schemes_for_the_Cahn_Hilliard_Navier_Stokes_system} and the superconvergence property of  its $H^{-1}$-norm estimates. To the best of our konwledge, there is still a lack of the literature on third-order accuracy for the optimal error estimation of velocity in the $P_1 \times P_1\times \mathbf{P}_2 \times P_1$ finite element spaces. Nevertheless, experimentally, we find that the velocity still achieves the best third-order accuracy in the $\mathbf{P}_2$ element.

This manuscript is organized as follows: we introduce some notations and preliminaries in the 
Section \ref{section_notations_preliminaries_Fully_discrete}, and propose the SAVs fully discrete scheme for the CHNS system, and prove the energy stability. We establish the error analysis of the fully discrete scheme in the Section \ref{section_error_analysis}. Numerical tests are conducted in Section \ref{section_numerical_test} to validate the theoretical results of  our scheme. 
\section{Numerical scheme}\label{section_notations_preliminaries_Fully_discrete}
\subsection{Notations and preliminaries}\label{section_notations_preliminaries}	
In this section we introduce some notations and preliminaries. We denote by $\left(\cdot,\cdot\right)$ the inner products of 
both $L^2(\Omega)$ and $\mathbf{L}^2(\Omega)$, and for all integer $k\geq 0$ and $1\leq p\leq \infty$, set $W^{k,p}(\Omega)$ to be the Sobolev space of functions defined on $\Omega$, and denote 
$$
H^k=W^{k,2}(\Omega),
~L^2(\Omega)=W^{0,p}(\Omega),
$$
and 
$$
L_0^p(\Omega)=\{v\in L^p(\Omega): \int_{\Omega}v~d\mathbf{x}=0\}.
$$
We denote by
$W_0^{k,p}$ the closure of $C_0^{\infty}(\Omega)$ in $W^{k,p}$ space, and
$H_0^k(\Omega)=W_0^{k,p}$ and the relevant vector-valued Sobolev space with the norms $\|\cdot\|_{W^{k,p}}$, 
$\|\cdot\|_{H^k}$, and $\|\cdot\|_{L^p}$ for both scalar- and vector-valued functions define by
$$
\mathbf{W}^{k,p}=[W^{k,p}(\Omega)]^d,~\mathbf{L}^p(\Omega)=[L^p(\Omega)]^d,~
\mathbf{H}_0^1(\Omega)=[H_0^1(\Omega)]^d.
$$
Thus, we denote $\|\cdot\|_k$ as the norms on $H^k$ and $\|\cdot\|$ is equivalent to the norm on $L^2$ space.
We will frequently use the following discrete Gr\"{o}nwall lemma
\cite{1990_Heywood_Finite_element_approximation_of_the_nonstationary_Navier_Stokes_problem_IV_Error_analysis_for_second_order_time_discretization}:
\begin{Lemma}[ \cite{1990_Heywood_Finite_element_approximation_of_the_nonstationary_Navier_Stokes_problem_IV_Error_analysis_for_second_order_time_discretization}]
	\label{lemma_discrete_Gronwall_inequation}
	For all $0 \leq n \leq m$, let $a_n, b_n, c_n, d_n, \tau, C \geq 0$ such that
	$$
		a_m+\tau\sum_{n=0}^{m}b_n\leq \tau\sum_{n=0}^{m}d_na_n+\tau\sum_{n=0}^{m}c_n+C,
	$$
	suppose that $\tau d_n<1$, for all $n$, and set $\sigma_n=(1-\tau d_n)^{-1}$. Then, it holds
	$$
		a_{m}+\tau\sum_{n=0}^{m} b_n \leq \exp \left(\tau\sum_{n=0}^m \sigma_nd_n\right)\left\{a_0+\left(b_0+c_0\right) \tau+\tau\sum_{n=1}^{m} c_n \right\} .
	$$
\end{Lemma}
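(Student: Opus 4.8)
The plan is to prove this by a standard discrete Grönwall argument: first absorb the \emph{implicit} endpoint term $\tau d_m a_m$ that sits on the right-hand side (this is exactly what forces the hypothesis $\tau d_n<1$ and motivates the factor $\sigma_n=(1-\tau d_n)^{-1}$), then reduce the estimate to an explicit scalar recursion and integrate it up, and finally fold the leftover $\sigma_m$-factor into the exponential by an elementary pointwise inequality.

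First I would introduce the monotone surrogate $y_n:=a_n+\tau\sum_{k=0}^n b_k$, so that $a_n\le y_n$ because $b_k\ge 0$, and set $F_m:=\tau\sum_{n=0}^m c_n+C$ for the forcing. Replacing $a_n$ by $y_n$ in the $d$-sum of the hypothesis and splitting off the $n=m$ term gives
\[
 (1-\tau d_m)\,y_m \;\le\; \tau\sum_{n=0}^{m-1} d_n\,y_n + F_m,
\]
and since $1-\tau d_m>0$ we may multiply by $\sigma_m$ to obtain $y_m\le \sigma_m\beta_m$ with $\beta_m:=\tau\sum_{n=0}^{m-1} d_n\,y_n+F_m$. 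This is the decisive step: the only place where the unknown endpoint value couples to itself has been resolved, at the cost of the prefactor $\sigma_m$.

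Next I would turn the bound into a one-step recursion for the partial sums $\beta_m$. Writing $\beta_{m+1}-\beta_m=\tau d_m\,y_m+\tau c_{m+1}$ and using $y_m\le\sigma_m\beta_m$ together with $1+\tau\sigma_m d_m\le\exp(\tau\sigma_m d_m)$ yields $\beta_{m+1}\le\exp(\tau\sigma_m d_m)\,\beta_m+\tau c_{m+1}$. Unrolling this linear recursion (a discrete integrating factor) and bounding each partial exponential weight by the full one, the source increments telescope and give $\beta_m\le\exp\bigl(\tau\sum_{n=0}^{m-1}\sigma_n d_n\bigr)F_m$.

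Finally I would return to $y_m\le\sigma_m\beta_m$. The remaining obstacle is the stray factor $\sigma_m$ in front: it is reabsorbed using the identity $\tau\sigma_m d_m=\sigma_m-1$ (immediate from the definition of $\sigma_m$) together with the elementary convexity inequality $t\le e^{t-1}$, applied at $t=\sigma_m\ge 1$, which reads $\sigma_m\le\exp(\tau\sigma_m d_m)$. Multiplying through then upgrades the exponent from $\sum_{n=0}^{m-1}$ to $\sum_{n=0}^{m}$, and recalling $a_n\le y_n$ delivers the asserted estimate, the bracketed right-hand side collecting the initial data $a_0+\tau b_0$ (controlled by the $m=0$ instance of the hypothesis) together with the accumulated sources $\tau\sum_{n=1}^m c_n$. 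I expect the main subtlety to be precisely this interplay between the implicit term and the monotone surrogate $y_n$: one must pull $\sigma_m$ onto the whole of $y_m$ rather than merely onto $a_m$, and then discharge it through the convexity inequality, instead of crudely bounding $d_n\le\sigma_n d_n$ inside the sum, which is too lossy to close the exponential.
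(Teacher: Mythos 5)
The paper does not actually prove this lemma --- it is imported verbatim from Heywood--Rannacher --- so there is no internal proof to compare against; your attempt has to be judged on its own. In substance your argument is exactly the classical proof from that reference: pass to the monotone quantity $y_n=a_n+\tau\sum_{k\le n}b_k$, absorb the implicit term $\tau d_m y_m$ using $\tau d_m<1$ to get $y_m\le\sigma_m\beta_m$, run the one-step recursion $\beta_{m+1}\le(1+\tau\sigma_m d_m)\beta_m+\tau c_{m+1}$, and discharge the leftover factor via $\sigma_m=1+\tau\sigma_m d_m\le\exp(\tau\sigma_m d_m)$. All of these steps are correct (with the caveat that you need the hypothesis to hold for every index $0,\dots,m$, not only the final one, in order to use $y_n\le\sigma_n\beta_n$ inside the recursion; that is the standard reading of the lemma). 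What your computation actually delivers is
\[
a_m+\tau\sum_{n=0}^m b_n\le\exp\Bigl(\tau\sum_{n=0}^m\sigma_n d_n\Bigr)\Bigl(\tau\sum_{n=0}^m c_n+C\Bigr),
\]
which is the conclusion as it appears in the cited source.

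The gap is confined to your final sentence, where you claim the bracket $a_0+(b_0+c_0)\tau+\tau\sum_{n=1}^m c_n$ ``collects'' the constant $F_m=C+\tau\sum_{n=0}^m c_n$ because $a_0+\tau b_0$ is ``controlled by the $m=0$ instance of the hypothesis.'' That control runs the wrong way: the $m=0$ inequality gives $a_0+\tau b_0\le\sigma_0(\tau c_0+C)$, i.e.\ it bounds $a_0+\tau b_0$ \emph{by} $C+\tau c_0$, which is useless for replacing $C+\tau c_0$ with $a_0+\tau(b_0+c_0)$ on the right-hand side. No repair is possible, because the conclusion as printed (with $C$ absent from the braces) is false: take $a_0=0$, $a_m=C/2$ for $m\ge1$, $b_n=c_n=d_n=0$, $C>0$; the hypothesis holds for every $m$, yet the asserted bound reads $C/2\le 0$. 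So you have the right proof of the right lemma, and the defect lies in the transcription of the statement, whose braces should contain $\tau\sum_{n=0}^m c_n+C$ (equivalently, $C$ must be retained); you should say so explicitly rather than force your estimate into the printed form.
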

 Throughout the manuscript we use $C$ or $C$, with or without subscript, to denote a positive constant
independent of discretization parameters that could have uncertain values in different places.

\subsection{Fully discretization scheme}\label{section_fully_discretization_scheme}
We introduce in this section the SAV technique and the fully discrete scheme of the first-order of time. 
Let $\gamma>0$ be a positive constant, $F(\phi)=G(\phi)-\frac{\gamma}{2}\phi^2$, $E_1(\phi)=\int_{\Omega}F(\phi)d\textbf{x}$ and $E_2(\mathbf{u})=\frac{1}{2}\|\mathbf{u}\|^2$, where the term $\frac{\gamma}{2}\phi^2$ is to simply the analysis \cite{2018_Shenjie_Xujie_CAEAFTSAVSYGF}. We introduce two auxiliary variables as follows:
\begin{align}
	r(t)&=\sqrt{E_1(\phi)+C_1},~~  C_1>\gamma,\\
	\rho(t)&=\sqrt{E_2(\mathbf{u})+C_2},~~ C_2>0.
\end{align}
Then
\begin{equation}
	2\rho\frac{d \rho}{d t}=\int_{\Omega} \frac{\partial \mathbf{u}}{\partial t} \cdot \mathbf{u}~d\textbf{x}=\int_{\Omega}\left(\frac{\partial \mathbf{u}}{\partial t}+2\mathbf{u} \cdot \nabla \mathbf{u}\right) \cdot \mathbf{u}~d\textbf{x}-\int_{\partial \Omega}(\mathbf{n} \cdot \mathbf{u})\cdot \frac{1}{2}|\mathbf{u}|^2ds,
\end{equation}
where $\mathbf{n}$ is the outward-pointing unit vector normal to the boundary $\partial \Omega$, and we have used integration by part, the equation \eqref{eq_boundary_initial_conditions_equations}, and the divergence theorem. It should be emphasized that both $q(t)$ and $\rho(t)$ are scalar variables, not field functions. At $t=0$,
\begin{equation}
	\rho(0)=\left(\frac{1}{2}\int_{\Omega} \left|\mathbf{u}\right|^2~d\textbf{x}+C_2\right)^{\frac{1}{2}},
\end{equation}
and reformulate the equation \eqref{eqCHNS01} as
\begin{subequations}\label{eq_1st_dis}
	\begin{align}
		\label{eq_0305a}
		\frac{\partial \phi}{\partial t}+\frac{r}{\sqrt{E_1(\phi)+C_1}}(\mathbf{u}\cdot \nabla)\phi-M\Delta \mu =0\quad \text{in}\quad \Omega\times (0,T],&\\
		\label{eq_0305b}
		\mu+\lambda \Delta \phi-\lambda\gamma\phi- \frac{\lambda r}{\sqrt{E_1(\phi)+C_1}}F'(\phi)=0\quad \text{in}\quad\Omega\times (0,T],&\\
		\label{eq_0305c}
		\frac{dr}{dt}-\frac{1}{2\sqrt{E_1(\phi)+C_1}}\int_{\Omega}F'(\phi)\frac{\partial \phi}{\partial t}d\mathbf{x}=0\quad \text{in}\quad\Omega \times (0,T],&\\
		\label{eq_0305d}
		\frac{\partial \mathbf{u}}{\partial t}+\frac{\rho}{\sqrt{E_2(\mathbf{u})+C_2}}\mathbf{u}\cdot\nabla \mathbf{u}-\nu\Delta \mathbf{u}+\nabla p-\frac{r}{\sqrt{E_1(\phi)+C_1}}\mu\nabla\phi=0\quad \text{in}\quad \Omega\times (0,T],&\\
		\label{eq_0305e}
		\nabla\cdot \mathbf{u}=0\quad \text{in}\quad \Omega\times (0,T],&\\
		\label{eq_0305f}
		2\rho\frac{\partial \rho}{\partial t}-\mathbf{u}\frac{\partial \mathbf{u}}{\partial t} -\frac{2\rho}{\sqrt{E_2(\mathbf{u})+C_2}}\int_{\Omega}\mathbf{u}\cdot\nabla\mathbf{u}\cdot\mathbf{u}~d\mathbf{x}=0\quad\text{in}\quad \Omega\times (0,T].
	\end{align}
\end{subequations}
Thus, we have the dissipation law:
\begin{equation}
	\frac{d\tilde{E}(\phi,\mathbf{u},r,\rho)}{dt}=-M\|\nabla\mu\|^2-\nu\|\nabla \mathbf{u}\|^2,
\end{equation}
where $\tilde{E}(\phi,\mathbf{u},r,\rho)=\int_{\Omega}\left(\frac{1}{4}|\mathbf{u}|^2+\frac{\lambda\gamma}{2}|\phi|^2+\frac{\lambda}{2}|\nabla\phi|^2\right)d\mathbf{x}+\frac{1}{2}\rho^2+\lambda r^2$.
\subsection{Discrete scheme}
We consider the fully discrete mixed finite element method based on the SAVs scheme for solving the CHNS model, 
for $n=0,1,2\cdots,N-1$, and 
denote 
$$
\tau=\frac{T}{N}, ~t^n=n\tau,~\delta_{\tau}g^{n+1}=\frac{g^{n+1}-g^n}{\tau},~
$$
where $N$ is the number of time uniform divided and $\tau$ is the time step.
We denote $\mathfrak{T}_h$ as a uniform partition of $\bar{\Omega}$ into triangles $\mathcal{T}_j$, $(j=1,2,\cdots,M)$, in 
$R^2$ with mesh size $h=\max_{1\leq j\leq M}\{\dim \mathcal{T}_j\}$. For any integer $r\geq 1$, we set the phase field-velocity-pressure finite element spaces
\begin{equation}
	\begin{aligned}
		&S_h^r=\{v_h\in C(\Omega):v_h|_{\mathcal{T}_j}\in P_r(\mathcal{T}_j),\forall \mathcal{T}_j\in \mathfrak{T}_h\},\\
		&\mathring{S}_h^r=S_h^r\cap L_0^2(\Omega),\\
		&\mathbf{X}_h^{r+1}=\{\mathbf{v}_h\in \mathbf{H}_0^1(\Omega)^d:\mathbf{v}_h|_{\mathcal{T}_j}\in \mathbf{P}_{r+1}(\mathcal{T}_j)^d,\forall \mathcal{T}_j\in \mathfrak{T}_h\},
	\end{aligned}
\end{equation}
where $P_r(\mathcal{T}_j)$ is the space of polynomials of degree $r$ on $\mathcal{T}_j$.
In the mixed finite element spaces, we use elements that satisfy the inf-sup condition \cite{1986_Girault_Vivette_Finite_element_methods_for_Navier_Stokes_equations,1984_Brezzi_FortinA_stable_finite_element_for_the_Stokes_equations}:
for all $q_h\in \mathring{S}_h^r$, there exists $\mathbf{v}_h\in \mathbf{X}_h^{r+1}$ and $\mathbf{v}_h\neq 0$ such that
\begin{equation}
	\|q_h\|\leq C\sup \frac{\left(q_h,\nabla\cdot\mathbf{v}_h\right)}{\|\nabla\mathbf{v}_h\|},
\end{equation}
where $C>0$ is a constant independent of $h$ and $\tau$.
 For the simplicity of notations, we denote 
 \begin{equation}
 	\mathcal{X}_h^r:=S_h^r\times S_h^r\times \mathbf{X}_h^{r+1}\times \mathring{S}_h^r.
 \end{equation}
 According to the above notations, we give the following fully discrete finite element SAVs scheme for the CHNS system \eqref{eq_0305a}-\eqref{eq_0305f},
 which is to find $\left(\phi_h^{n+1},\mu_h^{n+1},\mathbf{u}_h^{n+1},p_h^{n+1}\right)\in \mathcal{X}_h^{r}$ such that
 \begin{flalign}
 	\label{eq_fully_discrete_scheme_phi}
 	\left(\delta_{\tau}\phi_h^{n+1},w_h\right)+\frac{r_h^{n+1}}{\sqrt{E_{1h}^{n}}}\left(\mathbf{u}_h^n\cdot\nabla\phi_h^n,w_h\right)
 		+M\left(\nabla\mu_h^{n+1},\nabla w_h\right)&=0,\\
 	\label{eq_fully_discrete_scheme_mu}
 	\left(\mu_h^{n+1},\varphi_h\right)-\lambda\left(\nabla\phi_h^{n+1},\nabla\varphi_h\right)-\lambda\gamma\left(\phi_h^{n+1},\varphi_h\right)
 		-\frac{\lambda r_h^{n+1}}{\sqrt{E_{1h}^n}}\left(F'(\phi_h^n),\varphi_h\right)&=0,\\
 	\label{eq_fully_discrete_scheme_r}
 	\delta_{\tau}r_h^{n+1}-\frac{1}{2\sqrt{E_{1h}^n}}\left(\left(F'(\phi_h^n),\frac{\phi_h^{n+1}-\phi_h^n}{\tau}\right)
 	+\frac{1}{\lambda}\left(\mu_h^{n+1},\mathbf{u}_h^n\cdot\nabla\phi_h^n\right)-\frac{1}{\lambda}\left(\tilde{\mathbf{u}}_h^{n+1},\mu_h^n\nabla\phi_h^n\right)\right)&=0,\\
 	\label{eq_fully_discrete_scheme_u}
 	\left(\frac{\tilde{\mathbf{u}}_h^{n+1}-\mathbf{u}_h^n}{\tau},\mathbf{v}_h\right)
 		+\frac{\rho_h^{n+1}}{\sqrt{E_{2h}^{n}}}\left(\mathbf{u}_h^n\cdot\nabla\mathbf{u}_h^n,\mathbf{v}_h\right)+\nu\left(\nabla \tilde{\mathbf{u}}_h^{n+1},\nabla\mathbf{v}_h\right)+\left(\nabla p_h^n,\mathbf{v}_h\right)\notag&\\
 		-\frac{r_h^{n+1}}{\sqrt{E_{1h}^{n}}}\left(\mu_h^n\nabla\phi_h^n,\mathbf{v}_h\right)&=0,\\
 	\label{eq_fully_discrete_scheme_u_incompressible}
 	\left(\nabla\cdot \mathbf{u}_h^{n+1},q_h\right)&=0,\\
 	\label{eq_fully_discrete_scheme_u_p_correction}
 	\frac{\mathbf{u}_h^{n+1}-\tilde{\mathbf{u}}_h^{n+1}}{\tau}+\nabla\left(p_h^{n+1}-p_h^n\right)&=0,\\
 	\label{eq_fully_discrete_scheme_q}
 	2\rho_h^{n+1}\frac{\rho_h^{n+1}-\rho_h^n}{\tau}-\left(	\frac{\tilde{\mathbf{u}}_h^{n+1}-\mathbf{u}_h^n}{\tau}+\frac{2\rho_h^{n+1}}{\sqrt{E_{2h}^{n}}}\mathbf{u}_h^n\cdot\nabla\mathbf{u}_h^n,\tilde{\mathbf{u}}_h^{n+1}\right)&=0,
 \end{flalign}
where $E_{1h}^n=E_1(\phi_h^n)+C_1$ and $E_{2h}^n=E_1(\mathbf{u}_h^n)+C_2$, hold for all $\left(w_h,\varphi_h,\mathbf{v}_h,q_h\right)\in \mathcal{X}_h^r$, and $n=0,1,2,\cdots,N-1$. 
 Note that we added the terms $$\frac{1}{\lambda}\left(\left(\mu_h^{n+1},\mathbf{u}_h^n\cdot\nabla\phi_h^n\right)-\left(\tilde{\mathbf{u}}_h^{n+1},\mu_h^n\cdot\nabla\phi_h^n\right)\right)
 $$ in equation \eqref{eq_fully_discrete_scheme_r}, which is a first-order approximation to $\left(\mu,~\mathbf{u}\nabla\phi\right)-\left(\mathbf{u},\mu\nabla\phi\right)=0$.
 For continuous case \eqref{eq_0305d}-\eqref{eq_0305f}, we use the first-order pressure-correction scheme \cite{2006_Guermond_ShenJie_An_overview_of_projection_methods_for_incompressible_flows} discretization to obtain \eqref{eq_fully_discrete_scheme_u_incompressible}-\eqref{eq_fully_discrete_scheme_q}. 
 
 We note that the \eqref{eq_fully_discrete_scheme_q} is a nonlinear quadratic equation for $\rho_h^{n+1}$, and therefore, the existence of its solution needs to be given.  We rewrite it in the following form
 	\begin{equation}
 		2(\rho_h^{n+1})^2-2\left(\rho_h^{n}+\frac{\tau\left(\mathbf{u}_h^n\cdot\nabla\mathbf{u}_h^n,\tilde{\mathbf{u}}_h^{n+1}\right)}{\sqrt{E_{2h}^n}}\right)\rho_h^{n+1}-\left(\tilde{\mathbf{u}}_h^{n+1}-\mathbf{u}_h^n,\tilde{\mathbf{u}}_h^{n+1}\right)=0.
 	\end{equation}
 	The above equation can be simplified as 
 	\begin{equation}
 		ax^2+bx+c=0,
 	\end{equation}
 	where the coefficients are
 	\begin{equation}
 		a=2,~b=-2\left(\rho_h^{n}+\frac{\tau\left(\mathbf{u}_h^n\cdot\nabla\mathbf{u}_h^n,\tilde{\mathbf{u}}_h^{n+1}\right)}{\sqrt{E_{2h}^n}}\right),~c=-\left(\tilde{\mathbf{u}}_h^{n+1}-\mathbf{u}_h^n,\tilde{\mathbf{u}}_h^{n+1}\right).
 	\end{equation}
 	Note that $\rho_h^{n+1}$ is an approximation to $\sqrt{E_2(\mathbf{u}^{n+1})+C_2}$, the  $\frac{\rho_h^{n+1}}{\sqrt{E_2(\mathbf{u}_h^{n+1})+C_2}}$ should have a value close to $1$ if the solution accuracy is good enough. Thus formally we obtain $c\rightarrow 0$ and  $b^2-4ac\geq 0$ as $\tau\rightarrow 0$. we can obtain
 	\begin{equation}
 		\begin{aligned}
 			b^2-4ac=&~4\bigg|\left(\rho_h^{n}+\frac{\tau\left(\mathbf{u}_h^n\cdot\nabla\mathbf{u}_h^n,\tilde{\mathbf{u}}_h^{n+1}\right)}{\sqrt{E_{2h}^n}}\right)\bigg|^2+8\left(\tilde{\mathbf{u}}_h^{n+1}-\mathbf{u}_h^n,\tilde{\mathbf{u}}_h^{n+1}\right)\\
 			\geq&~~4\bigg|\left(\rho_h^{n}+\frac{\tau\left(\mathbf{u}_h^n\cdot\nabla\mathbf{u}_h^n,\tilde{\mathbf{u}}_h^{n+1}\right)}{\sqrt{E_{2h}^n}}\right)\bigg|^2\geq 0.
 		\end{aligned}
 	\end{equation}
The nonlinear quadratic equation \eqref{eq_fully_discrete_scheme_q} has two solutions. 
 Therefore, we need to give how to choose a more desirable root. A similar approach has been given in \cite{2020_lixiaoli_New_SAV_MAC_NS,2019_LinLianlei_Numerical_approximation_of_incompressible_Navier_Stokes_equations_based_on_an_auxiliary_energy_variable,2021_LiMinghui_New_efficient_time_stepping_schemes_for_the_Navier_Stokes_Cahn_Hilliard_equations} and we omit the detailed process here.
 Since the exact solution of $\frac{\rho_h^{n+1}}{\sqrt{E_2(\mathbf{u}_h^{n+1})+C_2}}$ is $1$, we choose the root $\rho_h^{n+1}$ such that
 $\frac{\rho_h^{n+1}}{\sqrt{E_2(\mathbf{u}_h^{n+1})+C_2}}$ is closer to $1$.\\
 Next, we define the classic Ritz projection \cite{1973_Wheeler_Mary_Fanett_A_priori_L2_error_estimates_for_Galerkin_approximations_to_parabolic_partial_differential_equations}:
 $R_h:~H^1(\Omega)\rightarrow S_h^r$, for all $\varphi_h\in S_h^r$ with $\int_{\Omega}\left(\psi-R_h\psi\right)dx=0$,
 \begin{equation}
 	\left(\nabla\left(\psi-R_h\psi\right),\nabla\varphi_h\right)=0.
 \end{equation}
 According to the finite element theory \cite{2008_Brenner_Susanne_C_The_mathematical_theory_of_finite_element_methods},
 it holds that
 \begin{flalign}
 	\label{eq_boundness_Ritz_0001}
 	&\|\psi-R_h\psi\|+h\|\psi-R_h\psi\|_{H^{1}}\leq Ch^{r+1}\|\psi\|_{H^{r+1}},\\
 	\label{eq_boundness_Ritz_0002}
 	&\|\psi-R_h\psi\|_{H^{-1}} \leq C\mathcal{E}_h\|\psi\|_{H^{r+1}},\\
 	\label{eq_boundness_Ritz_0003}
 	&\|\delta_{\tau}\left(\psi^n-R_h\psi^n\right)\|+h\|\delta_{\tau}\left(\psi^n-R_h\psi^n\right)\|_{H^1}\leq Ch^{r+1}\|\delta_{\tau}\psi^n\|_{H^{r+1}},\\
 	\label{eq_boundness_Ritz_0004}
 	&\|\delta_{\tau}\left(\psi-R_h\psi\right)\|_{H^{-1}} \leq C\mathcal{E}_h\|\delta_{\tau}\psi\|_{H^{r+1}},
 \end{flalign}
 where $\mathcal{E}_h$ is defined by
 \begin{equation}
 	\label{eq_E_h}
 	\mathcal{E}_h=\begin{cases}
 		h^{r+1},\qquad for ~r=1,\\
 		h^{r+2},\qquad for ~r\geq 2.
 	\end{cases}
 \end{equation}
 We denote the $L^2$ projection operators $I_h:L^2(\Omega)\rightarrow S_h^r$ and $\mathbf{I}_h:\mathbf{L}^2(\Omega)\rightarrow \mathbf{X}_h^{r+1}$ 
 such that
 \begin{flalign}
 	\left(v-I_hv,w_h\right)=&~0, ~\forall w_h\in S_h^r,\\
 	\left(\mathbf{v}-\mathbf{I}_h\mathbf{v},\mathbf{w}_h\right)=&~0,~\forall \mathbf{w}_h\in \mathbf{X}_h^{r+1}.
 \end{flalign}
 It is well-known that the $L^2$ projection satisfies the following estimates:
 \begin{flalign}
 	\label{eq_L2_projection_boundness001}
 	\|v-I_hv\|+h\|\nabla\left(v-I_hv\right)\|\leq Ch^{r+1}\|v\|_{H^{r+1}},\\
 	\label{eq_L2_projection_boundness002}
 	\|\mathbf{v}-\mathbf{I}_h\mathbf{v}\|+h\|\nabla\left(\mathbf{v}-\mathbf{I}_h\mathbf{v}\right)\|\leq Ch^{r+2}\|\mathbf{v}\|_{H^{r+2}}.
 \end{flalign}
 For $\left(\mathbf{v}_h,q_h\right)\in \mathbf{X}_h^{r+1}\times \mathring{S}_h^r$, we recall the Stokes projection
 $\left(\mathbf{P}_h,P_h\right):\mathbf{H}_0^1(\Omega)\times L_0^2(\Omega)\rightarrow\mathbf{X}_h^{r+1}\times \mathring{S}_h^r$,
 \begin{flalign}
 	\label{eq_Stokes_projection_0001}
 	\left(\nabla\left(\mathbf{u}-\mathbf{P}_h(\mathbf{u},p)\right),\nabla\mathbf{v}_h\right)-\left(p-P_h(\mathbf{u},p),\nabla\cdot\mathbf{v}_h\right)=0,\\
 	\label{eq_Stokes_projection_0002}
 	\left(\nabla\cdot\left(\mathbf{u}-\mathbf{P}_h(\mathbf{u},p)\right),q_h\right)=0.
 \end{flalign}
 For brevity, we denote $\mathbf{P}_h(\mathbf{u},p)=\mathbf{P}_h\mathbf{u}$ and $P_h(\mathbf{u},p)=P_hp$.
 \begin{Lemma}[\cite{2015_HeYinnian_Unconditional_convergence_of_the_Euler_semi_implicit_scheme_for_the_three_dimensional_incompressible_MHD_equations}]\label{lemma_Stokes_projection_bound}
 	Suppose that $\left(\mathbf{u},p\right)\in \mathbf{H}^{r+1}(\Omega)\times H^r(\Omega)$, it holds that
 	\begin{flalign}
 		\label{eq_Stokes_operators_boundness_u}
 		\|\mathbf{u}-\mathbf{P}_h\mathbf{u}\|+h\|\nabla\left(\mathbf{u}-\mathbf{P}_h\mathbf{u}\right)\|\leq Ch^{r+2}\left(\|\mathbf{u}\|_{H^{r+2}}+\|p\|_{H^{r+1}}\right),\\
 		\label{eq_Stokes_operators_boundness_p}
 		\|\nabla\left(\mathbf{u}-\mathbf{P}_h\mathbf{u}\right)\|+\|p-P_hp\|\leq Ch^{r+1}\left(\|\mathbf{u}\|_{H^{r+2}}+\|p\|_{H^{r+1}}\right),
 	\end{flalign}
 	and for all $n=1,2,\cdots,N,$
 	\begin{equation}
 		\|\delta_{\tau}\left(\mathbf{u}^n-\mathbf{P}_h\mathbf{u}^n\right)\|\leq Ch^{r+2}\left(\|\delta_{\tau}\mathbf{u}^n\|_{H^{r+2}}+\|\delta_{\tau}p^n\|_{H^{r+1}}\right),
 	\end{equation}
 	where $C$ is positive constant that  does not depend on $\tau$ and $h$.
 \end{Lemma}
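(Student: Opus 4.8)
The plan is to recognize that the pair $(\mathbf{P}_h\mathbf{u}, P_hp)$ defined by \eqref{eq_Stokes_projection_0001}--\eqref{eq_Stokes_projection_0002} is precisely the mixed finite element (Galerkin) approximation, in the Taylor--Hood pair $\mathbf{X}_h^{r+1}\times\mathring{S}_h^r$, of the stationary Stokes problem whose exact solution is $(\mathbf{u},p)$. Consequently the stated bounds are the classical a priori estimates for the Stokes projection, and I would establish them in three steps: first the energy-norm estimate \eqref{eq_Stokes_operators_boundness_p}, then the $L^2$ velocity estimate \eqref{eq_Stokes_operators_boundness_u} by duality, and finally the discrete time-difference estimate by linearity.

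First I would prove the energy-norm bound. The bilinear form $(\nabla\mathbf{u},\nabla\mathbf{v})$ is coercive on $\mathbf{H}_0^1(\Omega)$ by Poincar\'e's inequality, and the inf-sup condition stated above guarantees the discrete LBB stability of the pair $\mathbf{X}_h^{r+1}\times\mathring{S}_h^r$. By the standard saddle-point quasi-optimality (C\'ea-type) estimate one obtains
\[
\|\nabla(\mathbf{u}-\mathbf{P}_h\mathbf{u})\|+\|p-P_hp\|\leq C\inf_{(\mathbf{v}_h,q_h)}\left(\|\nabla(\mathbf{u}-\mathbf{v}_h)\|+\|p-q_h\|\right).
\]
Choosing $\mathbf{v}_h$ and $q_h$ to be suitable interpolants of $\mathbf{u}$ and $p$ in $\mathbf{X}_h^{r+1}$ ($\mathbf{P}_{r+1}$) and $\mathring{S}_h^r$ ($P_r$), and invoking the associated interpolation error estimates, yields the $O(h^{r+1})$ bound \eqref{eq_Stokes_operators_boundness_p}, since $\mathbf{u}\in\mathbf{H}^{r+2}$ gives $O(h^{r+1})$ in $H^1$ and $p\in H^{r+1}$ gives $O(h^{r+1})$ in $L^2$.

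Second, the improved $O(h^{r+2})$ $L^2$ velocity estimate \eqref{eq_Stokes_operators_boundness_u} I would obtain by an Aubin--Nitsche duality argument. Setting $\mathbf{e}=\mathbf{u}-\mathbf{P}_h\mathbf{u}$, consider the dual Stokes problem $-\Delta\mathbf{w}+\nabla\xi=\mathbf{e}$, $\nabla\cdot\mathbf{w}=0$ in $\Omega$, $\mathbf{w}=0$ on $\partial\Omega$, which on the convex (or smooth) domain $\Omega$ satisfies the elliptic regularity bound $\|\mathbf{w}\|_{H^2}+\|\xi\|_{H^1}\leq C\|\mathbf{e}\|$. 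Testing the dual equation with $\mathbf{e}$, integrating by parts, and inserting the finite element interpolants of $(\mathbf{w},\xi)$ so as to exploit the two defining orthogonalities \eqref{eq_Stokes_projection_0001}--\eqref{eq_Stokes_projection_0002}, one bounds $\|\mathbf{e}\|^2$ by the product of the energy-norm error and $h$ times the dual regularity; combining this with the Step-one estimate produces the extra factor of $h$ and hence the $O(h^{r+2})$ rate.

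Finally, the discrete-in-time estimate follows from linearity. Since the Stokes projection is linear in its data $(\mathbf{u},p)$ and the finite element spaces are independent of $n$, the difference operator $\delta_\tau$ commutes with the projection, giving $\delta_\tau(\mathbf{u}^n-\mathbf{P}_h\mathbf{u}^n)=\delta_\tau\mathbf{u}^n-\mathbf{P}_h(\delta_\tau\mathbf{u}^n)$ with data $(\delta_\tau\mathbf{u}^n,\delta_\tau p^n)$; applying the Step-two $L^2$ estimate to this data yields the claim. The main obstacle is the duality step: it requires the $H^2\times H^1$ elliptic regularity of the Stokes operator on $\Omega$ and a careful treatment of the pressure coupling together with the two orthogonality relations, so that both the velocity-gradient and the divergence/pressure contributions are correctly absorbed. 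The energy estimate and the linearity argument are then routine.
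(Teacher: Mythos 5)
The paper does not prove this lemma at all: it is quoted verbatim from the cited reference on the Euler semi-implicit scheme for MHD, so there is no in-paper argument to compare against. Your sketch is the standard proof of that classical result and is essentially correct: LBB stability plus C\'ea-type quasi-optimality for the saddle-point system gives the $O(h^{r+1})$ energy/pressure bound, an Aubin--Nitsche duality argument with the dual Stokes problem upgrades the velocity to $O(h^{r+2})$ in $L^2$, and the $\delta_\tau$ estimate follows because the projection is linear and the spaces are time-independent, so $\delta_\tau$ commutes with $(\mathbf{P}_h,P_h)$.

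Two small points worth being explicit about. First, the duality step genuinely requires $\mathbf{H}^2\times H^1$ regularity of the stationary Stokes problem on $\Omega$, i.e.\ a convex polygon or smooth boundary; you flag this, and the paper leaves it implicit. Second, the lemma as printed hypothesizes $(\mathbf{u},p)\in\mathbf{H}^{r+1}\times H^r$ while the right-hand sides of \eqref{eq_Stokes_operators_boundness_u}--\eqref{eq_Stokes_operators_boundness_p} involve $\|\mathbf{u}\|_{H^{r+2}}$ and $\|p\|_{H^{r+1}}$; your derivation correctly identifies the regularity actually needed for the stated rates, which is the higher one. Neither point is a gap in your argument.
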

 \begin{Lemma}[\cite{2019_YangXiaofeng_Convergence_analysis_of_an_unconditionally_energy_stable_projection_scheme_for_magneto_hydrodynamic_equations,2024_YiNianyu_Convergence_analysis_of_a_decoupled_pressure_correction_SAV_FEM_for_the_Cahn_Hilliard_Navier_Stokes_model}]
 	\label{lemma_Stokes_projection_Qboundedness}For all $\mathbf{u}\in L^{\infty}(0,T;\mathbf{H}^{r+1}(\Omega))$ and $p\in L^{\infty}(0,T;H^{r}(\Omega))$,
 	the Stokes projection $P_hp$ is $H^1$ stable in the sense that
 	\begin{equation}
 		\|P_hp\|_1\leq C\left(\|\mathbf{u}\|_2+\|p\|_1\right),
 	\end{equation}
 	where $C$ is positive constant that  does not depend on $\tau$ and $h$.
 \end{Lemma}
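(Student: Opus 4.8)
The plan is to establish the $H^1$ stability of the discrete pressure $P_h p$ by comparing it against a projection of $p$ that is simultaneously $H^1$ stable and first-order accurate, and then to absorb the remaining discrete error with an inverse inequality. Since $p \in L_0^2(\Omega)$ and constants lie in $S_h^r$, the $L^2$ projection $I_h p$ satisfies $\int_\Omega (p - I_h p)\,d\mathbf{x} = 0$, so $I_h p \in \mathring{S}_h^r$; moreover, on the quasi-uniform mesh $\mathfrak{T}_h$ the $L^2$ projection is $H^1$ stable, giving $\|I_h p\|_1 \leq C\|p\|_1$. Decomposing $P_h p = (P_h p - I_h p) + I_h p$, it then remains only to bound $\|P_h p - I_h p\|_1$.

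Because $P_h p - I_h p \in \mathring{S}_h^r$ is a finite element function, I would apply the inverse inequality $\|v_h\|_1 \leq C h^{-1}\|v_h\|$ to reduce the task to estimating $\|P_h p - I_h p\|$ in $L^2$, and for the latter I would use the discrete inf-sup condition on $\mathbf{X}_h^{r+1} \times \mathring{S}_h^r$. Writing $(P_h p - I_h p, \nabla\cdot\mathbf{v}_h) = (P_h p - p, \nabla\cdot\mathbf{v}_h) + (p - I_h p, \nabla\cdot\mathbf{v}_h)$ and using the momentum relation \eqref{eq_Stokes_projection_0001} to rewrite the first term as $-(\nabla(\mathbf{u} - \mathbf{P}_h\mathbf{u}), \nabla\mathbf{v}_h)$, the inf-sup inequality yields $\|P_h p - I_h p\| \leq C(\|\nabla(\mathbf{u} - \mathbf{P}_h\mathbf{u})\| + \|p - I_h p\|)$. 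The approximation term is controlled by the first-order $L^2$ estimate $\|p - I_h p\| \leq C h\|p\|_1$, valid for $p \in H^1$.

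The estimate that carries the argument, and the step I expect to be the main obstacle, is the lowest-order bound $\|\nabla(\mathbf{u} - \mathbf{P}_h\mathbf{u})\| \leq C h(\|\mathbf{u}\|_2 + \|p\|_1)$ for the velocity-gradient error of the Stokes projection under the modest regularity $\mathbf{u} \in \mathbf{H}^2(\Omega)$, $p \in H^1(\Omega)$; this is the standard first-order Stokes finite element error estimate on the stable pair $\mathbf{X}_h^{r+1} \times \mathring{S}_h^r$ (with $r \geq 1$) and follows by a C\'ea-type argument from the inf-sup condition and approximation theory, analogous to Lemma \ref{lemma_Stokes_projection_bound} specialized to first order. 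Granting it, the two $O(h)$ contributions give $\|P_h p - I_h p\| \leq C h(\|\mathbf{u}\|_2 + \|p\|_1)$, so the $h^{-1}$ from the inverse inequality cancels exactly and $\|P_h p - I_h p\|_1 \leq C(\|\mathbf{u}\|_2 + \|p\|_1)$. Adding $\|I_h p\|_1 \leq C\|p\|_1$ completes the proof. The delicate point is precisely this lossless use of the inverse inequality: it is legitimate only because the discrete remainder converges at exactly first order, which forces the choice of a projection $I_h$ that is at once $H^1$ stable and first-order accurate in $L^2$, and keeps the regularity demands at $\mathbf{u} \in \mathbf{H}^2$, $p \in H^1$.
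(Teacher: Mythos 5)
Your proposal is correct. The paper does not actually prove this lemma --- it is imported by citation from the two referenced works --- so there is no in-paper argument to compare against; your route (decompose $P_hp=(P_hp-I_hp)+I_hp$, use $H^1$ stability of the $L^2$ projection on the quasi-uniform mesh, then control $\|P_hp-I_hp\|_1$ by the inverse inequality combined with the inf-sup condition, the defining relation \eqref{eq_Stokes_projection_0001}, and the first-order accuracy of both $I_h$ and the Stokes projection) is the standard duality-free argument used in the cited literature, and every step checks out: $I_hp\in\mathring{S}_h^r$ because constants belong to $S_h^r$ and $p$ has zero mean, and the $h^{-1}$ loss is exactly cancelled by the $O(h)$ discrete remainder. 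The only point worth flagging is the one you already identified: you need the lowest-order Stokes estimate $\|\nabla(\mathbf{u}-\mathbf{P}_h\mathbf{u})\|\leq Ch\left(\|\mathbf{u}\|_2+\|p\|_1\right)$ under the reduced regularity $\mathbf{u}\in\mathbf{H}^2$, $p\in H^1$, which is not literally the statement of Lemma \ref{lemma_Stokes_projection_bound} (that version assumes $\mathbf{u}\in\mathbf{H}^{r+2}$, $p\in H^{r+1}$) but does follow from the same C\'ea/inf-sup machinery with the approximation order capped by the regularity; stating that variant explicitly would make the proof fully self-contained.
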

 Then, giving the definition of the discrete Laplace operator $\Delta_h$ and the discrete Stokes operator $A_h=-\mathbf{P}_h\Delta_h$ \cite{1995_Temam_Roger_Navier_Stokes_equations_and_nonlinear_functional_analysis,1995_SheJie_On_error_estimates_of_the_penalty_method_for_unsteady_Navier_Stokes_equations}, it holds that
 \begin{flalign}
 	\label{eq_discrete_Laplace_operator_0001}
 	\left(\nabla v, \nabla\Delta_h^{-1} w_h\right)=-\left(v,w_h\right),~\forall v,w_h\in \mathring{S}_h^{r},\\
 	\label{eq_discrete_Laplace_operator_0002}
 	\left(-\Delta_hw_h,v\right)=\left(\nabla w_h,\nabla v\right),~\forall v,w_h\in \mathring{S}_h^{r},
 \end{flalign}
 where $\|\nabla\Delta_h^{-1} w_h\|=\|\Delta_h^{-1/2}w_h\|$.
 Thus, for all $\mathbf{v}_h\in \mathring{S}_h^r$, it is well-known that the discrete norms are defined by
 \begin{flalign}
 	\label{eq_discrete_laplace_operator_norms_differences_0001}
 	&\|\mathbf{v}_h\|_{H^2}=\|A_h\mathbf{v}_h\|,~\|\mathbf{v}_h\|_{H^{-1}}=\|A_h^{-1/2}\mathbf{v}_h\|, \\
 	\label{eq_discrete_laplace_operator_norms_differences_0002}
 	&\|A_h^{-1/2}\mathbf{v}_h\|=\|\nabla\mathbf{v}_h\|,~\|\nabla A_h^{-1/2}\mathbf{v}_h\|=\|\mathbf{v}_h\|.
 \end{flalign}
 \begin{Lemma}[\cite{2020_Chen_Hongtao_Optimal_error_estimates_for_the_scalar_auxiliary_variable_finite_element_schemes_for_gradient_flows,2024_YiNianyu_Convergence_analysis_of_a_decoupled_pressure_correction_SAV_FEM_for_the_Cahn_Hilliard_Navier_Stokes_model}]\label{lemma_Laplace_opreator}
	For all $\phi_h\in S_h^r$, it holds that, for the opreator $\Delta_h$, 	
	\begin{flalign}
		\|\Delta_h\phi_h\|_{-2}\leq C\|\phi_h\|,\\
		\|\Delta_hv\|_{-1}\leq C\|\nabla v\|.
	\end{flalign}
 \end{Lemma}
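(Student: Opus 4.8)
The plan is to prove both bounds by duality, characterizing the negative norms as
$$
\|w\|_{-1}=\sup_{\chi}\frac{(w,\chi)}{\|\chi\|_{1}},\qquad \|w\|_{-2}=\sup_{\psi}\frac{(w,\psi)}{\|\psi\|_{2}},
$$
with the suprema taken over admissible mean-zero test functions, and then to convert each continuous pairing into a discrete one. The observation that makes this possible is that both $\Delta_h v$ and $\Delta_h\phi_h$ lie in $\mathring{S}_h^r\subset L_0^2(\Omega)$; since $I_h$ is the $L^2$ projection, one has $(\Delta_h v,\chi)=(\Delta_h v,I_h\chi)$ for any test function, so it suffices to test against $I_h\chi\in S_h^r$, and because $\Delta_h v$ is mean-zero only the mean-zero part of $I_h\chi$ contributes. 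I will also use that $\Delta_h$ is self-adjoint on $\mathring{S}_h^r$, which is immediate from \eqref{eq_discrete_Laplace_operator_0002} since its right-hand side $(\nabla w_h,\nabla v)$ is symmetric in its arguments. Throughout I restrict to $v,\phi_h\in\mathring{S}_h^r$, on which $\Delta_h$ is defined.

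For the second estimate I would argue directly. Fixing a mean-zero $\chi$ and writing $\chi_h=I_h\chi$, the defining relation \eqref{eq_discrete_Laplace_operator_0002} gives $(\Delta_h v,\chi_h)=-(\nabla v,\nabla\chi_h)$, and therefore
$$
|(\Delta_h v,\chi)|=|(\nabla v,\nabla I_h\chi)|\le\|\nabla v\|\,\|\nabla I_h\chi\|\le C\,\|\nabla v\|\,\|\chi\|_{1},
$$
where the final step is the $H^1$-stability of the $L^2$ projection on the uniform mesh $\mathfrak{T}_h$. Taking the supremum over $\chi$ yields $\|\Delta_h v\|_{-1}\le C\|\nabla v\|$.

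For the first estimate I would instead move the discrete Laplacian onto the test function. Using self-adjointness, $(\Delta_h\phi_h,\psi)=(\Delta_h\phi_h,I_h\psi)=(\phi_h,\Delta_h I_h\psi)\le\|\phi_h\|\,\|\Delta_h I_h\psi\|$, so everything reduces to the discrete $H^2$-type bound $\|\Delta_h I_h\psi\|\le C\|\psi\|_{2}$. This is the main obstacle, and it is where the mesh quasi-uniformity must be used. To establish it, I would test $\Delta_h I_h\psi$ against an arbitrary $w_h\in\mathring{S}_h^r$, apply \eqref{eq_discrete_Laplace_operator_0002} to get $(\Delta_h I_h\psi,w_h)=-(\nabla I_h\psi,\nabla w_h)$, and split
$$
-(\nabla I_h\psi,\nabla w_h)=-(\nabla\psi,\nabla w_h)-(\nabla(I_h\psi-\psi),\nabla w_h)=(\Delta\psi,w_h)-(\nabla(I_h\psi-\psi),\nabla w_h),
$$
the last equality being integration by parts together with the Neumann condition $\partial\psi/\partial\mathbf{n}=0$ carried by the test space, so that the boundary term drops. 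The first term is controlled by $\|\Delta\psi\|\,\|w_h\|\le C\|\psi\|_{2}\|w_h\|$. For the second term I would combine the approximation estimate $\|\nabla(\psi-I_h\psi)\|\le Ch\|\psi\|_{2}$ (the $s=2$ case of \eqref{eq_L2_projection_boundness001}, valid since $r\ge1$) with the inverse inequality $\|\nabla w_h\|\le Ch^{-1}\|w_h\|$ on the uniform mesh, so that it too is bounded by $C\|\psi\|_{2}\|w_h\|$. Choosing $w_h=\Delta_h I_h\psi$ and cancelling one factor gives $\|\Delta_h I_h\psi\|\le C\|\psi\|_{2}$, whence $\|\Delta_h\phi_h\|_{-2}\le C\|\phi_h\|$. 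The delicate bookkeeping is the interplay between the mean-zero constraint and the projection $I_h$: because $\Delta_h\phi_h\in L_0^2(\Omega)$ only mean-zero test functions are relevant, and since $I_h$ preserves means this keeps both arguments in $\mathring{S}_h^r$ so that self-adjointness and the stability and approximation constants all remain clean.
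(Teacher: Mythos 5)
The paper does not actually prove this lemma --- it is imported by citation from the references attached to its statement --- so there is no in-paper argument to compare against; your proposal supplies a proof where the authors supply only a reference. Your duality argument is correct and is the standard one for this kind of statement. The $H^{-1}$ bound is clean: reducing the pairing to $I_h\chi$ via $L^2$-orthogonality, invoking the defining relation \eqref{eq_discrete_Laplace_operator_0002}, and closing with the $H^1$-stability of $I_h$ on the uniform mesh is exactly right, and your mean-zero bookkeeping (that $I_h$ preserves means, so both arguments stay in $\mathring{S}_h^r$) is handled correctly. The $H^{-2}$ bound via self-adjointness of $\Delta_h$ and the discrete elliptic estimate $\|\Delta_h I_h\psi\|\le C\|\psi\|_{2}$, obtained from the approximation property plus the inverse inequality, is also sound. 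Two caveats are worth recording. First, your argument hinges on the boundary term $\int_{\partial\Omega}(\partial\psi/\partial\mathbf{n})\,w_h\,ds$ vanishing, which requires $\|\cdot\|_{-2}$ to be the dual norm over the Neumann-constrained space $\{\psi\in H^2(\Omega):\partial\psi/\partial\mathbf{n}=0\text{ on }\partial\Omega\}$; the paper never defines $\|\cdot\|_{-2}$, so this is an assumption you are importing, and it cannot be dispensed with --- estimating that boundary term by a trace inequality and the inverse estimate would cost an uncontrolled factor of $h^{-1/2}$. Second, in the cited sources these negative norms are frequently the \emph{discrete} ones, $\|w_h\|_{-1}=\|\nabla\Delta_h^{-1}w_h\|$ and $\|w_h\|_{-2}=\|\Delta_h^{-1}w_h\|$, in which case both inequalities are immediate identities ($\|\Delta_h v\|_{-1}=\|\nabla v\|$ and $\|\Delta_h\phi_h\|_{-2}=\|\phi_h\|$) requiring no duality, stability, or inverse estimates at all; your proof buys the stronger statement in the continuous dual norms at the price of mesh quasi-uniformity.
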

 We next give the following inequalities \cite{2007_HeYinnian_SunWeiwei_Stability_and_convergence_of_the_Crank_Nicolson_Adams_Bashforth_scheme_for_the_time_dependent_Navier_Stokes_equations},
 \begin{flalign}
 	\label{eq_boundness_basic_inequalities_0001}
 	&\|v\|_{L^l}\leq C\|\nabla v\|,~ (2\leq l\leq 6),~\|v\|_{L^4}\leq C\|v\|^{1/2}\|\nabla v\|^{1/2}, \quad \forall v\in H_0^1(\Omega),\\
 	\label{eq_boundness_basic_inequalities_0002}
 	&\|v\|_{L^{\infty}}\leq C\|v\|^{1/2}\|\Delta v\|^{1/2},~\|v\|\leq C\|\Delta v\|, \quad \forall v\in H^2(\Omega)\cap H_0^1(\Omega),\\
 	\label{eq_boundness_basic_inequalities_0003}
 	&\|\phi\|_{L^l}\leq C\|\phi\|_{H^1},~ (2\leq l\leq 6), ~\forall \phi\in H^1(\Omega),\\
 	\label{eq_boundness_basic_inequalities_0004}
 	&\|\phi\|_{L^{\infty}}\leq C\|\phi\|^{1/2}\left(\|\phi\|^2+\|\Delta\phi\|^2\right)^{1/4}, ~\forall \phi\in H^2(\Omega),~ \frac{\partial \phi}{\partial n}=0 ~ \text{on}~ \partial\Omega,
 \end{flalign}
 where $C$ is positive constant that depends on $\Omega$. 
 \subsection{ Energy stability}
 Next, we prove that the above scheme \eqref{eq_fully_discrete_scheme_phi}-\eqref{eq_fully_discrete_scheme_q} is unconditionally energy stable.
 \begin{Theorem}
 	The fully discrete scheme \eqref{eq_fully_discrete_scheme_phi}-\eqref{eq_fully_discrete_scheme_q} satisfies the unconditional stabilization of the modified energy in the sense that
 		\begin{equation}\label{eq_energy_stable_inequation0314}
 			\begin{aligned}
 				\tilde{E}(\phi_h^{n+1},\mathbf{u}_h^{n+1},r_h^{n+1},\rho_h^{n+1})&-\tilde{E}(\phi_h^n,\mathbf{u}_h^n,r_h^n,\rho_h^n)\leq-2M\tau\|\nabla\mu_h^{n+1}\|^2-2\tau\nu\|\nabla\tilde{\mathbf{u}}_h^{n+1}\|^2,
 			\end{aligned}
 	\end{equation}
 	where
 	\begin{equation}
 		\begin{aligned}
 			\tilde{E}(\phi_h^{n+1},\mathbf{u}_h^{n+1},r_h^{n+1},\rho_h^{n+1})=&~\lambda\|\nabla\phi_h^{n+1}\|^2+\lambda\gamma\|\phi_h^{n+1}\|^2+2\lambda|r_h^{n+1}|^2+\frac{1}{2}\|\mathbf{u}_h^{n+1}\|^2\\
 			&+	\tau^2\|\nabla p_h^{n+1}\|^2+|\rho_h^{n+1}|^2.
 		\end{aligned}
 	\end{equation}
 \end{Theorem}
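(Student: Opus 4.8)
The plan is to run the standard scalar-auxiliary-variable energy argument: test each equation of the scheme against a carefully scaled multiplier so that the resulting terms assemble into $\tilde E(\phi_h^{n+1},\mathbf u_h^{n+1},r_h^{n+1},\rho_h^{n+1})$, while the auxiliary-variable coupling terms cancel in pairs. Concretely, I would test \eqref{eq_fully_discrete_scheme_phi} with $2\mu_h^{n+1}$, test \eqref{eq_fully_discrete_scheme_mu} with $-2\delta_\tau\phi_h^{n+1}$, multiply \eqref{eq_fully_discrete_scheme_r} by $4\lambda r_h^{n+1}$, test \eqref{eq_fully_discrete_scheme_u} with $2\tilde{\mathbf u}_h^{n+1}$, and add \eqref{eq_fully_discrete_scheme_q}. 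The common factor $2$ (and the choice $4\lambda r_h^{n+1}$) is dictated by the requirement that the SAV coupling terms cancel \emph{exactly}: unconditional stability forbids absorbing them by Young's inequality, so the multipliers are not free.

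Summing the $\phi$- and $\mu$-tests makes the $(\delta_\tau\phi_h^{n+1},\mu_h^{n+1})$ contributions cancel, and $4\lambda r_h^{n+1}\delta_\tau r_h^{n+1}=\tfrac{2\lambda}{\tau}\bigl(|r_h^{n+1}|^2-|r_h^n|^2+|r_h^{n+1}-r_h^n|^2\bigr)$ produces the $2\lambda|r_h^{n+1}|^2$ term. The three convective/coupling terms then annihilate: the term $\tfrac{r_h^{n+1}}{\sqrt{E_{1h}^n}}(\mathbf u_h^n\cdot\nabla\phi_h^n,\mu_h^{n+1})$ from \eqref{eq_fully_discrete_scheme_phi} is killed by the second bracketed term of \eqref{eq_fully_discrete_scheme_r}; the force $\tfrac{r_h^{n+1}}{\sqrt{E_{1h}^n}}(\mu_h^n\nabla\phi_h^n,\tilde{\mathbf u}_h^{n+1})$ from \eqref{eq_fully_discrete_scheme_u} is killed by the third; and the Navier--Stokes convection $\tfrac{\rho_h^{n+1}}{\sqrt{E_{2h}^n}}(\mathbf u_h^n\cdot\nabla\mathbf u_h^n,\tilde{\mathbf u}_h^{n+1})$ from \eqref{eq_fully_discrete_scheme_u} cancels against \eqref{eq_fully_discrete_scheme_q}. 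Applying $2(a,a-b)=\|a\|^2-\|b\|^2+\|a-b\|^2$ to each discrete time difference then turns the surviving terms into the energy increments $\lambda\|\nabla\phi_h^{n+1}\|^2$, $\lambda\gamma\|\phi_h^{n+1}\|^2$, $|\rho_h^{n+1}|^2$, etc., plus the genuine dissipation $2M\tau\|\nabla\mu_h^{n+1}\|^2+2\nu\tau\|\nabla\tilde{\mathbf u}_h^{n+1}\|^2$ and several nonnegative remainders.

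The delicate step is the pressure--velocity bookkeeping coming from the projection. I would first use \eqref{eq_fully_discrete_scheme_u_incompressible} to record the discrete orthogonality $(\mathbf u_h^{n+1},\nabla q_h)=0$ for $q_h\in\mathring S_h^r$, so that $(\mathbf u_h^{n+1},\nabla p_h^{n+1})=0$ and $(\mathbf u_h^{n+1},\nabla(p_h^{n+1}-p_h^n))=0$. Rewriting \eqref{eq_fully_discrete_scheme_u_p_correction} as $\mathbf u_h^{n+1}+\tau\nabla p_h^{n+1}=\tilde{\mathbf u}_h^{n+1}+\tau\nabla p_h^n$ and squaring gives $\|\mathbf u_h^{n+1}\|^2+\tau^2\|\nabla p_h^{n+1}\|^2=\|\tilde{\mathbf u}_h^{n+1}\|^2+2\tau(\tilde{\mathbf u}_h^{n+1},\nabla p_h^n)+\tau^2\|\nabla p_h^n\|^2$, together with $\|\tilde{\mathbf u}_h^{n+1}\|^2=\|\mathbf u_h^{n+1}\|^2+\tau^2\|\nabla(p_h^{n+1}-p_h^n)\|^2$. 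These two identities simultaneously explain the presence of $\tau^2\|\nabla p_h\|^2$ in $\tilde E$ and let me trade the kinetic term $\|\tilde{\mathbf u}_h^{n+1}\|^2$ and the leftover coupling $2\tau(\nabla p_h^n,\tilde{\mathbf u}_h^{n+1})$ produced by \eqref{eq_fully_discrete_scheme_u} for the energy difference $\tfrac12\|\mathbf u_h^{n+1}\|^2+\tau^2\|\nabla p_h^{n+1}\|^2$; the coefficient $\tau^2$ (rather than $\tfrac{\tau^2}{2}$) is precisely what is needed for the residual pressure contribution to reduce to a sign-definite multiple of $\|\nabla(p_h^{n+1}-p_h^n)\|^2=\tau^{-2}\|\mathbf u_h^{n+1}-\tilde{\mathbf u}_h^{n+1}\|^2$.

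I expect the main obstacle to be reconciling this projection identity with the factor-$2$ scaling forced by the auxiliary variables. Testing \eqref{eq_fully_discrete_scheme_u} with $2\tilde{\mathbf u}_h^{n+1}$ yields a pressure coupling $2\tau(\nabla p_h^n,\tilde{\mathbf u}_h^{n+1})$, whereas \eqref{eq_fully_discrete_scheme_q} halves the kinetic contribution to $\tfrac12\|\tilde{\mathbf u}_h^{n+1}\|^2$; keeping exact track of which velocity jumps ($\tilde{\mathbf u}_h^{n+1}-\mathbf u_h^n$ from the momentum update versus $\mathbf u_h^{n+1}-\tilde{\mathbf u}_h^{n+1}$ from the projection) remain available as nonnegative numerical dissipation is where the estimate is most error-prone and must be carried out without any slack. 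Once every cross term has cancelled and both projection identities have been inserted, I would discard the nonnegative remainders $\lambda\|\nabla(\phi_h^{n+1}-\phi_h^n)\|^2$, $\lambda\gamma\|\phi_h^{n+1}-\phi_h^n\|^2$, $2\lambda|r_h^{n+1}-r_h^n|^2$, $|\rho_h^{n+1}-\rho_h^n|^2$ and the velocity-jump term to arrive at \eqref{eq_energy_stable_inequation0314}.
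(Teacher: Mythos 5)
Your proposal follows essentially the same route as the paper's proof: the same multipliers (the paper's are yours scaled by $\tau$, namely $2\tau\mu_h^{n+1}$, $2(\phi_h^{n+1}-\phi_h^n)$, $4\lambda\tau r_h^{n+1}$, $2\tau\tilde{\mathbf u}_h^{n+1}$, plus the $\rho$-equation), the same pairwise cancellation of the three SAV coupling terms, and the same two projection identities obtained by squaring $\mathbf u_h^{n+1}+\tau\nabla p_h^{n+1}=\tilde{\mathbf u}_h^{n+1}+\tau\nabla p_h^n$. The factor-two bookkeeping you flag between the halved kinetic term $\tfrac{1}{2}\|\tilde{\mathbf u}_h^{n+1}\|^2$ (after adding the $\rho$-equation) and the full pressure coupling $2\tau(\nabla p_h^n,\tilde{\mathbf u}_h^{n+1})$ is indeed the one delicate point, and the paper handles it exactly as you describe, by inserting both \eqref{eq_u_p_0326} and \eqref{eq_recasted_byeq_u_p_0326} before discarding the nonnegative increments.
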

 \begin{proof}
 	Taking $w_h=2\tau\mu_h^{n+1}$ in \eqref{eq_fully_discrete_scheme_phi}, $\varphi_h=2\left(\phi_h^{n+1}-\phi_h^n\right)$ in
 	\eqref{eq_fully_discrete_scheme_mu}, respectively,
 	the \eqref{eq_fully_discrete_scheme_r} is multiplied by $4\lambda\tau r_h^{n+1}$ and then we combine  these results to get the following equation:
 	\begin{equation}
 		\label{eq_above_three_terms_results_combined}
 		\begin{aligned}
 			\lambda\left(\|\nabla\phi_h^{n+1}\|^2-\|\nabla\phi_h^n\|^2+\|\nabla\phi_h^{n+1}-\nabla\phi_h^n\|^2\right)+&\\
 			\lambda\gamma\left(\|\phi_h^{n+1}\|^2-\|\phi_h^n\|^2+\|\phi_h^{n+1}-\phi_h^n\|^2\right)+&\\
 			2\lambda\left(|r_h^{n+1}|^2-|r_h^n|^2+|r_h^{n+1}-r_h^n|^2\right)+&\\
 			\frac{2\tau r_h^{n+1}}{\sqrt{E_{1h}^n}}\left(\tilde{\mathbf{u}}_h^{n+1},\mu_h^n\nabla\phi_h^n\right)&=-2M\tau\|\nabla \mu_h^{n+1}\|.
 		\end{aligned}
 	\end{equation}
 	Choosing $\mathbf{v}_h=2\tau\tilde{\mathbf{u}}_h^{n+1}$ in \eqref{eq_fully_discrete_scheme_u}  leads to
 	\begin{equation}
 		\label{eq_u2_0322}
 		\begin{aligned}
 			\|\tilde{\mathbf{u}}_h^{n+1}\|^2-\|\mathbf{{u}}_h^{n}\|^2&+\|\tilde{\mathbf{u}}_h^{n+1}-\mathbf{{u}}_h^{n}\|^2
 			+\frac{2\tau \rho_h^{n+1}}{\sqrt{E_{2h}^n}}\left(\mathbf{u}_h^n\cdot\nabla\mathbf{u}_h^n,\tilde{\mathbf{u}}_h^{n+1}\right)+2\tau\left(\nabla p_h^{n},\tilde{\mathbf{u}}_h^{n+1}\right)\\
 			=&\left(\frac{2\tau r_h^{n+1}}{\sqrt{E_{1h}^n}}\mu^n\nabla\phi_h^n,\tilde{\mathbf{u}}_h^{n+1}\right)-2\nu\tau\|\nabla \tilde{\mathbf{u}}_h^{n+1}\|^2.
 		\end{aligned}
 	\end{equation}
 	Recalling \eqref{eq_fully_discrete_scheme_q}, we have 
 	\begin{equation}\label{eq_q2_0324}
 		|\rho_h^{n+1}|^2-|\rho_h^{n}|^2+|\rho_h^{n+1}-\rho_h^n|^2=\left(\left(\tilde{\mathbf{u}}_h^{n+1}-\mathbf{u}_h^n\right)+\frac{2\tau \rho_h^{n+1}}{\sqrt{E_{2h}^n}}\mathbf{u}_h^n\cdot\nabla\mathbf{u}_h^n,\tilde{\mathbf{u}}_h^{n+1}\right).
 	\end{equation}
 	Combining the above two equations \eqref{eq_u2_0322} and \eqref{eq_q2_0324}, we have
 	\begin{equation}\label{eq_combining_eq_u2_0322_eq_q2_0324}
 		\begin{aligned}
 			&|\rho_h^{n+1}|^2-|\rho_h^{n}|^2+|\rho_h^{n+1}-\rho_h^n|^2+\frac{1}{2}\left(\|\tilde{\mathbf{u}}_h^{n+1}\|^2-\|\mathbf{{u}}_h^{n}\|^2+\|\tilde{\mathbf{u}}_h^{n+1}-\mathbf{{u}}_h^{n}\|^2\right)+2\tau\left(\nabla p_h^{n},\tilde{\mathbf{u}}_h^{n+1}\right)\\
 			=&\left(\frac{2\tau r_h^{n+1}}{\sqrt{E_{1h}^n}}\mu^n\nabla\phi_h^n,\tilde{\mathbf{u}}_h^{n+1}\right)-2\nu\tau\|\nabla \tilde{\mathbf{u}}_h^{n+1}\|^2.
 		\end{aligned}
 	\end{equation}
 	The equation \eqref{eq_fully_discrete_scheme_u_p_correction} can be recasted as
 	\begin{equation}\label{eq_u_dp_0325}
 		\mathbf{u}_h^{n+1}+\tau\nabla p_h^{n+1}=\tilde{\mathbf{u}}_h^{n+1}+\tau \nabla p_h^n.
 	\end{equation}
 	Taking the inner product of \eqref{eq_u_dp_0325} with itself on both sides and noticing that
 	$$\left(\nabla p_h^{n+1}, \mathbf{u}_h^{n+1}\right)=-\left(p_h^{n+1}, \nabla \cdot \mathbf{u}_h^{n+1}\right)=0,$$
 	we have
 	\begin{equation}\label{eq_u_p_0326}
 		\|\mathbf{u}_h^{n+1}\|^2+\tau^2\left\|\nabla p_h^{n+1}\right\|^2=\left\|\tilde{\mathbf{u}}_h^{n+1}\right\|^2+2 \tau\left(\nabla p_h^n, \tilde{\mathbf{u}}_h^{n+1}\right)+\tau^2\left\|\nabla p_h^n\right\|^2.
 	\end{equation}
 	According to \eqref{eq_u_dp_0325}, we can recast by
 	\begin{equation}
 		\tilde{\mathbf{u}}_h^{n+1}=\mathbf{u}_h^{n+1}+\tau \left(\nabla p_h^{n+1}-\nabla p_h^n\right).
 	\end{equation}
 	Thus, we have
 	\begin{equation}\label{eq_recasted_byeq_u_p_0326}
 		\|\tilde{\mathbf{u}}_h^{n+1}\|^2=\|\mathbf{u}_h^{n+1}\|^2+\tau^2\|\nabla \left(p_h^{n+1}- p_h^n\right)\|^2.
 	\end{equation}
 	Combining \eqref{eq_u2_0322} with \eqref{eq_combining_eq_u2_0322_eq_q2_0324}, \eqref{eq_u_p_0326} and \eqref{eq_recasted_byeq_u_p_0326} results in
 	 \begin{equation}\label{eq_E_all_0328}
 	 	\begin{aligned}
 	 		\frac{1}{2}&\left(\|\mathbf{u}_h^{n+1}\|^2-\|\mathbf{u}_h^n\|^2+\|\tilde{\mathbf{u}}_h^{n+1}-\mathbf{u}_h^n\|^2\right)+|\rho_h^{n+1}|^2-|\rho_h^{n}|^2+|\rho_h^{n+1}-\rho_h^n|^2\\
 	 		&+\tau^2\left(\|\nabla p_h^{n+1}\|^2-\|\nabla p_h^n\|^2+\|\nabla p_h^{n+1}-\nabla p_h^n\|^2\right)\\
 	 		=&\left(\frac{2\tau r_h^{n+1}}{\sqrt{E_{1h}^n}}\mu^n\nabla\phi_h^n,\tilde{\mathbf{u}}_h^{n+1}\right)-2\nu\tau\|\nabla \tilde{\mathbf{u}}_h^{n+1}\|^2.
 	 	\end{aligned}
 	 \end{equation}
 	Thus, we can obtain the desired result by combining \eqref{eq_above_three_terms_results_combined} with \eqref{eq_E_all_0328} as follows:
 	\begin{equation}
 		\label{eq_energy_stability_boundness}
 		\begin{aligned}
 			&\lambda\left(\|\nabla\phi_h^{n+1}\|^2-\|\nabla\phi_h^n\|^2+\|\nabla\phi_h^{n+1}-\nabla\phi_h^n\|^2\right)+
 			\lambda\gamma\left(\|\phi_h^{n+1}\|^2-\|\phi_h^n\|^2+\|\phi_h^{n+1}-\phi_h^n\|^2\right)\\
 			&+\frac{1}{2}\left(\|\mathbf{u}_h^{n+1}\|^2-\|\mathbf{{u}}_h^{n}\|^2+\|\tilde{\mathbf{u}}_h^{n+1}-\mathbf{u}_h^n\|^2\right)+
 			|\rho_h^{n+1}|^2-|\rho_h^{n}|^2+	|\rho_h^{n+1}-\rho_h^n|^2\\
 			&+\tau^2\left(\|\nabla p_h^{n+1}\|^2-\|\nabla p_h^n\|^2+\|\nabla p_h^{n+1}-\nabla p_h^n\|^2\right)+
 			2\lambda\left(|r_h^{n+1}|^2-|r_h^n|^2+|r_h^{n+1}-r_h^n|^2\right)\\
 			=&-2M\tau\|\nabla \mu_h^{n+1}\|^2-2\nu\tau\|\nabla \tilde{\mathbf{u}}_h^{n+1}\|^2\leq 0.
 		\end{aligned}
 	\end{equation}
 	This completes the proof.
 \end{proof}
 Next, denoting the modified energy $\tilde{E}_h^{n+1}=\tilde{E}(\phi_h^{n+1},\mathbf{u}_h^{n+1},r_h^{n+1},\rho_h^{n+1})$, we give the following stability results of the scheme \eqref{eq_fully_discrete_scheme_phi}-\eqref{eq_fully_discrete_scheme_q}. 
 
 \begin{Lemma}\label{lemma_boundedness_E_phi_Deltaphi}
 	For all $m\geq 0$, suppose that $\tilde{E}_h^0\leq C_0$, the following results hold
 	\begin{flalign}
 		\label{eq_boundness_E0}
 		|\tilde{E}_h^{m+1}|+2\tau\sum_{n=0}^{m}\left(\nu\|\nabla\tilde{\mathbf{u}}_h^{n+1}\|^2+M\|\nabla\mu_h^{n+1}\|^2\right)\leq C_0,&\\
 		\label{eq_boundness_phi_H1}
 		\|\phi_h^{m+1}\|_{H^1}^2+\tau\sum_{n=0}^{m}\|\mu_h^{n+1}\|^2\leq C\tau,
 	\end{flalign}
 	and
 	\begin{equation}
 		\label{eq_boundness_Laplace_phi}
 		\|\Delta_h\phi_h^{m+1}\|^2+\tau\sum_{n=0}^{m}\|\Delta_h\mu_h^{n+1}\|^2\leq C\tau,
 	\end{equation}
 	where $C$ is positive constant that  does not depend on $\tau$ and $h$.
 \end{Lemma}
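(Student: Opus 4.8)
The plan is to establish the three bounds as a cascade, each feeding the next. For \eqref{eq_boundness_E0} I would sum the energy-stability inequality \eqref{eq_energy_stable_inequation0314} over $n=0,\dots,m$; the modified-energy differences telescope, and since every term defining $\tilde{E}_h^{m+1}$ is nonnegative ($\lambda,\gamma>0$) one has $\tilde{E}_h^{m+1}=|\tilde{E}_h^{m+1}|$, so $|\tilde{E}_h^{m+1}|+2\tau\sum_{n=0}^m(\nu\|\nabla\tilde{\mathbf{u}}_h^{n+1}\|^2+M\|\nabla\mu_h^{n+1}\|^2)\le\tilde{E}_h^0\le C_0$. In particular this records the uniform bounds $\|\phi_h^{n}\|_{H^1}\le C$, $|r_h^n|\le C$, $\|\mathbf{u}_h^n\|\le C$ and the accumulated dissipation $\tau\sum\|\nabla\mu_h^{n+1}\|^2\le C$ used below.

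For \eqref{eq_boundness_phi_H1} the $H^1$ bound on $\phi_h^{m+1}$ is read off directly from \eqref{eq_boundness_E0}. Since \eqref{eq_boundness_E0} controls only $\nabla\mu_h^{n+1}$, I would recover the full $L^2$ norm by pinning the spatial mean: testing \eqref{eq_fully_discrete_scheme_mu} with the constant $1$ kills the $(\nabla\phi_h^{n+1},\nabla\cdot)$ term and gives $\int_\Omega\mu_h^{n+1}=\lambda\gamma\int_\Omega\phi_h^{n+1}+\frac{\lambda r_h^{n+1}}{\sqrt{E_{1h}^n}}\int_\Omega F'(\phi_h^n)$. The mean is then bounded using the uniform bound on $r_h^{n+1}$, the positivity of the SAV denominator $E_{1h}^n$ (ensured by the choice of $C_1$), and the cubic growth of $F'$ estimated via the embedding \eqref{eq_boundness_basic_inequalities_0003} and the $H^1$ bound on $\phi_h^n$; mass conservation (test \eqref{eq_fully_discrete_scheme_phi} with $1$ and use the weak incompressibility \eqref{eq_fully_discrete_scheme_u_incompressible}) bounds $\int_\Omega\phi_h^{n+1}$. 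A Poincar\'e--Wirtinger inequality then yields $\|\mu_h^{n+1}\|^2\le C(\|\nabla\mu_h^{n+1}\|^2+1)$, and summing in $\tau$ with \eqref{eq_boundness_E0} closes \eqref{eq_boundness_phi_H1}.

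For \eqref{eq_boundness_Laplace_phi} I would first recast the scheme, using \eqref{eq_discrete_Laplace_operator_0001}--\eqref{eq_discrete_Laplace_operator_0002} and the $L^2$-projection $I_h$, as the $S_h^r$-identities $\lambda\Delta_h\phi_h^{n+1}=-\mu_h^{n+1}+\lambda\gamma\phi_h^{n+1}+\frac{\lambda r_h^{n+1}}{\sqrt{E_{1h}^n}}I_hF'(\phi_h^n)$ and $M\Delta_h\mu_h^{n+1}=\delta_\tau\phi_h^{n+1}+\frac{r_h^{n+1}}{\sqrt{E_{1h}^n}}I_h(\mathbf{u}_h^n\cdot\nabla\phi_h^n)$. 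Testing \eqref{eq_fully_discrete_scheme_phi} with the admissible mean-zero function $w_h=-2\tau\Delta_h\mu_h^{n+1}$, the diffusion term produces the dissipation $2M\tau\|\Delta_h\mu_h^{n+1}\|^2$, while rewriting $-2(\phi_h^{n+1}-\phi_h^n,\Delta_h\mu_h^{n+1})=2(\nabla(\phi_h^{n+1}-\phi_h^n),\nabla\mu_h^{n+1})$ and inserting the first identity yields the telescoping quantity $\lambda(\|\Delta_h\phi_h^{n+1}\|^2-\|\Delta_h\phi_h^n\|^2+\|\Delta_h(\phi_h^{n+1}-\phi_h^n)\|^2)$, a companion $H^1$-telescoping $\gamma$-term, and two remainders coming from the convection and from $F'(\phi_h^n)$. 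Summing over $n$ and invoking the discrete Gr\"onwall inequality (Lemma \ref{lemma_discrete_Gronwall_inequation}) with $a_n=\|\Delta_h\phi_h^n\|^2$ should then deliver \eqref{eq_boundness_Laplace_phi}.

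The main obstacle is the control of the two remainders at this $H^2$ level, since a crude bound on either requires exactly the $\Delta_h$-control being sought. For the convection remainder I would use $\|\mathbf{u}_h^n\cdot\nabla\phi_h^n\|\le C\|\mathbf{u}_h^n\|^{1/2}\|\nabla\mathbf{u}_h^n\|^{1/2}\|\nabla\phi_h^n\|^{1/2}\|\Delta_h\phi_h^n\|^{1/2}$ from \eqref{eq_boundness_basic_inequalities_0001}, so that Young's inequality absorbs the factor $\|\Delta_h\mu_h^{n+1}\|^2$ into the dissipation while leaving $C\tau\|\nabla\mathbf{u}_h^n\|^2$ (summable by \eqref{eq_boundness_E0}) and a Gr\"onwall term $C\tau\|\Delta_h\phi_h^n\|^2$. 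The nonlinear remainder $(\nabla(\phi_h^{n+1}-\phi_h^n),\nabla I_hF'(\phi_h^n))$ is the genuinely delicate one: a symmetric Young split loses the factor $\tau$, so the sum over the $O(T/\tau)$ steps would diverge; I would instead perform a summation-by-parts in time, transferring the increment onto $\frac{\lambda r_h^{n+1}}{\sqrt{E_{1h}^n}}I_hF'(\phi_h^n)$ and bounding the resulting $\|\nabla(\,\cdot^{n+1}-\cdot^{n})\|$ through the increments of $r_h$, of $\sqrt{E_{1h}^n}$, and of $F'(\phi_h^n)$. These increments carry the correct $\tau$-scaling and are controlled by the summable bounds $\sum\|\nabla(\phi_h^{n+1}-\phi_h^n)\|^2$ and $\sum|r_h^{n+1}-r_h^n|^2$ already furnished by the energy identity \eqref{eq_energy_stability_boundness}, together with the embeddings \eqref{eq_boundness_basic_inequalities_0003}--\eqref{eq_boundness_basic_inequalities_0004}. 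Closing this last estimate cleanly, rather than \eqref{eq_boundness_E0} or \eqref{eq_boundness_phi_H1}, is where essentially all the work resides.
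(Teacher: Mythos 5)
Your treatment of \eqref{eq_boundness_E0} coincides with the paper's (telescope the energy inequality and use that every term of $\tilde{E}_h$ is nonnegative), and your route to \eqref{eq_boundness_phi_H1} --- reading the $H^1$ bound off the modified energy and recovering $\|\mu_h^{n+1}\|$ from $\|\nabla\mu_h^{n+1}\|$ by pinning the mean with the constant test function and Poincar\'e--Wirtinger --- is a legitimate alternative to the paper's, which instead tests \eqref{eq_fully_discrete_scheme_phi} with $2\tau\phi_h^{n+1}$ and \eqref{eq_fully_discrete_scheme_mu} with $2\tau\mu_h^{n+1}$ and applies Gr\"onwall. Both arguments land on the same estimate (and both in fact deliver a bound by a constant rather than by $C\tau$; the $C\tau$ in the statement appears to be a slip).

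The genuine gap is in \eqref{eq_boundness_Laplace_phi}, and it sits exactly where you say ``essentially all the work resides'': you never close the estimate of the nonlinear remainder, and the summation-by-parts detour you propose does not obviously converge. After Abel summation the difference term is $\sum_n\left(g_h^n-g_h^{n-1},\Delta_h\phi_h^n\right)$ with $g_h^n=\frac{\lambda r_h^{n+1}}{\sqrt{E_{1h}^n}}I_hF'(\phi_h^n)$; the increments of $r_h$, of $E_{1h}$ and of $F'(\phi_h)$ are only square-summable in $n$ (that is what \eqref{eq_energy_stability_boundness} gives), not square-summable after division by $\tau$, so Cauchy--Schwarz over $n$ leaves a factor $\bigl(\sum_n\|\Delta_h\phi_h^n\|^2\bigr)^{1/2}$ carrying no $\tau$ weight, which Gr\"onwall cannot absorb. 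The paper sidesteps the whole issue by never passing to the gradient form: it keeps the remainder as $\frac{2\tau\lambda r_h^{n+1}}{\sqrt{E_{1h}^n}}\left(F'(\phi_h^n),\Delta_h\delta_{\tau}\phi_h^{n+1}\right)$, estimates it by Cauchy--Schwarz using only $\|F'(\phi_h^n)\|\leq C$ (which needs nothing beyond the $H^1$ bound, via \eqref{eq_boundness_phi_nonlinear_term}), and absorbs the factor $\|\Delta_h(\phi_h^{n+1}-\phi_h^n)\|^2=\tau^2\|\Delta_h\delta_{\tau}\phi_h^{n+1}\|^2$ into the increment term produced on the left by the telescoping. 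Your observation that the surviving $\|F'(\phi_h^n)\|^2$ contribution risks losing a factor of $\tau$ is a fair criticism of that split as well (the factor of $\tau$ attached to it in \eqref{eq_the_second_term_of_taking_inner_product_two_comb002} is not what Young's inequality delivers for a $\tau$-independent constant), but the decisive difference is that the paper's $L^2$ pairing reduces the difficulty to bookkeeping of $\tau$-factors, whereas your gradient form additionally requires a bound on $\|\nabla I_hF'(\phi_h^n)\|$, which drags in $\|\nabla\phi_h^n\|_{L^4}$ and hence the very $\|\Delta_h\phi_h^n\|$ control you are trying to establish. As it stands, your proof of \eqref{eq_boundness_Laplace_phi} is not complete.
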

 \begin{proof}
 	Summing from $n=0$ to $m$ in \eqref{eq_energy_stability_boundness}, we can derive that the \eqref{eq_boundness_E0} holds. 
 	Letting $w_h=2\tau\phi_h^{n+1}$ in \eqref{eq_fully_discrete_scheme_phi}, 
 	we get
 	\begin{equation}\label{eq_taking_inner_tau_phi}
 		\|\phi_h^{n+1}\|^2-\|\phi_h^n\|^2+\|\phi_h^{n+1}-\phi_h^n\|^2=-2\tau M\left(\nabla\mu_h^{n+1},\nabla\phi_h^{n+1}\right)-\frac{2\tau r_h^{n+1}}{\sqrt{E_{1h}^{n}}}\left(\mathbf{u}_h^n\cdot\nabla\phi_h^n,\phi_h^{n+1}\right).
 	\end{equation}
 	Using the Cauchy-Schwarz inequality, Young inequality, \eqref{eq_boundness_basic_inequalities_0001}, \eqref{eq_boundness_basic_inequalities_0003}
 	and \eqref{eq_boundness_E0}, we estimate the right-hand side of \eqref{eq_taking_inner_tau_phi},
 	\begin{equation}\label{eq_boundness_first_term}
 		\big|-2\tau M\left(\nabla\mu_h^{n+1},\nabla\phi_h^{n+1}\right)\big|\leq C\tau\left(\|\nabla\mu_h^{n+1}\|^2+\|\nabla\phi_h^{n+1}\|^2\right),
 	\end{equation}
 	\begin{equation}\label{eq_boundness_second_term}
 		\begin{aligned}
 			\big|-\frac{2\tau r_h^{n+1}}{\sqrt{E_{1h}^{n}}}\left(\mathbf{u}_h^n\cdot\nabla\phi_h^n,\phi_h^{n+1}\right)\big|
 			\leq &~\frac{C\tau|r_h^{n+1}|}{\sqrt{E_{1h}^{n}}}\|\mathbf{u}_h^n\|_{L^4}\|\nabla\phi_h^n\|\|\phi_h^{n+1}\|_{L^4}\\
 			\leq &~C\tau\|\nabla\phi_h^{n+1}\|^2+C\tau\|\nabla\mathbf{u}_h^n\|^2\left(\|\phi_h^n\|^2+\|\nabla\phi_h^n\|^2\right).
 		\end{aligned}
 	\end{equation}
 	Combining the two inequalities \eqref{eq_boundness_first_term}-\eqref{eq_boundness_second_term} to the \eqref{eq_taking_inner_tau_phi}, and then summing the obtained results from $n=0$ to $m$, we obtain
 	\begin{equation}
 		\begin{aligned}
 			\|\phi_h^{m+1}\|^2\leq \|\phi_h^0\|^2+C\tau\sum_{n=0}^{m}\left(\|\nabla\mu_h^{n+1}\|^2+\|\nabla\phi_h^{n+1}\|^2+\|\nabla\mathbf{u}_h^n\|^2\|\phi_h^n\|^2+\|\nabla\mathbf{u}_h^n\|^2\|\nabla\phi_h^n\|^2\right).
 		\end{aligned}
 	\end{equation}
 	Using the Lemma \ref{lemma_discrete_Gronwall_inequation} and \eqref{eq_boundness_E0}, we have
 	\begin{equation}
 		\label{eq_boundness_phi_L2}
 		\|\phi_h^{m+1}\|^2\leq C.
 	\end{equation}
 	Thus, we have 
 	\begin{equation}
 		\label{eq_boundness_phi_H1001}
 		\|\phi_h^{m+1}\|_{H^1}^2\leq \|\phi_h^{m+1}\|^2+\|\nabla\phi_h^{m+1}\|^2\leq C.
 	\end{equation}
 	Taking 
 	$\varphi_h=2\tau\mu_h^{n+1}$ in
 	 \eqref{eq_fully_discrete_scheme_mu}, we get
 	\begin{equation}
 		\label{eq_taking_inner_product_2tau_mu}
 		\begin{aligned}
 			2\tau\|\mu_h^{n+1}\|^2=&~2\tau\lambda\left(\nabla\phi_h^{n+1},\nabla\mu_h^{n+1}\right)+2\tau\lambda\gamma\left(\phi_h^{n+1},\mu_h^{n+1}\right)
 			+\frac{2\tau\lambda r_h^{n+1}}{\sqrt{E_{1h}^n}}\left(F'(\phi_h^n),\mu_h^{n+1}\right)\\
 			\leq &~C\tau\left(\|\nabla\phi_h^{n+1}\|^2+\|\nabla\mu_h^{n+1}\|^2+\|\phi_h^{n+1}\|^2\right)+\tau\|\mu_h^{n+1}\|^2+|\frac{2C\tau\lambda r_h^{n+1}}{\sqrt{E_{1h}^n}}|^2\|F'(\phi_h^n)\|^2.
 		\end{aligned}
 	\end{equation}
 	Recalling the (2.37) in \cite{2015_Diegel_Analysis_of_a_mixed_finite_element_method_for_a_Cahn_Hilliard_Darcy_Stokes_system} and \eqref{eq_boundness_basic_inequalities_0003}, we can obtain
 	\begin{equation}
 		\label{eq_boundness_phi_nonlinear_term}
 		\begin{aligned}
 			\|F'(\phi_h^n)\|^2=&~\frac{1}{\varepsilon^4}\|(\phi_h^n)^3-\phi_h^n\|^2\leq C\left(\|\phi_h^n\|_{L^6}^6+\|\phi_h^n\|^2\right)\\
 				\leq &~C\left(\|\phi_h^n\|_{H^1}^6+\|\phi_h^n\|^2\right)\leq C.
 		\end{aligned}
 	\end{equation}
 	Thus, we can derive 
 	\begin{equation}
 		\label{eq_combine_above_inequalities}
 		\begin{aligned}
 			\tau\|\mu_h^{n+1}\|^2\leq &~C\tau\left(\|\nabla\phi_h^{n+1}\|^2+\|\nabla\mu_h^{n+1}\|^2+\|\phi_h^{n+1}\|^2\right)+|\frac{2C\tau\lambda r_h^{n+1}}{\sqrt{E_{1h}^n}}|^2\|F'(\phi_h^n)\|^2\\
 			\leq &~C\tau\left(\|\nabla\phi_h^{n+1}\|^2+\|\nabla\mu_h^{n+1}\|^2+\|\phi_h^{n+1}\|^2\right)+C\tau.
 		\end{aligned}
 	\end{equation}
 	Summing the \eqref{eq_combine_above_inequalities} from $n=0$ to $m$, and	using the Lemma \ref{lemma_discrete_Gronwall_inequation}, we get
 	\begin{equation}\label{eq_tau_mu_sum_boundedness}
 		\tau\sum_{n=0}^{m}\|\mu_h^{n+1}\|^2\leq C\left(1+2\tau\right)+C\tau\left(\left(C_1+2C_2\right)^3+C_1\right)\leq C\tau.
 	\end{equation}
 	Combining the \eqref{eq_boundness_phi_H1001} and \eqref{eq_tau_mu_sum_boundedness}, we have the result of \eqref{eq_boundness_phi_H1}.\\ 	
 	We then prove that the \eqref{eq_boundness_Laplace_phi} holds. Letting $\left(w_h,\varphi_h\right)=2\tau\left(\Delta_h\mu_h^{n+1},\Delta_h\delta_{\tau}\phi_h^{n+1}\right)$ in \eqref{eq_fully_discrete_scheme_phi}, \eqref{eq_fully_discrete_scheme_mu}, 
 	and then combining the two results,
 	we can obtain
 	\begin{equation}
 		\label{eq_Delta_phi_boundness}
 		\begin{aligned}
 			&~\lambda\tau\left(\|\Delta_h\phi_h^{n+1}\|^2-\|\Delta_h\phi_h^{n}\|^2+\|\Delta_h\phi_h^{n+1}-\Delta_h\phi_h^{n}\|^2\right)+2\tau M\|\Delta_h\mu_h^{n+1}\|^2\\
 			=&~\tau\lambda\gamma\left(\|\nabla\phi_h^{n+1}\|^2-\|\nabla\phi_h^{n}\|^2+\|\nabla\phi_h^{n+1}-\nabla\phi_h^n\|^2\right)\\
 			&~+\frac{2\tau r_h^{n+1}}{\sqrt{E_{1h}^{n}}}\left(\mathbf{u}_h^n\cdot\nabla\phi_h^n,\Delta_h\mu_h^{n+1}\right)+\frac{2\tau\lambda r_h^{n+1}}{\sqrt{E_{1h}^n}}\left(F'(\phi_h^n),\Delta_h\delta_{\tau}\phi_h^{n+1}\right).
 		\end{aligned}
 	\end{equation}
 	According to the Lemma 2.14 in \cite{2015_Diegel_Analysis_of_a_mixed_finite_element_method_for_a_Cahn_Hilliard_Darcy_Stokes_system}, using the 
 	\eqref{eq_boundness_E0}, we have
 	\begin{equation}
 		\label{eq_the_second_term_of_taking_inner_product_two_comb001}
 		\begin{aligned}
 			\bigg|\frac{2\tau r_h^{n+1}}{\sqrt{E_{1h}^{n}}}\left(\mathbf{u}_h^n\cdot\nabla\phi_h^n,\Delta_h\mu_h^{n+1}\right)\bigg|
 			\leq &~2\tau\frac{|r_h^{n+1}|}{\sqrt{E_{1h}^{n}}}\|\mathbf{u}_h^n\|_{L^4}\|\nabla\phi_h^n\|_{L^4}\|\Delta_h\mu_h^{n+1}\|\\
 			\leq &~\frac{C\tau|r_h^{n+1}|}{\sqrt{E_{1h}^{n}}}\|\nabla\mathbf{u}_h^n\|\left(\|\nabla\phi_h^n\|+\|\Delta_h\phi_h^n\|\right)\|\Delta_h\mu_h^{n+1}\|\\
 			\leq &~\tau M\|\Delta_h\mu_h^{n+1}\|^2+\frac{C\tau}{C_0}\|\nabla\mathbf{u}_h^n\|^2\left(\|\nabla\phi_h^n\|^2+\|\Delta_h\phi_h^n\|^2\right),
 		\end{aligned}
 	\end{equation}
 	and
 	\begin{equation}
 		\label{eq_the_second_term_of_taking_inner_product_two_comb002}
 		\begin{aligned}
 			\bigg|\frac{2\tau\lambda r_h^{n+1}}{\sqrt{E_{1h}^n}}\left(F'(\phi_h^n),\Delta_h\delta_{\tau}\phi_h^{n+1}\right)\bigg|
			\leq &~\frac{2\tau\lambda|r_h^{n+1}|}{\sqrt{E_{1h}^n}}\|F'(\phi_h^n)\|\|\Delta_h\delta_{\tau}\phi_h^{n+1}\|\\
			\leq &~\frac{C\tau}{\sqrt{C_0}}\|F'(\phi_h^n)\|^2+\tau^2\|\Delta_h\delta_{\tau}\phi_h^{n+1}\|^2\\
			\leq &~\frac{C\tau}{\sqrt{C_0}}+\tau^2\|\Delta_h\delta_{\tau}\phi_h^{n+1}\|^2.
 		\end{aligned}
 	\end{equation}
 	Combining the \eqref{eq_the_second_term_of_taking_inner_product_two_comb001}-\eqref{eq_the_second_term_of_taking_inner_product_two_comb002} 
 	with \eqref{eq_Delta_phi_boundness}, we have
 	\begin{equation}
 		\label{eq_combining_the_two}
 		\begin{aligned}
 			&~\tau\lambda\left(\|\Delta_h\phi_h^{n+1}\|^2-\|\Delta_h\phi_h^{n}\|^2+\|\Delta_h\phi_h^{n+1}-\Delta_h\phi_h^{n}\|^2\right)+\tau M\|\Delta_h\mu_h^{n+1}\|^2\\
 			\leq &~\tau\lambda\gamma\left(\|\nabla\phi_h^{n+1}\|^2-\|\nabla\phi_h^{n}\|^2+\|\nabla\phi_h^{n+1}-\nabla\phi_h^n\|^2\right)\\
 			&~+\frac{C\tau}{\sqrt{C_0}}\|\nabla\mathbf{u}_h^n\|^2\left(\|\nabla\phi_h^n\|^2+\|\Delta_h\phi_h^n\|^2\right)+\frac{C\tau}{\sqrt{C_0}}+\tau^2\|\Delta_h\delta_{\tau}\phi_h^{n+1}\|^2.
 		\end{aligned}
 	\end{equation}
 	Summing the \eqref{eq_combining_the_two} from $n=0$ to $m$ and using the Lemma \ref{lemma_discrete_Gronwall_inequation}, we have 
 	\begin{equation}
 		\label{eq_Delta_phi_boundness_result}
 		\|\Delta_h\phi_h^{m+1}\|^2+\tau\sum_{n=0}^{m}\|\Delta_h\mu_h^{n+1}\|^2\leq C\tau.
 	\end{equation}
 	we finish the proof of this lemma.
 \end{proof}
\section{Error analysis}\label{section_error_analysis}
We establish the error analysis of the fully discrete scheme \eqref{eq_fully_discrete_scheme_phi}-\eqref{eq_fully_discrete_scheme_q} in the section. 
We assume that the variables of CHNS model satisfy the following regularity:
\begin{equation}
	\label{eq_varibles_satisfied_regularities}
	\begin{aligned}
		&\phi\in L^\infty\left(0,T;H^{r+1}(\Omega)\right),~
		\phi_t\in L^\infty\left(0,T;H^1(\Omega)\right)\cap L^2\left(0,T;H^1(\Omega)\right),
		\phi_{tt}\in L^2\left(0,T;L^2(\Omega)\right),\\
		&\mu\in L^\infty\left(0,T;H^{r+1}(\Omega)\right),~\mu_t\in L^{\infty}\left(0,T;H^1(\Omega)\right),
		~p\in L^\infty\left(0,T;H^{r}(\Omega)\right),~p_t\in L^2\left(0,T;H^r(\Omega)\right),\\
		&\mathbf{u}\in L^\infty\left(0,T;\mathbf{H}^{r+1}(\Omega)\right),~
		\mathbf{u}_t\in  L^\infty\left(0,T;\mathbf{H}^1(\Omega)\right)\cap L^2\left(0,T;\mathbf{H}^{r+1}(\Omega)\right), ~
		\mathbf{u}_{tt}\in L^2\left(0,T;\mathbf{H}^{-1}(\Omega)\right).
	\end{aligned}
\end{equation}
We give the following notations for the error analysis of the fully discrete scheme,
\begin{equation}
	\begin{aligned}
		&e_{\phi}^{n+1}=\phi^{n+1}-\phi_h^{n+1},~\Phi_{\phi}^{n+1}=\phi^{n+1}-R_h\phi^{n+1},~\Theta_{\phi}^{n+1}=R_h\phi^{n+1}-\phi_h^{n+1},\\
		&e_{\mu}^{n+1}=\mu^{n+1}-\mu_h^{n+1},~\Phi_{\mu}^{n+1}=\mu^{n+1}-R_h\mu^{n+1},~\Theta_{\mu}^{n+1}=R_h\mu^{n+1}-\mu_h^{n+1},\\
		&e_{\tilde{\mathbf{u}}}^{n+1}=\mathbf{u}^{n+1}-\tilde{\mathbf{u}}_h^{n+1},~\Phi_{\mathbf{u}}^{n+1}=\mathbf{u}^{n+1}-\mathbf{P}_h\mathbf{u}^{n+1},~\Theta_{\tilde{\mathbf{u}}}^{n+1}=\mathbf{P}_h\mathbf{u}^{n+1}-\tilde{\mathbf{u}}_h^{n+1},\\
		&e_{\mathbf{u}}^{n+1}=\mathbf{u}^{n+1}-\mathbf{u}_h^{n+1},~\Phi_{\mathbf{u}}^{n+1}=\mathbf{u}^{n+1}-\mathbf{P}_h\mathbf{u}^{n+1},~\Theta_{\mathbf{u}}^{n+1}=\mathbf{P}_h\mathbf{u}^{n+1}-\mathbf{u}_h^{n+1},\\
		&e_p^{n+1}=p^{n+1}-p_h^{n+1},~\Phi_p^{n+1}=p^{n+1}-P_hp^{n+1},~\Theta_p^{n+1}=P_hp^{n+1}-p_h^{n+1},\\
		&e_r^{n+1}=r^{n+1}-r_h^{n+1},~e_{\rho}^{n+1}=\rho^{n+1}-\rho_h^{n+1}.
	\end{aligned}
\end{equation}
Denoting $E_1^{n+1}=E_1[\phi(t^{n+1})]+C_1\leq C_0$, $E_2^{n+1}=E_2[\mathbf{u}(t^{n+1})]+C_2\leq C_0$ and $g^{n+1}=g(t^{n+1})$, we  obtain the variational form of the model \eqref{eq_0305a}-\eqref{eq_0305f} at moment $t^{n+1}$ as follows 
\begin{flalign}
	\label{eq_truncation_error_equations_phi}
	\left(\delta_{\tau}\phi^{n+1},w\right)+\frac{r^{n+1}}{\sqrt{E_1^{n+1}}}\left(\mathbf{u}^{n+1}\cdot\nabla\phi^{n+1},w\right)+M\left(\nabla\mu^{n+1},\nabla w\right)-\left(R_{\phi}^n,w\right)&=0,\\
	\label{eq_truncation_error_equations_mu}
	\left(\mu^{n+1},\varphi\right)-\lambda\left(\nabla\phi^{n+1},\nabla \varphi\right)-\lambda\gamma\left(\phi^{n+1},\varphi\right)
		-\frac{\lambda r^{n+1}}{\sqrt{E_1^{n+1}}}\left(F'(\phi^{n+1}),\varphi\right)&=0,\\
	\label{eq_truncation_error_equations_r}
	\delta_{\tau}r^{n+1}-\frac{1}{2\sqrt{E_1^{n+1}}}\left(F'(\phi^{n+1}),\delta_{\tau}\phi^{n+1}\right)-R_r^n+T_1^n&=0,\\
	\label{eq_truncation_error_equations_u}
	\left(\delta_{\tau}\mathbf{u}^{n+1},\mathbf{v}\right)+\frac{\rho^{n+1}}{\sqrt{E_2^{n+1}}}\left(\mathbf{u}^{n+1}\cdot\nabla\mathbf{u}^{n+1},\mathbf{v}\right)+\nu\left(\nabla\mathbf{u}^{n+1},\nabla\mathbf{v}\right)+\left(\nabla p^{n},\mathbf{v}\right)\notag&\\
	-\frac{r^{n+1}}{\sqrt{E_1^{n+1}}}\left(\mu^{n+1}\nabla\phi^{n+1},\mathbf{v}\right)-\left(R_{\mathbf{u}}^{n},\mathbf{v}\right)&=0,\\
	\label{eq_truncation_error_equations_u_incompressible}
	\left(\nabla\cdot\mathbf{u}^{n+1},q\right)&=0,\\
	\label{eq_truncation_error_equations_u_p_decouple}
	\frac{\mathbf{u}^{n+1}-\mathbf{u}^{n+1}}{\tau}+\nabla\left(p^{n+1}-p^n\right)-R_p^n&=0,\\
	\label{eq_truncation_error_equations_rho}
	\delta_{\tau}\rho^{n+1}-\frac{1}{2\rho^{n+1}}\left(\mathbf{u}^{n+1},\delta_{\tau}\mathbf{u}^{n+1}\right)-\frac{1}{\sqrt{E_2^{n}}}\left(\mathbf{u}^n\cdot\nabla\mathbf{u}^n,\mathbf{u}^n\right)-R_{\rho}^n+T_2^n&=0,
\end{flalign}
where the truncation errors are defined by
\begin{equation}
	\label{eq_error_truncation_R_T}
	\begin{aligned}
		&R_{\phi}^{n}=\frac{1}{\tau}\int_{t^n}^{t^{n+1}}\left(t^{n+1}-t\right)\phi_{tt}(t)dt,~R_{\mathbf{u}}^n=\frac{1}{\tau}\int_{t^n}^{t^{n+1}}\left(t^{n+1}-t\right)\mathbf{u}_{tt}(t)dt,~R_{p}^n=\int_{t^n}^{t^{n+1}}\nabla p_tdt,\\
		&R_r^n=\frac{1}{\tau}\int_{t^n}^{t^{n+1}}\left(t^{n+1}-t\right)r_{tt}dt,~T_1^n=\frac{1}{2\tau r^n}\left(F'(\phi^n),\int_{t^n}^{t^{n+1}}\left(t^{n+1}-t\right)\phi_{tt}dt\right),\\
		&R_{\rho}^n=\frac{1}{\tau}\int_{t^n}^{t^{n+1}}\left(t^{n+1}-t\right)\rho_{tt}dt,~T_2^n=\frac{1}{2\tau \rho^n}\left(\mathbf{u}^n,\int_{t^n}^{t^{n+1}}\left(t^{n+1}-t\right)\mathbf{u}_{tt}dt\right).
	\end{aligned}
\end{equation}
By Taylor expansion, we can obtain the following lemma for truncation errors \eqref{eq_error_truncation_R_T}.
\begin{Lemma}\label{lemma_truncation_boundedness}
	Under the assumption of regularity \eqref{eq_varibles_satisfied_regularities}, for all
	$m\geq 0$, we obtain the boundedness of the truncation errors as follows:
	$$
		\tau\sum_{n=0}^{m}\left(\|R_{\phi}^n\|_{{-1}}^2+\|R_{\mathbf{u}}^n\|_{{-1}}^2+\|R_{p}^n\|_{{-1}}^2+\|R_p^n\|^2+|R_r^n|^2+|R_{\rho}^n|^2+\|T_1^n\|^2+\|T_2^n\|^2\right)\leq C\tau^2,
	$$
	where $C$ is positive constant that  does not depend on $\tau$ and $h$.
\end{Lemma}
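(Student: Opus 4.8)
The plan is to treat every truncation term by the same two-step mechanism: first bound it pointwise in $n$ by a single time integral of the relevant second (or first) time derivative over $[t^n,t^{n+1}]$, and then apply the Cauchy--Schwarz inequality in time across the subinterval of length $\tau$ before summing. Concretely, for a generic remainder of the form $\tfrac{1}{\tau}\int_{t^n}^{t^{n+1}}(t^{n+1}-t)f(t)\,dt$ appearing in \eqref{eq_error_truncation_R_T}, I would use the triangle inequality for Bochner integrals together with the pointwise bound $0\le t^{n+1}-t\le\tau$ to obtain, in whichever norm $\|\cdot\|_\star$ is dictated by the regularity of $f$,
\begin{equation*}
	\Big\|\tfrac{1}{\tau}\!\int_{t^n}^{t^{n+1}}\!(t^{n+1}-t)f\,dt\Big\|_\star
	\le\int_{t^n}^{t^{n+1}}\!\|f(t)\|_\star\,dt
	\le\tau^{1/2}\Big(\int_{t^n}^{t^{n+1}}\!\|f(t)\|_\star^2\,dt\Big)^{1/2}.
\end{equation*}
Squaring gives a bound by $\tau\int_{t^n}^{t^{n+1}}\|f\|_\star^2\,dt$, so that multiplying by $\tau$ and summing over $n$ turns the accumulated integrals into $\tau^2\int_0^T\|f\|_\star^2\,dt$, which is $\le C\tau^2$ precisely when $f\in L^2(0,T;\star)$.

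Applying this to the field remainders is then a matter of matching norms to the regularity \eqref{eq_varibles_satisfied_regularities}. For $R_\phi^n$ I would take $\|\cdot\|_\star=\|\cdot\|_{-1}$ and use $\phi_{tt}\in L^2(0,T;L^2)\hookrightarrow L^2(0,T;H^{-1})$; for $R_{\mathbf{u}}^n$ the same with $\mathbf{u}_{tt}\in L^2(0,T;\mathbf{H}^{-1})$, which is exactly the weakest assumed regularity. For $R_p^n=\int_{t^n}^{t^{n+1}}\nabla p_t\,dt$, which is only a first-order remainder, the identical Cauchy--Schwarz step yields $\|R_p^n\|^2\le\tau\int_{t^n}^{t^{n+1}}\|\nabla p_t\|^2\,dt$, and since $p_t\in L^2(0,T;H^r)$ with $r\ge1$ this controls both $\|R_p^n\|^2$ and, via $\|R_p^n\|_{-1}\le C\|R_p^n\|$, the weaker $\|R_p^n\|_{-1}^2$ after summation.

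The genuinely new work concerns the scalar auxiliary variables, where I must first certify that $r_{tt},\rho_{tt}\in L^2(0,T)$. Differentiating $r=\sqrt{E_1(\phi)+C_1}$ twice expresses $r_{tt}$ as a combination of $\tfrac{1}{r}\int_\Omega F''(\phi)\phi_t^2$, $\tfrac{1}{r}\int_\Omega F'(\phi)\phi_{tt}$, and a lower-order term in $r_t^2/r$; using the uniform lower bound $r\ge\sqrt{C_1-\gamma}>0$, the embedding $H^{r+1}\hookrightarrow L^\infty$ in two dimensions to control $F'(\phi),F''(\phi)$, and $\phi_t\in L^\infty(0,T;H^1)$, $\phi_{tt}\in L^2(0,T;L^2)$, each piece is square-integrable in time. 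For $\rho=\sqrt{E_2(\mathbf{u})+C_2}$ the analogous computation produces the term $\tfrac{1}{\rho}\int_\Omega\mathbf{u}\cdot\mathbf{u}_{tt}$, which I would interpret as the $\mathbf{H}^{-1}$--$\mathbf{H}^{1}$ duality pairing $\tfrac{1}{\rho}\langle\mathbf{u}_{tt},\mathbf{u}\rangle$ bounded by $\rho^{-1}\|\mathbf{u}_{tt}\|_{-1}\|\mathbf{u}\|_{1}$; together with $\mathbf{u}_t\in L^\infty(0,T;\mathbf{H}^{1})$ and the lower bound on $\rho$ this gives $\rho_{tt}\in L^2(0,T)$. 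The remainders $R_r^n,R_\rho^n$ then fall under the general estimate above. For $T_1^n$ and $T_2^n$ (which are scalars, so $\|\cdot\|$ means $|\cdot|$) I would factor out the scalar prefactor, use $\|F'(\phi^n)\|\le C$ and $\|\mathbf{u}^n\|_{1}\le C$ from \eqref{eq_varibles_satisfied_regularities}, the lower bounds on $r^n,\rho^n$, and again pair $\mathbf{u}_{tt}$ with $\mathbf{u}^n$ in the $\mathbf{H}^{-1}$--$\mathbf{H}^{1}$ sense, so that both reduce to $\tau\int_{t^n}^{t^{n+1}}\|\phi_{tt}\|^2\,dt$ and $\tau\int_{t^n}^{t^{n+1}}\|\mathbf{u}_{tt}\|_{-1}^2\,dt$ respectively.

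The main obstacle I expect is not the Taylor/Cauchy--Schwarz bookkeeping, which is routine, but verifying the temporal regularity of $r_{tt}$ and especially $\rho_{tt}$: since $r$ and $\rho$ are nonlinear functionals of $\phi$ and $\mathbf{u}$, their second derivatives couple products of first derivatives with second derivatives, and the $\rho_{tt}$ estimate is tight in that it can only close by pairing $\mathbf{u}_{tt}\in\mathbf{H}^{-1}$ against $\mathbf{u}\in\mathbf{H}^{1}$---precisely the borderline regularity imposed in \eqref{eq_varibles_satisfied_regularities}. Once those two scalar quantities are shown to lie in $L^2(0,T)$, summation of all eight contributions yields the stated $C\tau^2$ bound.
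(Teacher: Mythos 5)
Your proposal follows essentially the same route as the paper: bound each Taylor remainder pointwise via Cauchy--Schwarz in time to get a factor $\tau\int_{t^n}^{t^{n+1}}\|\cdot\|_\star^2\,dt$, sum to obtain $\tau^2$ times a global space-time norm, and separately verify $r_{tt},\rho_{tt}\in L^2(0,T)$ by differentiating the square-root functionals twice. The one substantive difference is your treatment of the terms involving $\mathbf{u}_{tt}$ in $\rho_{tt}$ and $T_2^n$: you pair $\mathbf{u}_{tt}$ against $\mathbf{u}\in\mathbf{H}^1$ in the $\mathbf{H}^{-1}$--$\mathbf{H}^1$ duality, so that only $\|\mathbf{u}_{tt}\|_{-1}$ enters, whereas the paper bounds these terms by $\|\mathbf{u}_{tt}\|^2$ in $L^2$ (see its estimates of $|\rho_{tt}|^2$ and $\|T_2^n\|^2$), which strictly speaking requires $\mathbf{u}_{tt}\in L^2(0,T;\mathbf{L}^2)$ rather than the assumed $\mathbf{u}_{tt}\in L^2(0,T;\mathbf{H}^{-1})$. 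Your version is therefore the more careful one and closes under exactly the regularity stated in \eqref{eq_varibles_satisfied_regularities}; otherwise the two arguments coincide.
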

\begin{proof}
	By the definitions of truncation errors, we have
	\begin{equation}
		\label{eq_R_phi_boundness_H_neg}
		\begin{aligned}
			\|R_{\phi}^n\|_{{-1}}^2\leq &~\frac{1}{\tau^2}\int_{t^n}^{t^{n+1}}\left(t^{n+1}-t\right)dt\int_{t^n}^{t^{n+1}}\|\phi_{tt}\|_{{-1}}^2dt\\
			\leq &~C\tau\int_{t^n}^{t^{n+1}}\|\phi_{tt}\|_{{-1}}^2dt.
		\end{aligned}
	\end{equation}
	Similarly to \eqref{eq_R_phi_boundness_H_neg}, we derive that
	\begin{flalign}
		\label{eq_R_u_boundness_H_neg}
		&\|R_{\mathbf{u}}^n\|_{{-1}}^2\leq C\tau\int_{t^n}^{t^{n+1}}\|\mathbf{u}_{tt}\|_{{-1}}^2dt,\\
		\label{eq_R_p_boundness_H_neg}
		&\|R_{p}^n\|_{{-1}}^2\leq C\tau\int_{t^n}^{t^{n+1}}\|\nabla p_{t}\|_{{-1}}^2dt,\\
		\label{eq_R_p_boundness_L2}
		&\|R_{p}^n\|^2\leq C\tau\int_{t^n}^{t^{n+1}}\|\nabla p_{t}\|^2dt,
	\end{flalign}
	and
	\begin{flalign}
		\label{eq_T1_L2_boundness}
		&\|T_1^n\|^2\leq C\tau\|F'(\phi^n)\|^2\int_{t^n}^{t^{n+1}}\|\phi_{tt}\|^2dt\leq C\tau\int_{t^n}^{t^{n+1}}\|\phi_{tt}\|^2dt,\\
		\label{eq_T2_L2_boundness}
		&\|T_2^n\|^2\leq C\tau\frac{1}{|\rho^n|}\|\mathbf{u}^n\|^2\int_{t^n}^{t^{n+1}}\|\mathbf{u}_{tt}\|^2dt\leq C\tau\int_{t^n}^{t^{n+1}}\|\mathbf{u}_{tt}\|^2dt.
	\end{flalign}
	By the boundedness of $E_1$, $E_2$, $F'(\phi)$, and $F''(\phi)$, using the \eqref{eq_boundness_basic_inequalities_0003}, we obtain
	\begin{equation}
		\label{eq_boundedness_r_tt}
		\begin{aligned}
			|r_{tt}|^2=&~-\frac{1}{16(E_1(\phi))^3}\left(\int_{\Omega}F'(\phi)\phi_tdx\right)^4
			+\frac{1}{4E_1(\phi)}\left(\int_{\Omega}\left(F''(\phi)\phi_t^2+F'(\phi)\phi_{tt}\right)dx\right)^2\\
			\leq&~C\left(\|F'(\phi)\|_{L^4}^4\|\phi_t\|_{L^4}^4+\|F''(\phi)\|^2\|\phi_t\|_{L^4}^4+\|F'(\phi)\|^2\|\phi_{tt}\|^2\right)\\
			\leq &~C\left(\|\phi_t\|_1^4+\|\phi_{tt}\|^2\right).
		\end{aligned}
	\end{equation}
	And similar to \eqref{eq_boundedness_r_tt}, we have
	\begin{equation}
		\label{eq_boundedness_u_tt}
		\begin{aligned}
			|\rho_{tt}|^2=&~-\frac{1}{(E_2(\mathbf{u}))^3}\left(\int_{\Omega}\mathbf{u}_t\mathbf{u}dx\right)^4
				+\frac{1}{E_2(\mathbf{u})}\left(\int_{\Omega}\left(\mathbf{u}_{tt}\mathbf{u}+\mathbf{u}_t^2\mathbf{u}\right)dx\right)^2
				\leq C\left(\|\mathbf{u}_t\|_1^4+\|\mathbf{u}_{tt}\|^2\right).
		\end{aligned}
	\end{equation}
	Thus, according to the \eqref{eq_boundedness_r_tt} and \eqref{eq_boundedness_u_tt}, we can derive that
	\begin{flalign}
		|R_r^n|^2\leq C\tau\int_{t^n}^{t^{n+1}}|r_{tt}|^2dt\leq C\tau\left(\|\phi_t\|_1^4+\|\phi_{tt}\|^2\right),\\
		|R_{\rho}^n|^2\leq C\tau\int_{t^n}^{t^{n+1}}|\rho_{tt}|^2dt\leq C\tau\left(\|\mathbf{u}_t\|_1^4+\|\mathbf{u}_{tt}\|^2\right).
	\end{flalign}
	So, we can conclude based on the regularity assumption \eqref{eq_varibles_satisfied_regularities}. This proof is complete.
\end{proof}
\subsection{Convergence analysis}
Next, we will derive the error estimation of the parameters based on the above results. Firstly, we need to give the error equations.
Subtracting \eqref{eq_fully_discrete_scheme_phi}-\eqref{eq_fully_discrete_scheme_q} from 
\eqref{eq_truncation_error_equations_phi}-\eqref{eq_truncation_error_equations_rho}, the following error equations are yielded by
\begin{flalign}
	\label{eq_error_equations_phi}
	\left(\delta_{\tau}e_{\phi}^{n+1},w_h\right)+M\left(\nabla e_{\mu}^{n+1},\nabla w_h\right)+\left(\frac{r^{n+1}}{\sqrt{E_1^{n+1}}}\mathbf{u}^{n+1}\cdot\nabla\phi^{n+1}-\frac{r_h^{n+1}}{\sqrt{E_{1h}^{n}}}\mathbf{u}_h^n\cdot\nabla\phi_h^n,w_h\right)&\notag\\
	-\left(R_{\phi}^n,w_h\right)&=0,\\
	\label{eq_error_equations_mu}
	\left(e_{\mu}^{n+1},\varphi_h\right)-\lambda\left(\nabla e_{\phi}^{n+1},\nabla\varphi_h\right)
	-\left(\frac{\lambda r^{n+1}}{\sqrt{E_1^{n+1}}}F'(\phi^{n+1})-\frac{\lambda r_h^{n+1}}{\sqrt{E_{1h}^{n}}}F'(\phi_h^{n}),\varphi_h\right)&\notag\\
	-\lambda\gamma\left(e_{\phi}^{n+1},\varphi_h\right)&=0,\\
	\label{eq_error_equations_r}
	\delta_{\tau}e_r^{n+1}+\frac{1}{2\sqrt{E_{1h}^{n}}}\left(F'(\phi_h^{n}),\delta_{\tau}\phi_h^{n+1}\right)-\frac{1}{2\sqrt{E_1^{n+1}}}\left(F'(\phi^{n+1}),\delta_{\tau}\phi^{n+1}\right)-R_r^n+T_1^n&\notag\\
	+\frac{1}{2\lambda\sqrt{E_{1h}^n}}\left(\left(\mu_h^{n+1},\mathbf{u}_h^n\cdot\nabla\phi_h^n\right)-\left(\tilde{\mathbf{u}}_h^{n+1},\mu_h^n\cdot\nabla\phi_h^n\right)\right)&=0,\\
	\label{eq_error_equations_u}
	\left(\frac{e_{\tilde{\mathbf{u}}}^{n+1}-e_{\mathbf{u}}^n}{\tau},\mathbf{v}_h\right)+\nu\left(\nabla e_{\tilde{\mathbf{u}}}^{n+1},\nabla\mathbf{v}_h\right)-\left(e_p^n,\nabla\cdot\mathbf{v}_h\right)+\left(R_{\mathbf{u}}^n,\mathbf{v}_h\right)&\notag\\
	+\left(\frac{\rho^{n+1}}{\sqrt{E_2^{n+1}}}\mathbf{u}^{n+1}\cdot\nabla\mathbf{u}^{n+1}-\frac{\rho_h^{n+1}}{\sqrt{E_{2h}^n}}\mathbf{u}_h^n\cdot\nabla\mathbf{u}_h^n,\mathbf{v}_h\right)&\notag\\
	-\left(\frac{r^{n+1}}{\sqrt{E_1^{n+1}}}\mu^{n+1}\nabla\phi^{n+1}-\frac{r_h^{n+1}}{\sqrt{E_{1h}^n}}\mu_h^n\phi_h^n,\mathbf{v}_h\right)&=0,\\
	\label{eq_error_equations_incompressible_condition}
	\left(\nabla\cdot e_{\mathbf{u}}^{n+1},q_h\right)&=0,\\
	\label{eq_error_equations_u_p_splitting}
	\frac{e_{\mathbf{u}}^{n+1}-e_{\tilde{\mathbf{u}}}^{n+1}}{\tau}+\nabla\left(e_p^{n+1}-e_p^n\right)-R_p^n&=0,\\
	\label{eq_error_equations_rho}
	\delta_{\tau}e_{\rho}^{n+1}+\frac{1}{\rho_h^{n+1}}\left(\frac{\tilde{\mathbf{u}}_h^{n+1}-\mathbf{u}_h^n}{\tau},\tilde{\mathbf{u}}_h^{n+1}\right)-\frac{1}{2\rho^{n+1}}\left(\delta_{\tau}\mathbf{u}^{n+1},\mathbf{u}^{n+1}\right)-R_{\rho}^n+T_2^n&\notag\\
	+\frac{1}{\sqrt{E_{2h}^n}}\left(\mathbf{u}_h^n\cdot\nabla\mathbf{u}_h^n,\tilde{\mathbf{u}}_h^{n+1}\right)-\frac{1}{\sqrt{E_2^n}}\left(\mathbf{u}^n\cdot\nabla\mathbf{u}^n,\mathbf{u}^n\right)&=0.
\end{flalign}
Based on the above error equations, we can derive the error estimates for the phase field function $\phi$, the chemical potential $\mu$, the velocity $\mathbf{u}$ and pressure $p$. Before we get the optimal $L^2$ error estimates, we need to prove the following lemmas. To simplify the analysis later, we set the parameters $M,\lambda,\gamma,\nu$ to 1.
\begin{Lemma}\label{lemma_Theta_phi_H1}
	Suppose that the system \eqref{eq_1st_dis} has
	a unique solution $\left(\phi,\mu,\mathbf{u},p\right)$ and satisfies the regularity \eqref{eq_varibles_satisfied_regularities}, for all
	$m\geq 0$, we have
	$$
		\begin{aligned}
			\|\Theta_{\phi}^{m+1}\|_1^2&~+\tau\sum_{n=0}^{m}\|e_{\mu}^{n+1}\|_1^2\leq C\left(\tau^2+h^{2r}\right)\\
			&~+C\tau\sum_{n=0}^{m}\left(\left(\|\phi^n\|_2^2+\|\nabla\mathbf{u}_h^n\|^2+\|\nabla\mathbf{u}_h^n\|^2\|\phi_h^n\|_1^2\right)\left(\|\Theta_{\mathbf{u}}^n\|^2+|e_r^{n+1}|^2\right)\right),
		\end{aligned}
	$$
	where $C$ is positive constant that  does not depend on $\tau$ and $h$.
\end{Lemma}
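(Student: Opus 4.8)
The plan is to start from the phase-field and chemical-potential error equations \eqref{eq_error_equations_phi} and \eqref{eq_error_equations_mu}, and to split each error through the Ritz projection as $e_\phi^{n+1}=\Phi_\phi^{n+1}+\Theta_\phi^{n+1}$ and $e_\mu^{n+1}=\Phi_\mu^{n+1}+\Theta_\mu^{n+1}$. The decisive structural fact is the Ritz orthogonality $(\nabla(\psi-R_h\psi),\nabla\varphi_h)=0$, which makes every gradient cross term between a projection error $\Phi$ and a discrete test function vanish; in particular $(\nabla e_\mu^{n+1},\nabla\Theta_\mu^{n+1})=\|\nabla\Theta_\mu^{n+1}\|^2$ and $(\nabla e_\phi^{n+1},\nabla\delta_\tau\Theta_\phi^{n+1})=(\nabla\Theta_\phi^{n+1},\nabla\delta_\tau\Theta_\phi^{n+1})$. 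Mimicking the energy-stability argument, I would take $w_h=\tau\Theta_\mu^{n+1}$ in \eqref{eq_error_equations_phi} and $\varphi_h=\Theta_\phi^{n+1}-\Theta_\phi^n$ in \eqref{eq_error_equations_mu}, and subtract the two identities after suitable arrangement. The $L^2$ cross terms $(\Theta_\phi^{n+1}-\Theta_\phi^n,\Theta_\mu^{n+1})$ then cancel by symmetry, while the remaining terms produce the dissipation $\tau\|\nabla\Theta_\mu^{n+1}\|^2$ and the telescoping quantity $\frac{1}{2}(\|\nabla\Theta_\phi^{n+1}\|^2-\|\nabla\Theta_\phi^n\|^2+\|\nabla(\Theta_\phi^{n+1}-\Theta_\phi^n)\|^2)$, together with the analogous $\|\Theta_\phi\|^2$ difference coming from the $\gamma$-term.

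The hard part will be the nonlinear convection and SAV-coupling differences, namely $\frac{r^{n+1}}{\sqrt{E_1^{n+1}}}\mathbf{u}^{n+1}\cdot\nabla\phi^{n+1}-\frac{r_h^{n+1}}{\sqrt{E_{1h}^n}}\mathbf{u}_h^n\cdot\nabla\phi_h^n$ in \eqref{eq_error_equations_phi} and the corresponding $F'$ difference in \eqref{eq_error_equations_mu}. I would process these by repeated add-and-subtract, peeling off one factor at a time: the scalar-coefficient mismatch splits into a part carrying $e_r^{n+1}$ and a part carrying $E_1^{n+1}-E_{1h}^n$, where the latter is controlled using that $E_{1h}^n,E_1^{n+1}\ge C_1-\gamma>0$ are bounded below so that $1/\sqrt{\cdot}$ is Lipschitz, and where $E_1^{n+1}-E_{1h}^n$ is itself split into a temporal part of order $\tau$ and a field part $\Phi_\phi^n+\Theta_\phi^n$. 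The convection factors split into $e_{\mathbf{u}}^n=\Phi_{\mathbf{u}}^n+\Theta_{\mathbf{u}}^n$ and $e_\phi=\Phi_\phi+\Theta_\phi$ contributions. Using the a priori bounds of Lemma \ref{lemma_boundedness_E_phi_Deltaphi} (so that $\|\phi_h^n\|_1$ is bounded and $\tau\sum\|\nabla\mathbf{u}_h^n\|^2$ is finite), the projection estimates \eqref{eq_boundness_Ritz_0001}--\eqref{eq_boundness_Ritz_0004} and \eqref{eq_Stokes_operators_boundness_u}, the interpolation inequalities \eqref{eq_boundness_basic_inequalities_0001}--\eqref{eq_boundness_basic_inequalities_0004}, and Young's inequality, each piece is routed either into the truncation/projection budget $C(\tau^2+h^{2r})$, into a small multiple of the dissipation $\|\nabla\Theta_\mu^{n+1}\|^2$ or $\|\nabla\Theta_\phi^{n+1}\|^2$ to be absorbed, into a Grönwall term proportional to $\|\Theta_\phi^n\|_1^2$, or into one of the admissible weights multiplying $\|\Theta_{\mathbf{u}}^n\|^2+|e_r^{n+1}|^2$. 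Keeping careful track of the time-lag between the $(n+1)$-level coefficients and the $n$-level fields is the book-keeping that makes this step delicate.

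To recover the full norm $\tau\sum\|e_\mu^{n+1}\|_1^2$ rather than only the gradient dissipation, I would additionally test \eqref{eq_error_equations_mu} with $\varphi_h=\Theta_\mu^{n+1}$, which yields $\|\Theta_\mu^{n+1}\|^2$ expressed through $(\nabla\Theta_\phi^{n+1},\nabla\Theta_\mu^{n+1})$, the $F'$ difference, and the projection error $(\Phi_\mu^{n+1},\Theta_\mu^{n+1})$; combined with the already-bounded $\|\nabla\Theta_\mu^{n+1}\|$ and with $\|\Phi_\mu^{n+1}\|_1\le Ch^r\|\mu^{n+1}\|_{r+1}$ this controls $\|\Theta_\mu^{n+1}\|_1$ and hence $\|e_\mu^{n+1}\|_1$.

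Finally I would sum the combined inequality from $n=0$ to $m$, so that the $\|\nabla\Theta_\phi\|^2$ and $\|\Theta_\phi\|^2$ differences telescope to $\|\Theta_\phi^{m+1}\|_1^2$ (using $\Theta_\phi^0=0$, or absorbing the initial error into $C_0$), collect the truncation contributions bounded by $C\tau^2$ via Lemma \ref{lemma_truncation_boundedness} and the projection contributions bounded by $Ch^{2r}$, and apply the discrete Grönwall inequality (Lemma \ref{lemma_discrete_Gronwall_inequation}) to absorb the accumulated $\tau\sum\|\Theta_\phi^n\|_1^2$ terms. What remains on the right is exactly $C(\tau^2+h^{2r})$ plus the weighted $\|\Theta_{\mathbf{u}}^n\|^2+|e_r^{n+1}|^2$ sum, which are left untouched here since they are estimated in the subsequent lemmas for the velocity and the auxiliary variables. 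The main obstacle throughout is the consistent splitting and absorption of the nonlinear, time-lagged convection and SAV-coupling terms; everything else is telescoping, projection estimates, and Grönwall.
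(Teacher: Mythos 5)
Your overall architecture matches the paper's: the paper takes $w_h=2\tau\Theta_{\mu}^{n+1}+2\tau\Theta_{\phi}^{n+1}$ in \eqref{eq_error_equations_phi} and $\varphi_h=2\tau(\Theta_{\mu}^{n+1}-\delta_{\tau}\Theta_{\phi}^{n+1})$ in \eqref{eq_error_equations_mu}, cancels the cross terms, telescopes $\|\Theta_{\phi}\|_1^2$, peels the SAV coefficients apart exactly as you describe (via \eqref{eq_boundedness_Tb2_00003_E1_boundedness_term}), and closes with Gr\"{o}nwall; your extra test with $\varphi_h=\Theta_{\mu}^{n+1}$ to recover $\|e_{\mu}^{n+1}\|_1^2$ is only a cosmetic variant of the paper's augmented test functions, which produce $\tau(\|\Theta_{\mu}^{n+1}\|_1^2+\|e_{\mu}^{n+1}\|_1^2-\|\Phi_{\mu}^{n+1}\|_1^2)$ directly on the left.

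However, there is one genuine gap: you never say how the $L^2$-pairings against $\delta_{\tau}\Theta_{\phi}^{n+1}$ are closed, and the routes you list do not suffice. After testing \eqref{eq_error_equations_mu} with $\Theta_{\phi}^{n+1}-\Theta_{\phi}^{n}$ you are left with terms of the form $\bigl(G^n,\Theta_{\phi}^{n+1}-\Theta_{\phi}^{n}\bigr)$ where $G^n$ is the $F'$-difference, $e_{\phi}^{n+1}$, or $\Phi_{\mu}^{n+1}$. The naive Cauchy--Schwarz bound $\|G^n\|\bigl(\|\Theta_{\phi}^{n+1}\|+\|\Theta_{\phi}^{n}\|\bigr)$ carries no factor of $\tau$, so after summation it produces $\sum_{n}\|G^n\|^2\sim\tau^{-1}\bigl(\tau^2+h^{2(r+1)}+\tau\sum_n\|\Theta_{\phi}^n\|^2+\cdots\bigr)$, which both destroys the unconditional rate (you would need $\tau\gtrsim h^{2}$ to salvage $h^{2r}$) and puts a Gr\"{o}nwall coefficient $d_n\sim\tau^{-1}$ in front of $\|\Theta_{\phi}^n\|^2$, for which $\exp(\tau\sum_n\sigma_nd_n)$ is not bounded. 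There is also no dissipation term $\tau\|\nabla\Theta_{\phi}^{n+1}\|^2$ on the left into which these could be absorbed, contrary to what your plan suggests. The paper's device is to first derive an $H^{-1}$ bound for $\delta_{\tau}\Theta_{\phi}^{n+1}$ by reading it off the error equation \eqref{eq_error_equations_phi} itself (see \eqref{eq_delta_tau_Theta_phi_negative_norm_boundedness}--\eqref{eq_delta_tau_Theta_phi_Hneg_boundedness}), namely $\|\delta_{\tau}\Theta_{\phi}^{n+1}\|_{-1}\leq C\|\nabla e_{\mu}^{n+1}\|+\|R_{\phi}^n\|_{-1}+\cdots$, and then to estimate $\tau\bigl(G^n,\delta_{\tau}\Theta_{\phi}^{n+1}\bigr)\leq\tau\|\nabla G^n\|\,\|\delta_{\tau}\Theta_{\phi}^{n+1}\|_{-1}$; this restores the factor $\tau$, lets the $\tau\|\nabla e_{\mu}^{n+1}\|^2$ dissipation absorb the leading piece, and is what all of $Tb_3$, $Tb_4$ and $Tb_6$ rely on. Without this step (or an equivalent summation-by-parts in time), your argument cannot reach the stated bound, so you should add it explicitly.
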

\begin{proof}
	By taking $w_h=2\tau\Theta_{\mu}^{n+1}+2\tau\Theta_{\phi}^{n+1}$ in \eqref{eq_error_equations_phi}, $\varphi_h=2\tau\left(\Theta_{\mu}^{n+1}-\delta_{\tau}\Theta_{\phi}^{n+1}\right)$ in \eqref{eq_error_equations_mu}, respectively,  adding the results obtained, we have
	\begin{equation}
		\label{eq_adding_results}
		\begin{aligned}
			&\|\Theta_{\phi}^{n+1}\|_1^2-\|\Theta_{\phi}^n\|_1^2+\|\Theta_{\phi}^{n+1}-\Theta_{\phi}^{n}\|_1^2+\tau\left(\|\Theta_{\mu}^{n+1}\|_1^2+\|e_{\mu}^{n+1}\|_1^2-\|\Phi_{\mu}^{n+1}\|_1^2\right)\\
			=&~2\tau\left(R_{\phi}^n,\Theta_{\mu}^{n+1}+\Theta_{\phi}^{n+1}\right)-2\tau\left(\frac{r^{n+1}}{\sqrt{E_1^{n+1}}}\mathbf{u}^{n+1}\cdot\nabla\phi^{n+1}-\frac{r_h^{n+1}}{\sqrt{E_{1h}^{n}}}\mathbf{u}_h^n\cdot\nabla\phi_h^n,\Theta_{\mu}^{n+1}+\Theta_{\phi}^{n+1}\right)\\
			&~+2\tau\left(\frac{ r^{n+1}}{\sqrt{E_1^{n+1}}}F'(\phi^{n+1})-\frac{ r_h^{n+1}}{\sqrt{E_{1h}^{n}}}F'(\phi_h^{n}),\Theta_{\mu}^{n+1}-\delta_{\tau}\Theta_{\phi}^{n+1}\right)
			-2\tau\left(e_{\phi}^{n+1},\Theta_{\mu}^{n+1}-\delta_{\tau}\Theta_{\phi}^{n+1}\right)\\
			&~-2\tau\left(\delta_{\tau}\Phi_{\phi}^{n+1},\Theta_{\mu}^{n+1}+\Theta_{\phi}^{n+1}\right)+2\tau\left(\Phi_{\mu}^{n+1},\delta_{\tau}\Theta_{\phi}^{n+1}\right)
			=\sum_{i=1}^{6}Tb_i.
		\end{aligned}
	\end{equation}
	Now, we estimate the right-hand side terms of \eqref{eq_adding_results},
	\begin{equation}
		\label{eq_Tb1_boundedness_term}
		\begin{aligned}
			\big|Tb_1\big|=\big|2\tau\left(R_{\phi}^n,\Theta_{\mu}^{n+1}+\Theta_{\phi}^{n+1}\right)\big|\leq C\tau\|R_{\phi}^n\|_{-1}^2+\frac{\tau}{10}\left(\|\Theta_{\mu}^{n+1}\|_1^2+\|\Theta_{\phi}^{n+1}\|_1^2\right).
		\end{aligned}
	\end{equation}
	According to $r^{n+1}=\sqrt{E_1^{n+1}}$, we have
	\begin{equation}
		\label{eq_Tb2_boundedness_term}
		\begin{aligned}
			\big|Tb_2\big|=&~\bigg|-2\tau\left(\frac{r^{n+1}}{\sqrt{E_1^{n+1}}}\mathbf{u}^{n+1}\cdot\nabla\phi^{n+1}-\frac{r_h^{n+1}}{\sqrt{E_{1h}^{n}}}\mathbf{u}_h^n\cdot\nabla\phi_h^n,\Theta_{\mu}^{n+1}+\Theta_{\phi}^{n+1}\right)\bigg|\\
			=&~2\tau\bigg|\left(\mathbf{u}^{n+1}\cdot\nabla\phi^{n+1}-\mathbf{u}^{n}\cdot\nabla\phi^{n},\Theta_{\mu}^{n+1}+\Theta_{\phi}^{n+1}\right)\\
			&~+\left(\mathbf{u}^{n}\cdot\nabla\phi^{n}-\mathbf{u}_h^{n}\cdot\nabla\phi_h^{n},\Theta_{\mu}^{n+1}+\Theta_{\phi}^{n+1}\right)\\
			&~+\left(\frac{r_h^{n+1}}{\sqrt{E_1^{n+1}}}-\frac{r_h^{n+1}}{\sqrt{E_{1h}^n}}+\frac{e_{r}^{n+1}}{\sqrt{E_{1}^{n+1}}}\right)\left(\mathbf{u}_h^{n}\cdot\nabla\phi_h^{n},\Theta_{\mu}^{n+1}+\Theta_{\phi}^{n+1}\right)\bigg|.
		\end{aligned}
	\end{equation}
	Using the inequalities \eqref{eq_boundness_basic_inequalities_0001}-\eqref{eq_boundness_basic_inequalities_0004}, we can estimate 
	the each term of $Tb_2$,
	\begin{equation}
		\label{eq_boundedness_Tb2_00001}
		\begin{aligned}
			&~2\tau\bigg|\left(\mathbf{u}^{n+1}\cdot\nabla\phi^{n+1}-\mathbf{u}^{n}\cdot\nabla\phi^{n},\Theta_{\mu}^{n+1}+\Theta_{\phi}^{n+1}\right)\bigg|\\
			=&~2\tau\bigg|\left(\left(\mathbf{u}^{n+1}-\mathbf{u}^{n}\right)\cdot\nabla\phi^{n+1}+\mathbf{u}^n\cdot\nabla\left(\phi^{n+1}-\phi^n\right),\Theta_{\mu}^{n+1}+\Theta_{\phi}^{n+1}\right)\bigg|\\
			\leq &~\frac{\tau}{10}\left(\|\Theta_{\mu}^{n+1}\|_1^2+\|\Theta_{\phi}^{n+1}\|_1^2\right)+C\tau^3\left(\|\phi^{n+1}\|_2^2\|\mathbf{u}_t\|_{L^{\infty}(0,T;L^2(\Omega))}^2+\|A\mathbf{u}^n\|^2\|\phi_t\|_{L^{\infty}(0,T;H^1(\Omega))}^2\right),
		\end{aligned}
	\end{equation}
	and
		\begin{equation}
		\label{eq_boundedness_Tb2_00002}
		\begin{aligned}	
			&~2\tau\bigg|\left(\mathbf{u}^{n}\cdot\nabla\phi^{n}-\mathbf{u}_h^{n}\cdot\nabla\phi_h^{n},\Theta_{\mu}^{n+1}+\Theta_{\phi}^{n+1}\right)\bigg|\\
			=&~2\tau\bigg|\left(\left(\mathbf{u}^{n}-\mathbf{u}_h^{n}\right)\cdot\nabla\phi^{n}+\mathbf{u}_h^n\cdot\nabla\left(\phi^n-\phi_h^n\right),\Theta_{\mu}^{n+1}+\Theta_{\phi}^{n+1}\right)\bigg|\\
			\leq&~\frac{\tau}{10}\left(\|\Theta_{\mu}^{n+1}\|_1^2+\|\Theta_{\phi}^{n+1}\|_1^2\right)+C\tau\|\phi^n\|_2^2\left(\|\Phi_{\mathbf{u}}^{n+1}\|^2+\|\Theta_{\mathbf{u}}^{n+1}\|^2\right)\\
			&~+\|\nabla\mathbf{u}_h^n\|^2\left(\|\nabla\Phi_{\phi}^{n}\|^2+\|\nabla\Theta_{\phi}^n\|^2\right).
		\end{aligned}
		\end{equation}
	And then, we have
	\begin{equation}
		\label{eq_boundedness_Tb2_00003_E1_boundedness_term}
		\begin{aligned}
			\frac{1}{\sqrt{E_1^{n+1}}}-\frac{1}{\sqrt{E_{1h}^n}}
			=&~\frac{|E_{1h}^n-E_1^{n+1}|}{\sqrt{E_1^{n+1}E_{1h}^n}\left(\sqrt{E_1^{n+1}}+\sqrt{E_{1h}^n}\right)}\\
			=&~\frac{|E_{1h}^n-E_1^n+E_1^n-E_1^{n+1}|}{\sqrt{E_1^{n+1}E_{1h}^n}\left(\sqrt{E_1^{n+1}}+\sqrt{E_{1h}^n}\right)}
			\leq C\left(\|e_{\phi}^n\| +\tau\|\phi_t\|_{L^{\infty}(0,T;H^1(\Omega))}\right).
		\end{aligned}
	\end{equation}
	Thus, we obtain
	\begin{equation}
		\label{eq_boundedness_Tb2_00003}
		\begin{aligned}
			&2\tau\bigg|\left(\frac{r_h^{n+1}}{\sqrt{E_1^{n+1}}}-\frac{r_h^{n+1}}{\sqrt{E_{1h}^n}}+\frac{e_{r}^{n+1}}{\sqrt{E_{1}^{n+1}}}\right)\left(\mathbf{u}_h^{n}\cdot\nabla\phi_h^{n},\Theta_{\mu}^{n+1}+\Theta_{\phi}^{n+1}\right)\bigg|\\
			\leq ~&\left(C\tau\|e_{\phi}^n\|+C\tau+\frac{2\tau|e_r^{n+1}|}{\sqrt{C_0}}\right)\|\nabla\mathbf{u}_h^n\|\|\nabla\phi_h^n\|\left(\|\Theta_{\mu}^{n+1}\|_1+\|\Theta_{\phi}^{n+1}\|_1\right)\\
			\leq ~&\frac{\tau}{10}\left(\|\Theta_{\mu}^{n+1}\|_1^2+\|\Theta_{\phi}^{n+1}\|_1^2\right)+C\tau\|\nabla\mathbf{u}_h^n\|^2\|\nabla\phi_h^n\|^2\left(\|\Theta_{\phi}^n\|^2+\Phi_{\phi}^n\|^2\right)\\
			&+C\tau\|\nabla\mathbf{u}_h^n\|^2\|\nabla\phi_h^n\|^2|e_r^{n+1}|^2.
		\end{aligned}
	\end{equation}
	According to \cite{2020_Chen_Hongtao_Optimal_error_estimates_for_the_scalar_auxiliary_variable_finite_element_schemes_for_gradient_flows},
	we konw that the following inequalities hold,
	\begin{flalign}
		\label{eq_nonlinear_term_f_boundedness0001}
		\|F'(\phi^{n+1})-F'(\phi_h^{n+1})\|\leq C\|\phi^{n+1}-\phi_h^{n+1}\|=\|e_{\phi}^{n+1}\|,\\
		\label{eq_nonlinear_term_f_boundedness0002}
		\|\nabla F'(\phi^{n+1})-\nabla F'(\phi_h^{n+1})\|\leq C\left(\|e_{\phi}^{n+1}\|+\|\nabla e_{\phi}^{n+1}\|\right).
	\end{flalign}
	Recalling the \eqref{eq_error_equations_phi}, we recast
	\begin{equation}
		\label{eq_delta_tau_Theta_phi_negative_norm_boundedness}
		\begin{aligned}
			\delta_{\tau}\Theta_{\phi}^{n+1}=&~\Delta_he_{\mu}^{n+1}-\delta_{\tau}\Phi_{\phi}^{n+1}-\frac{r^{n+1}}{\sqrt{E_1^{n+1}}}\mathbf{u}^{n+1}\cdot\nabla\phi^{n+1}+\frac{r_h^{n+1}}{\sqrt{E_{1h}^{n}}}\mathbf{u}_h^n\cdot\nabla\phi_h^n+
			R_{\phi}^n.
		\end{aligned}
	\end{equation}
	Thus, we obtain the negative norm of \eqref{eq_delta_tau_Theta_phi_negative_norm_boundedness} as follows,
	\begin{equation}
		\label{eq_delta_tau_Theta_phi_Hneg_boundedness}
		\begin{aligned}
			\|\delta_{\tau}\Theta_{\phi}^{n+1}\|_{-1}\leq &~C\|\nabla e_{\mu}^{n+1}\|+\|R_{\phi}^n\|_{-1}+Ch^{r+1}\|\phi_t\|_{H^r}\\
			&~+C\tau\left(\|\phi^{n+1}\|_2\|\mathbf{u}_{t}\|_{L^{\infty}(0,T;L^2(\Omega))}+\|A\mathbf{u}^n\|\|\phi_t\|_{L^{\infty}(0,T;H^1(\Omega))}\right)\\
			&~+C\tau\left(\|\phi^n\|_2+\|\nabla\mathbf{u}_h^n\|+\|\nabla\mathbf{u}_h^n\|\|\phi_h^n\|_1\right)
			\left(\|\Theta_{\mathbf{u}}^n\|+\|\Phi_{\mathbf{u}}^n\|+\|\Phi_{\phi}^n\|_1+\|\Theta_{\phi}^n\|_1+|e_r^{n+1}|\right).
		\end{aligned}
	\end{equation}
	Using the \eqref{eq_boundness_Ritz_0004}, \eqref{eq_boundedness_Tb2_00003_E1_boundedness_term}, \eqref{eq_nonlinear_term_f_boundedness0001}-\eqref{eq_nonlinear_term_f_boundedness0002} and Lemma \ref{lemma_Laplace_opreator}, we have
	\begin{flalign}
		\big|Tb_3\big|=&~\bigg|2\tau\left(\frac{ r^{n+1}}{\sqrt{E_1^{n+1}}}F'(\phi^{n+1})-\frac{ r_h^{n+1}}{\sqrt{E_{1h}^{n}}}F'(\phi_h^{n}),\Theta_{\mu}^{n+1}-\delta_{\tau}\Theta_{\phi}^{n+1}\right)\bigg|\notag\\
		=&~2\tau\bigg|\left(F'(\phi^{n+1})-F'(\phi^n)+F'(\phi^n)-\frac{ r_h^{n+1}}{\sqrt{E_{1h}^{n}}}F'(\phi_h^{n}),\Theta_{\mu}^{n+1}-\delta_{\tau}\Theta_{\phi}^{n+1}\right)\bigg|\notag\\
		=&~2\tau\bigg|\left(F'(\phi^{n+1})-F'(\phi^n)+F'(\phi^n)-F'(\phi_h^n),\Theta_{\mu}^{n+1}-\delta_{\tau}\Theta_{\phi}^{n+1}\right)\notag\\
		&~+\left(\frac{e_r^{n+1}}{\sqrt{E_1^{n+1}}}+\frac{r_h^{n+1}}{\sqrt{E_1^{n+1}}}-\frac{r_h^{n+1}}{\sqrt{E_{1h}^n}}\right)\left(F'(\phi_h^n),\Theta_{\mu}^{n+1}-\delta_{\tau}\Theta_{\phi}^{n+1}\right)\bigg|\notag\\
		\leq &~C\tau\left(\|F'(\phi^{n+1})-F'(\phi^n)\|+\|F'(\phi^n)-F'(\phi_h^n)\|\right)\|\Theta_{\mu}^{n+1}\|\notag\\
		\label{eq_boundedness_Tb3_terms}
		&~+C\tau\left(\|\nabla\left(F'(\phi^{n+1})-F'(\phi^n)\right)\|+\|\nabla\left(F'(\phi^n)-F'(\phi_h^n)\right)\|\right)\|\delta_{\tau}\Theta_{\phi}^{n+1}\|_{-1}\\
		&~+\left(\frac{2\tau|e_r^{n+1}|}{\sqrt{C_0}}+C\tau\|e_{\phi}^n\|+C\tau\right)\left(\|F'(\phi_h^n)\|\|\Theta_{\mu}^{n+1}\|+\|\nabla F'(\phi_h^n)\|\|\delta_{\tau}\Theta_{\phi}^{n+1}\|_{-1}\right)\notag\\
		\leq &~C\tau\left(\tau\|\phi_t\|_{L^{\infty}(0,T;L^2(\Omega))}+\|e_{\phi}^{n+1}\|\right)\|\Theta_{\mu}^{n+1}\|\notag\\
		&~+C\tau\left(\tau\|\phi_t\|_{L^{\infty}(0,T;L^2(\Omega))}+\|e_{\phi}^{n+1}\|_1\right)\|\delta_{\tau}\Theta_{\phi}^{n+1}\|_{-1}\notag\\
		&~+\left(\frac{2\tau|e_r^{n+1}|}{\sqrt{C_0}}+C\tau\|e_{\phi}^n\|+C\tau\right)\left(\|F'(\phi_h^n)\|\|\Theta_{\mu}^{n+1}\|+\|\nabla F'(\phi_h^n)\|\|\delta_{\tau}\Theta_{\phi}^{n+1}\|_{-1}\right)\notag\\
		\leq &~\frac{\tau}{2}\|\Theta_{\mu}^{n+1}\|^2+\frac{\tau}{2}\|e_{\mu}^{n+1}\|^2+C\tau\|R_{\phi}^n\|_{-1}^2+Ch^{2(r+1)}\|\phi_t\|_{H^r}^2\notag\\
		&~+C\tau^3\left(\|\mathbf{u}_t\|_{L^{\infty}(0,T;L^2(\Omega))}^2\|\phi^{n+1}\|_2^2+\|\phi_t\|_{L^{\infty}(0,T;H^1(\Omega))}^2+\|A\mathbf{u}^n\|^2\|\phi_t\|_{L^{\infty}(0,T;H^1(\Omega))}^2\right)\notag\\
		&~+C\tau\left(\|\phi^n\|_2^2+\|\nabla\mathbf{u}_h^n\|^2+\|\nabla\mathbf{u}_h^n\|^2\|\phi_h^n\|_1^2\right)\left(\|\Theta_{\mathbf{u}}^n\|^2+\|\Phi_{\mathbf{u}}^n\|^2+\|\Phi_{\phi}^n\|_1^2+\|\Theta_{\phi}^n\|_1^2+|e_r^{n+1}|^2\right).\notag
	\end{flalign}
	We next have error estimate of $Tb_4$ as follows
	\begin{equation}
		\label{eq_boundedness_Tb4_terms}
		\begin{aligned}
			\big|Tb_4\big|=&~\bigg|-2\tau\left(e_{\phi}^{n+1},\Theta_{\mu}^{n+1}-\delta_{\tau}\Theta_{\mu}^{n+1}\right)\bigg|\\
			\leq &~C\tau\left(\|\Phi_{\phi}^{n+1}\|+\|\Theta_{\phi}^{n+1}\|\right)\|\Theta_{\mu}^{n+1}\|+C\tau\left(\|\Phi_{\phi}^{n+1}\|_1+\|\Theta_{\phi}^{n+1}\|_1\right)\|\delta_{\tau}\Theta_{\mu}^{n+1}\|_{-1}\\
			\leq &~\frac{\tau}{2}\|\Theta_{\mu}^{n+1}\|^2+C\tau\left(\|\Phi_{\phi}^{n+1}\|^2+\|\Theta_{\phi}^{n+1}\|^2+\|\Phi_{\phi}^{n+1}\|_1^2
			+\|\Theta_{\phi}^{n+1}\|_1^2+\|\delta_{\tau}\Theta_{\mu}^{n+1}\|_{-1}^2\right).
		\end{aligned}
	\end{equation}
According to the \eqref{eq_boundness_Ritz_0002} and Lemma \ref{lemma_Laplace_opreator}, we have error estimate of $Tb_5$ as follows
\begin{equation}
	\label{eq_boundedness_Tb5_terms}
	\begin{aligned}
		\big|Tb_5\big|=&~\big|-2\tau\left(\delta_{\tau}\Phi_{\phi}^{n+1},\Theta_{\mu}^{n+1}+\Theta_{\phi}^{n+1}\right)\big|
		\leq C\tau\left(\|\Theta_{\mu}^{n+1}\|_1+\|\Theta_{\phi}^{n+1}\|_1\right)\|\delta_{\tau}\Phi_{\phi}^{n+1}\|_{-1}\\
		\leq&~\frac{\tau}{10}\left(\|\Theta_{\mu}^{n+1}\|_1^2+\|\Theta_{\phi}^{n+1}\|_1^2\right)+C\tau\|\delta_{\tau}\Phi_{\phi}^{n+1}\|_{-1}^2,
	\end{aligned}
\end{equation}
and
\begin{equation}
	\label{eq_boundedness_Tb6_terms}
	\begin{aligned}
		\big|Tb_6\big|=&~\big|2\tau\left(\Phi_{\mu}^{n+1},\delta_{\tau}\Theta_{\phi}^{n+1}\right)\big|
		\leq C\tau\|\Phi_{\mu}^{n+1}\|_1\|\delta_{\tau}\Theta_{\phi}^{n+1}\|_{-1}\\
		\leq &~\frac{\tau}{2}\|\nabla e_{\mu}^{n+1}\|^2+C\tau\left(\|\Phi_{\mu}^{n+1}\|_1^2+\|R_{\phi}^n\|_{-1}^2\right)+Ch^{2(r+1)}\|\phi_t\|_{H^r}^2\\
		&~+C\tau^3\left(\|\mathbf{u}_t\|_{L^{\infty}(0,T;L^2(\Omega))}^2\|\phi^{n+1}\|_2^2+\|\phi_t\|_{L^{\infty}(0,T;H^1(\Omega))}^2+\|A\mathbf{u}^n\|^2\|\phi_t\|_{L^{\infty}(0,T;H^1(\Omega))}^2\right)\\
		&~+C\tau\left(\|\phi^n\|_2^2+\|\nabla\mathbf{u}_h^n\|^2+\|\nabla\mathbf{u}_h^n\|^2\|\phi_h^n\|_1^2\right)\left(\|\Theta_{\mathbf{u}}^n\|^2+\|\Phi_{\mathbf{u}}^n\|^2+\|\Phi_{\phi}^n\|_1^2+\|\Theta_{\phi}^n\|_1^2+|e_r^{n+1}|^2\right).
	\end{aligned}
\end{equation}
Substituting the \eqref{eq_Tb1_boundedness_term}-\eqref{eq_boundedness_Tb6_terms} into \eqref{eq_adding_results}, summing from $n= 0$ to $m$, we have
\begin{equation}
	\label{eq_combining_Tb1_Tb6_resluts_summing}
	\begin{aligned}
		&\|\Theta_{\phi}^{m+1}\|_1^2+\frac{\tau}{2}\sum_{n=0}^{m}\|e_{\mu}^{n+1}\|_1^2+\frac{\tau}{2}\sum_{n=0}^{m}\|\Theta_{\mu}^{n+1}\|_1^2\leq \|\Theta_{\phi}^0\|_1^2+Ch^{2(r+1)}\|\phi_t\|_{H^r}^2\\
		&
		+C\tau\sum_{n=0}^{m}\left(\|\Theta_{\mu}^{n+1}\|^2+\|\Theta_{\phi}^{n+1}\|_1^2+\|R_{\phi}^n\|_{-1}^2\right)\\
		&+C\tau^3\sum_{n=0}^{m}\left(\|\mathbf{u}_t\|_{L^{\infty}(0,T;L^2(\Omega))}^2\|\phi^{n+1}\|_2^2+\|\phi_t\|_{L^{\infty}(0,T;H^1(\Omega))}^2+\|A\mathbf{u}^n\|^2\|\phi_t\|_{L^{\infty}(0,T;H^1(\Omega))}^2\right)\\
		&+C\tau\sum_{n=0}^{m}\left(\|\phi^n\|_2^2+\|\nabla\mathbf{u}_h^n\|^2+\|\nabla\mathbf{u}_h^n\|^2\|\phi_h^n\|_1^2\right)\left(\|\Theta_{\mathbf{u}}^n\|^2+\|\Phi_{\mathbf{u}}^n\|^2+\|\Phi_{\phi}^n\|_1^2+\|\Theta_{\phi}^n\|_1^2+|e_r^{n+1}|^2\right).
	\end{aligned}
\end{equation}
Using the inequalities \eqref{eq_boundness_Ritz_0001}, \eqref{eq_boundness_Ritz_0003}, Lemmas \ref{lemma_boundedness_E_phi_Deltaphi}, \ref{lemma_truncation_boundedness} and the  discrete Gr\"{o}nwall Lemma \ref{lemma_discrete_Gronwall_inequation}, we obtain the result of the Lemma \ref{lemma_Theta_phi_H1}.
\end{proof}
\begin{Lemma}
	\label{lemma_Theta_u_p_L2}
	Under the assumption of \eqref{eq_varibles_satisfied_regularities}, for all $m\geq 0$, and $e_{\mathbf{u}}^0=e_p^0=0$, we have
	\begin{flalign}
		&\|\Theta_{\mathbf{u}}^{m+1}\|^2+\tau^2\|\nabla\Theta_p^{m+1}\|^2+\tau\sum_{n=0}^{m}\|\nabla\Theta_{\tilde{\mathbf{u}}}^{n+1}\|^2\notag\\
		\leq &~C\left(\tau^2+h^{2r}+\tau\sum_{n=0}^{m}\|e_{\mu}^n\|_1^2\right)+C\tau\sum_{n=0}^{m}\left(\|A\mathbf{u}^{n+1}\|^2+\|A\mathbf{u}^n\|^2+\|\nabla\phi^n\|^2+\|\mu^n\|_1^2\right)\notag\\
		&+C\tau\sum_{n=0}^{m}\left(\|A\mathbf{u}^n\|^2+\|\nabla\mathbf{u}^n\|^2+\|\nabla\mathbf{u}_h^n\|^2\right)\left(\|\Phi_{\mathbf{u}}^{n}\|^2+\|\Theta_{\mathbf{u}}^{n}\|^2\right)\notag\\
		&+C\tau\sum_{n=0}^{m}\left(\|\mathbf{u}_h^n\|^2\|\nabla\mathbf{u}_h^n\|^2\right)\left(|e_{\rho}^{n+1}|^2+\|\Phi_{\mathbf{u}}^n\|^2+\|\Theta_{\mathbf{u}}^n\|^2\right)\notag\\
		&~+C\tau\sum_{n=0}^{m}\left(\|\phi^n\|_2^2+\|\mu_h^n\|_1^2\left(\|\nabla \Phi_{\phi}^n\|^2+\|\nabla\Theta_{\phi}^n\|^2\right)\right)\notag\\
		&~+C\tau\sum_{n=0}^{m}\|\mu_h^n\|_1^2\|\nabla\phi_h^n\|^2\left(|e_{r}^{n+1}|^2+\|\Phi_{\phi}^n\|^2+\|\Theta_{\phi}^n\|^2\right),\notag
	\end{flalign}
	where $C$ is positive constant that  does not depend on $\tau$ and $h$.
\end{Lemma}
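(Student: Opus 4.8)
The plan is to run the \emph{error analogue} of the discrete energy-stability computation in Theorem 2.1, now driven by the momentum and pressure-correction error equations \eqref{eq_error_equations_u}--\eqref{eq_error_equations_rho} and the Stokes-projection splitting $e_{\tilde{\mathbf{u}}}^{n+1}=\Phi_{\mathbf{u}}^{n+1}+\Theta_{\tilde{\mathbf{u}}}^{n+1}$, $e_{\mathbf{u}}^{n+1}=\Phi_{\mathbf{u}}^{n+1}+\Theta_{\mathbf{u}}^{n+1}$, $e_p^{n+1}=\Phi_p^{n+1}+\Theta_p^{n+1}$. First I would take $\mathbf{v}_h=2\tau\Theta_{\tilde{\mathbf{u}}}^{n+1}$ in \eqref{eq_error_equations_u}. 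Writing the discrete time-derivative term as $2(\Theta_{\tilde{\mathbf{u}}}^{n+1}-\Theta_{\mathbf{u}}^{n},\Theta_{\tilde{\mathbf{u}}}^{n+1})+2\tau(\delta_\tau\Phi_{\mathbf{u}}^{n+1},\Theta_{\tilde{\mathbf{u}}}^{n+1})$ and invoking the polarization identity $2(a-b,a)=\|a\|^2-\|b\|^2+\|a-b\|^2$ produces the telescoping quantity $\|\Theta_{\tilde{\mathbf{u}}}^{n+1}\|^2-\|\Theta_{\mathbf{u}}^{n}\|^2+\|\Theta_{\tilde{\mathbf{u}}}^{n+1}-\Theta_{\mathbf{u}}^{n}\|^2$, while the projection-derivative remainder is absorbed by Lemma \ref{lemma_Stokes_projection_bound}. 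The viscous term yields $2\nu\tau\|\nabla\Theta_{\tilde{\mathbf{u}}}^{n+1}\|^2$ once \eqref{eq_Stokes_projection_0001}--\eqref{eq_Stokes_projection_0002} are used to discard the projection part, and the pressure coupling $-2\tau(e_p^n,\nabla\cdot\Theta_{\tilde{\mathbf{u}}}^{n+1})$ is retained for the correction step below.

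The decisive algebraic step is the recovery of $\|\Theta_{\mathbf{u}}^{m+1}\|^2+\tau^2\|\nabla\Theta_p^{m+1}\|^2$ from the intermediate velocity, exactly as in \eqref{eq_u_dp_0325}--\eqref{eq_recasted_byeq_u_p_0326}. Recasting the error pressure-correction relation \eqref{eq_error_equations_u_p_splitting} as $\Theta_{\mathbf{u}}^{n+1}+\tau\nabla\Theta_p^{n+1}=\Theta_{\tilde{\mathbf{u}}}^{n+1}+\tau\nabla\Theta_p^{n}+\tau R_p^n+(\text{projection residual})$, squaring in $L^2$, and using the discrete incompressibility \eqref{eq_error_equations_incompressible_condition} to annihilate the cross term $(\nabla\Theta_p^{n+1},\Theta_{\mathbf{u}}^{n+1})$, I obtain the balance between $\|\Theta_{\mathbf{u}}^{n+1}\|^2+\tau^2\|\nabla\Theta_p^{n+1}\|^2$ and $\|\Theta_{\tilde{\mathbf{u}}}^{n+1}\|^2+\tau^2\|\nabla\Theta_p^{n}\|^2$ with residual controlled by $\|R_p^n\|$ (Lemma \ref{lemma_truncation_boundedness}) and Stokes-projection errors. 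Combining this with the momentum identity, the pressure cross term $2\tau(\nabla\Theta_p^n,\Theta_{\tilde{\mathbf{u}}}^{n+1})$ cancels and the $\tilde{\mathbf{u}}$-telescope becomes the clean $\mathbf{u}$/$p$-telescope appearing on the left-hand side.

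The main labor---and the chief obstacle---is bounding the two nonlinear couplings on the right of \eqref{eq_error_equations_u}. For the convection difference $\frac{\rho^{n+1}}{\sqrt{E_2^{n+1}}}\mathbf{u}^{n+1}\!\cdot\!\nabla\mathbf{u}^{n+1}-\frac{\rho_h^{n+1}}{\sqrt{E_{2h}^n}}\mathbf{u}_h^n\!\cdot\!\nabla\mathbf{u}_h^n$ I would use $\rho^{n+1}=\sqrt{E_2^{n+1}}$ and insert intermediate terms to split it into (i) a temporal part $\mathbf{u}^{n+1}\!\cdot\!\nabla\mathbf{u}^{n+1}-\mathbf{u}^{n}\!\cdot\!\nabla\mathbf{u}^{n}$, whose Taylor expansion contributes the consistency terms carrying $\|A\mathbf{u}^{n+1}\|^2$ and $\|A\mathbf{u}^n\|^2$; (ii) a discretization part $\mathbf{u}^{n}\!\cdot\!\nabla\mathbf{u}^{n}-\mathbf{u}_h^{n}\!\cdot\!\nabla\mathbf{u}_h^{n}$ rewritten through $\Phi_{\mathbf{u}}^n,\Theta_{\mathbf{u}}^n$ with the help of \eqref{eq_boundness_basic_inequalities_0001}; and (iii) an SAV-coefficient part, where $\tfrac{1}{\sqrt{E_2^{n+1}}}-\tfrac{1}{\sqrt{E_{2h}^n}}$ is estimated as in \eqref{eq_boundedness_Tb2_00003_E1_boundedness_term} and the numerator split $\rho_h^{n+1}=\rho^{n+1}-e_\rho^{n+1}$ produces the $|e_\rho^{n+1}|^2$ terms multiplied by $\|\mathbf{u}_h^n\|^2\|\nabla\mathbf{u}_h^n\|^2$. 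The phase-coupling difference $\frac{r^{n+1}}{\sqrt{E_1^{n+1}}}\mu^{n+1}\nabla\phi^{n+1}-\frac{r_h^{n+1}}{\sqrt{E_{1h}^n}}\mu_h^n\nabla\phi_h^n$ is treated identically, using $r^{n+1}=\sqrt{E_1^{n+1}}$, the $H^1$-stability of $\mu_h^n$ from Lemma \ref{lemma_boundedness_E_phi_Deltaphi}, and \eqref{eq_boundness_basic_inequalities_0003}; this is precisely where the $\|\mu_h^n\|_1^2$, $|e_r^{n+1}|^2$ and $\nabla\Theta_\phi^n$ contributions enter. Crucially, the errors $e_\rho^{n+1},e_r^{n+1}$ (and the $\|e_\mu^n\|_1$ term fed in from Lemma \ref{lemma_Theta_phi_H1}) are \emph{not} closed here but left on the right-hand side for the subsequent coupled argument.

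Finally, after summing from $n=0$ to $m$ and using $e_{\mathbf{u}}^0=e_p^0=0$, every factor multiplying $\|\Theta_{\mathbf{u}}^n\|^2$ has the form $\tau\,d_n$ with $\tau\sum_n d_n\le C$ by the regularity \eqref{eq_varibles_satisfied_regularities} and the stability bounds of Lemma \ref{lemma_boundedness_E_phi_Deltaphi}, so the discrete Gr\"{o}nwall Lemma \ref{lemma_discrete_Gronwall_inequation} absorbs only the $\Theta_{\mathbf{u}}$-self-coupling and delivers the stated intermediate estimate. The delicate point throughout is the order-counting in steps (i)--(iii): one must check that each surviving term in the convection and coupling bounds either carries an explicit factor $\tau$ (so Gr\"{o}nwall applies) or is a genuine consistency contribution of size $O(\tau^2+h^{2r})$, since otherwise the optimal rates would be lost.
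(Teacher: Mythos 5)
Your proposal follows essentially the same route as the paper's proof: testing \eqref{eq_error_equations_u} with $2\tau\Theta_{\tilde{\mathbf{u}}}^{n+1}$, squaring the recast pressure-correction error relation and using discrete incompressibility to kill the cross term so that the $\tilde{\mathbf{u}}$-telescope converts into the $\|\Theta_{\mathbf{u}}^{n+1}\|^2+\tau^2\|\nabla\Theta_p^{n+1}\|^2$ telescope, splitting the two nonlinear differences into temporal, spatial-discretization and SAV-coefficient parts via $\rho^{n+1}=\sqrt{E_2^{n+1}}$, $r^{n+1}=\sqrt{E_1^{n+1}}$ and the estimate \eqref{eq_boundedness_Tb2_00003_E1_boundedness_term}, and leaving $e_r$, $e_\rho$, $e_\mu$ and $\Theta_{\mathbf{u}}^n$ on the right for the later coupled Gr\"{o}nwall argument. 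The decompositions, key identities and order-counting all match the paper's $T_{u1}\sim T_{u8}$ treatment, so the proposal is correct.
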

\begin{proof}
	We recast the \eqref{eq_error_equations_u_p_splitting} to get
	\begin{equation}
		\label{eq_recast_eq_error_equations_u_p_splitting}
		\frac{\Theta_{\mathbf{u}}^{n+1}}{\tau}+\nabla\Theta_p^{n+1}=\frac{\Theta_{\tilde{\mathbf{u}}}^{n+1}}{\tau}+\nabla\Theta_p^n+R_p^n-\tau\nabla\delta_{\tau}\Phi_p^{n+1}.
	\end{equation}
	Taking the inner product of \eqref{eq_error_equations_u_p_splitting} with itself on both sides, we have
	\begin{equation}
		\label{eq_taking_inner_product_itself}
		\begin{aligned}
			\|\Theta_{\mathbf{u}}^{n+1}\|^2+\tau^2\|\nabla\Theta_p^{n+1}\|^2=&~\|\Theta_{\tilde{\mathbf{u}}}^{n+1}\|^2+\tau^2\|\nabla\Theta_p^{n}\|^2
			+\tau^2\|R_p^n-\tau\nabla\delta_{\tau}\Phi_p^{n+1}\|^2+2\tau\left(\Theta_{\tilde{\mathbf{u}}}^{n+1},\nabla\Theta_p^n\right)\\
			&~+2\tau\left(\Theta_{\tilde{\mathbf{u}}}^{n+1},R_p^n-\tau\nabla\delta_{\tau}\Phi_p^{n+1}\right)+2\tau^2\left(\nabla\Theta_p^n,R_p^n-\tau\nabla\delta_{\tau}\Phi_p^{n+1}\right).
		\end{aligned}
	\end{equation}
	Letting $\mathbf{v}_h=2\tau\Theta_{\tilde{\mathbf{u}}}^{n+1}$ in \eqref{eq_error_equations_u}, 
	combining the \eqref{eq_taking_inner_product_itself}, and using the \eqref{eq_Stokes_projection_0001}-\eqref{eq_Stokes_projection_0002},
	we have
	\begin{equation}
		\label{eq_combining_u_tildeu_result}
		\begin{aligned}
			&\|\Theta_{\mathbf{u}}^{n+1}\|^2-\|\Theta_{\mathbf{u}}^n\|^2+\|\Theta_{\tilde{\mathbf{u}}}^{n+1}-\Theta_{\mathbf{u}}^n\|^2+
			\tau^2\left(\|\nabla\Theta_p^{n+1}\|^2-\|\nabla\Theta_p^n\|^2\right)+2\tau\|\nabla\Theta_{\tilde{\mathbf{u}}}^{n+1}\|^2\\
			=&~\tau^2\|R_p^n-\tau\nabla\delta_{\tau}\Phi_p^{n+1}\|^2+2\tau\left(\Theta_{\tilde{\mathbf{u}}}^{n+1},\delta_{\tau}\Phi_{\mathbf{u}}^{n+1}\right)
				+2\tau\left(R_{\mathbf{u}}^n,\Theta_{\tilde{\mathbf{u}}}^{n+1}\right)-2\tau\left(\Phi_p^{n+1}-\Phi_p^n,\nabla\cdot\Theta_{\tilde{\mathbf{u}}}^{n+1}\right)\\
			&~+2\tau\left(\Theta_{\tilde{\mathbf{u}}}^{n+1},R_p^n-\tau\nabla\delta_{\tau}\Phi_p^{n+1}\right)+2\tau^2\left(\nabla\Theta_p^n,R_p^n-\tau\nabla\delta_{\tau}\Phi_p^{n+1}\right)\\
			&~-2\tau\left(\frac{\rho^{n+1}}{\sqrt{E_2^{n+1}}}\mathbf{u}^{n+1}\cdot\nabla\mathbf{u}^{n+1}-\frac{\rho_h^{n+1}}{\sqrt{E_{2h}^n}}\mathbf{u}_h^n\cdot\nabla\mathbf{u}_h^n,\Theta_{\tilde{\mathbf{u}}}^{n+1}\right)\\
			&~+2\tau\left(\frac{\rho^{n+1}}{E_1^{n+1}}\mu^{n+1}\nabla\phi^{n+1}-\frac{r_h^{n+1}}{\sqrt{E_{1h}^{n}}}\mu_h^n\nabla\phi_h^n,\Theta_{\tilde{\mathbf{u}}}^{n+1}\right)=\sum_{i=1}^{8}T_{ui}.
		\end{aligned}
	\end{equation}
	We now estimate the terms $T_{ui}$ on the right-hand side of the equation \eqref{eq_combining_u_tildeu_result} in turn.
	\begin{flalign}
		\big|T_{u1}\big|=&~\big|\tau^2\|R_p^n-\tau\nabla\delta_{\tau}\Phi_p^{n+1}\|^2\big|\leq \tau^2\|R_p^n\|^2+\tau^4\|\nabla\delta_{\tau}\Theta_p^{n+1}\|^2,\\
		\big|T_{u2}\big|=&~\big|2\tau\left(\Theta_{\tilde{\mathbf{u}}}^{n+1},\delta_{\tau}\Phi_{\mathbf{u}}^{n+1}\right)\big|\leq \frac{\tau}{9}\|\nabla\Theta_{\tilde{\mathbf{u}}}^{n+1}\|^2+C\tau\|\delta_{\tau}\Phi_{\mathbf{u}}^{n+1}\|^2,\\		
		\big|T_{u3}\big|=&~\big|2\tau\left(R_{\mathbf{u}}^n,\Theta_{\tilde{\mathbf{u}}}^{n+1}\right)\big|\leq \frac{\tau}{9}\|\nabla\Theta_{\tilde{\mathbf{u}}}^{n+1}\|^2+C\tau\|R_{\mathbf{u}}^n\|_{-1}^2,\\		
		\big|T_{u4}\big|=&~\big|-2\tau\left(\Phi_p^{n+1}-\Phi_p^n,\nabla\cdot\Theta_{\tilde{\mathbf{u}}}^{n+1}\right)\big|=\big|2\tau^2\left(\delta_{\tau}\Phi_p^{n+1},\nabla\cdot\Theta_{\tilde{\mathbf{u}}}^{n+1}\right)\big|\notag\\
		\leq &~\frac{\tau}{9}\|\nabla\Theta_{\tilde{\mathbf{u}}}^{n+1}\|^2+C\tau^3\|\delta_{\tau}\Phi_p^{n+1}\|^2,
	\end{flalign}
	and
	\begin{equation}
		\begin{aligned}
			\big|T_{u5}+T_{u6}\big|=&~\big|2\tau\left(\Theta_{\tilde{\mathbf{u}}}^{n+1},R_p^n-\tau\nabla\delta_{\tau}\Phi_p^{n+1}\right)+2\tau^2\left(\nabla\Theta_p^n,R_p^n-\tau\nabla\delta_{\tau}\Phi_p^{n+1}\right)\big|\\
			\leq&~\frac{\tau}{9}\|\nabla\Theta_{\tilde{\mathbf{u}}}^{n+1}\|^2+C\tau^3\|\nabla\Theta_p^{n}\|^2+C\tau\|R_p^n-\tau\nabla\delta_{\tau}\Phi_p^{n+1}\|^2,\\
			\leq &~\frac{\tau}{9}\|\nabla\Theta_{\tilde{\mathbf{u}}}^{n+1}\|^2+C\tau^3\|\nabla\Theta_p^{n}\|^2+C\tau\left(\|R_p^n\|^2+\tau^2\|\nabla\delta_{\tau}\Phi_p^{n+1}\|^2\right),
		\end{aligned}
	\end{equation}
	and using the equation $\rho^{n+1}=\sqrt{E_2^{n+1}}$, we have
	\begin{equation}
		\label{eq_boundedness_Tr7}
		\begin{aligned}
			\big|T_{u7}\big|=&~\bigg|-2\tau\left(\frac{\rho^{n+1}}{\sqrt{E_2^{n+1}}}\mathbf{u}^{n+1}\cdot\nabla\mathbf{u}^{n+1}-\frac{\rho_h^{n+1}}{\sqrt{E_{2h}^n}}\mathbf{u}_h^n\cdot\nabla\mathbf{u}_h^n,\Theta_{\tilde{\mathbf{u}}}^{n+1}\right)\bigg|\\
			=&~2\tau\bigg|\left(\mathbf{u}^{n+1}\cdot\nabla\mathbf{u}^{n+1}-\mathbf{u}^n\cdot\nabla\mathbf{u}^n+\mathbf{u}^n\cdot\nabla\mathbf{u}^n
			-\mathbf{u}_h^n\cdot\nabla\mathbf{u}_h^n,\Theta_{\tilde{\mathbf{u}}}^{n+1}\right)\\
			&~+\left(\frac{e_{\rho}^{n+1}}{\sqrt{E_{2}^{n+1}}}+\frac{\rho_h^{n+1}}{\sqrt{E_2^{n+1}}}-\frac{\rho_h^{n+1}}{\sqrt{E_{2h}^n}}\right)\left(\mathbf{u}_h^n\cdot\nabla\mathbf{u}_h^n,\Theta_{\tilde{\mathbf{u}}}^{n+1}\right)\bigg|.
		\end{aligned}
	\end{equation}
	We next give the boundedness of each terms on the right-hand side of \eqref{eq_boundedness_Tr7},
	\begin{equation}
		\label{eq_boundedness_Tr7_sub0001}
		\begin{aligned}
			&~2\tau\big|\left(\mathbf{u}^{n+1}\cdot\nabla\mathbf{u}^{n+1}-\mathbf{u}^n\cdot\nabla\mathbf{u}^n,\Theta_{\tilde{\mathbf{u}}}^{n+1}\right)\big|\\
			=&~2\tau\big|\left(\left(\mathbf{u}^{n+1}-\mathbf{u}^n\right)\cdot\nabla\mathbf{u}^{n+1}+\mathbf{u}^n\cdot\nabla\left(\mathbf{u}^{n+1}-\mathbf{u}^n\right),\Theta_{\tilde{\mathbf{u}}}^{n+1}\right)\big|\\
			\leq &~\frac{\tau}{9}\|\nabla\Theta_{\tilde{\mathbf{u}}}^{n+1}\|^2+C\tau^3\|\mathbf{u}_t\|_{L^{\infty}(0,T;L^2(\Omega))}^2\left(\|A\mathbf{u}^{n+1}\|^2+\|A\mathbf{u}^n\|^2\right),
		\end{aligned}
	\end{equation}
	and
	\begin{equation}
		\label{eq_boundedness_Tr7_sub0002}
		\begin{aligned}
			&2\tau\big|\left(\mathbf{u}^n\cdot\nabla\mathbf{u}^n
			-\mathbf{u}_h^n\cdot\nabla\mathbf{u}_h^n,\Theta_{\tilde{\mathbf{u}}}^{n+1}\right)\big|\\
			=&~2\tau\big|\left(\left(\mathbf{u}^n-\mathbf{u}_h^n\right)\cdot\nabla\mathbf{u}^n+\mathbf{u}_h^n\cdot\nabla\left(\mathbf{u}^n-\mathbf{u}_h^n\right),\Theta_{\tilde{\mathbf{u}}}^{n+1}\right)\big|\\
			=&~2\tau\big|\left(e_{\mathbf{u}}^n\cdot\nabla\mathbf{u}^n+\mathbf{u}_h^n\cdot\nabla e_{\mathbf{u}}^n,\Theta_{\tilde{\mathbf{u}}}^{n+1}\right)\big|\\
			\leq &~\frac{\tau}{9}\|\nabla\Theta_{\tilde{\mathbf{u}}}^{n+1}\|^2+C\tau\left(\|A\mathbf{u}^n\|^2+\|\nabla\mathbf{u}^n\|^2+\|\nabla\mathbf{u}_h^n\|^2\right)\left(\|\Phi_{\mathbf{u}}^{n}\|^2+\|\Theta_{\mathbf{u}}^{n}\|^2\right).
		\end{aligned}
	\end{equation}
	According to the \eqref{eq_boundedness_Tb2_00003_E1_boundedness_term}, and using a similar approach, we can obtain the following results:
	\begin{equation}
		\label{eq_boundedness_Tr7_00003_E2_boundedness_term}
		\begin{aligned}
			\bigg|\frac{1}{\sqrt{E_2^{n+1}}}-\frac{1}{\sqrt{E_{2h}^n}}\bigg|
			=&~\frac{|E_{2h}^n-E_2^{n+1}|}{\sqrt{E_2^{n+1}}\sqrt{E_{2h}^n}\left(\sqrt{E_2^{n+1}}+\sqrt{E_{2h}^n}\right)}\\
			=&~\frac{|E_{2h}^n-E_2^n+E_2^n-E_2^{n+1}|}{\sqrt{E_2^{n+1}}\sqrt{E_{2h}^n}\left(\sqrt{E_2^{n+1}}+\sqrt{E_{2h}^n}\right)}
			\leq C\left(\|e_{\mathbf{u}}^n\|+\tau\|\mathbf{u}_t\|_{L^{\infty}(0,T;L^2(\Omega))}\right).
		\end{aligned}
	\end{equation}
	We then obtain
	\begin{equation}
		\label{eq_boundedness_Tr7_sub0003}
		\begin{aligned}
			&2\tau\bigg|\left(\frac{e_{\rho}^{n+1}}{\sqrt{E_{2}^{n+1}}}+\frac{\rho_h^{n+1}}{\sqrt{E_2^{n+1}}}-\frac{\rho_h^{n+1}}{\sqrt{E_{2h}^n}}\right)\left(\mathbf{u}_h^n\cdot\nabla\mathbf{u}_h^n,\Theta_{\tilde{\mathbf{u}}}^{n+1}\right)\bigg|\\
			\leq &~C\tau\left(\frac{|e_{\rho}^{n+1}|}{\sqrt{C_0}}+\|e_{\mathbf{u}}^n\|+\tau\|\mathbf{u}_t\|_{L^{\infty}(0,T;L^2(\Omega))}\right)\|\mathbf{u}_h^n\|\|\nabla\mathbf{u}_h^n\|\|\nabla\Theta_{\tilde{\mathbf{u}}}^{n+1}\|\\
			\leq &~\frac{\tau}{9}\|\nabla\Theta_{\tilde{\mathbf{u}}}^{n+1}\|^2+C\tau\|\mathbf{u}_h^n\|^2\|\nabla\mathbf{u}_h^n\|^2\left(|e_{\rho}^{n+1}|^2+\|\Phi_{\mathbf{u}}^n\|^2+\|\Theta_{\mathbf{u}}^n\|^2+\tau\|\mathbf{u}_t\|_{L^{\infty}(0,T;L^2(\Omega))}^2\right).
		\end{aligned}
	\end{equation}
	Thus, combining the \eqref{eq_boundedness_Tr7_sub0001}-\eqref{eq_boundedness_Tr7_sub0003} with \eqref{eq_boundedness_Tr7}, we obtain the boundedness of term $T_{u7}$ as follows,
	\begin{equation}
		\begin{aligned}
			\big|T_{u7}\big|\leq&~ \frac{\tau}{9}\|\nabla\Theta_{\tilde{\mathbf{u}}}^{n+1}\|^2+C\tau^3\|\mathbf{u}_t\|_{L^{\infty}(0,T;L^2(\Omega))}^2\left(\|A\mathbf{u}^{n+1}\|^2+\|A\mathbf{u}^n\|^2\right)\\
			&+C\tau\left(\|A\mathbf{u}^n\|^2+\|\nabla\mathbf{u}^n\|^2+\|\nabla\mathbf{u}_h^n\|^2\right)\left(\|\Phi_{\mathbf{u}}^{n}\|^2+\|\Theta_{\mathbf{u}}^{n}\|^2\right)\\
			&+C\tau\left(\|\mathbf{u}_h^n\|^2\|\nabla\mathbf{u}_h^n\|^2\right)\left(|e_{\rho}^{n+1}|^2+\|\Phi_{\mathbf{u}}^n\|^2+\|\Theta_{\mathbf{u}}^n\|^2+\tau^2\|\mathbf{u}_t\|_{L^{\infty}(0,T;L^2(\Omega))}^2\right).
		\end{aligned}
	\end{equation}
	Using the \eqref{eq_boundedness_Tb2_00003_E1_boundedness_term},	We next give the boundedness of the term $T_{u8}$,
	\begin{flalign}\big|T_{u8}\big|=&~\bigg|2\tau\left(\frac{\rho^{n+1}}{E_1^{n+1}}\mu^{n+1}\nabla\phi^{n+1}-\frac{r_h^{n+1}}{\sqrt{E_{1h}^{n}}}\mu_h^n\nabla\phi_h^n,\Theta_{\tilde{\mathbf{u}}}^{n+1}\right)\bigg|\notag\\
		= &~2\tau\bigg|\left(\mu^{n+1}\nabla\phi^{n+1}-\mu^n\nabla\phi^n+\mu^n\nabla\phi^n-\mu_h^n\nabla\phi_h^n,\Theta_{\tilde{\mathbf{u}}}^{n+1}\right)\notag\\
		&~+\left(\frac{e_r^{n+1}}{\sqrt{E_1^{n+1}}}+\frac{r_h^{n+1}}{\sqrt{E_1^{n+1}}}-\frac{r_h^{n+1}}{\sqrt{E_{1h}^n}}\right)\left(\mu_h^n\nabla\phi_h^n,\Theta_{\tilde{\mathbf{u}}}^{n+1}\right)\bigg|\notag\\
		= &~2\tau\big|\left(\left(\mu^{n+1}-\mu^n\right)\nabla\phi^{n+1}+\mu^n\nabla\left(\phi^{n+1}-\phi^n\right)+e_{\mu}^n\nabla\phi^n+\mu_h^n\nabla e_{\phi}^n,\Theta_{\tilde{\mathbf{u}}}^{n+1}\right)\notag\\
		\label{eq_boundedness_Tr8}
		&~+\left(\frac{e_r^{n+1}}{\sqrt{E_1^{n+1}}}+\frac{r_h^{n+1}}{\sqrt{E_1^{n+1}}}-\frac{r_h^{n+1}}{\sqrt{E_{1h}^n}}\right)\left(\mu_h^n\nabla\phi_h^n,\Theta_{\tilde{\mathbf{u}}}^{n+1}\right)\bigg|\\
		\leq &~C\tau\left(\tau^2\|\mu_t\|_{L^{\infty}(0,T;H^1(\Omega))}\|\nabla\phi^{n+1}\|+\tau^2\|\mu^{n}\|_1\|\phi_t\|_{L^{\infty}(0,T;H^1(\Omega))}\right)\|\nabla\Theta_{\tilde{\mathbf{u}}}^{n+1}\|\notag\\
		&~+C\tau\left(\|e_{\mu}^n\|_{L^4}\|\nabla\phi^n\|_{L^4}\|\nabla\Theta_{\tilde{\mathbf{u}}}^{n+1}\|+\|\mu_h^n\|_{L^4}\|\nabla e_{\phi}^n\|\|\Theta_{\tilde{\mathbf{u}}}^{n+1}\|_{L^\infty}\right)\notag\\
		&~+\left(\frac{C\tau|e_r^{n+1}|}{\sqrt{C_0}}+\|e_{\phi}^n\|+\tau\|\phi_t\|_{L^{\infty}(0,T;H^1(\Omega))}\right)\|\mu_h^n\|_1\|\nabla\phi_h^n\|\|\nabla\Theta_{\tilde{\mathbf{u}}}^{n+1}\|\notag\\
		\leq &~\frac{\tau}{9}\|\nabla\Theta_{\tilde{\mathbf{u}}}^{n+1}\|^2+C\tau^3\left(\|\mu_t\|_{L^{\infty}(0,T;H^1(\Omega))}^2\|\nabla\phi^{n+1}\|^2+\|\mu^{n}\|_1^2\|\phi_t\|_{L^{\infty}(0,T;H^1(\Omega))}^2\right)\notag\\
		&~+C\tau\|e_{\mu}^n\|_1^2\|\phi^n\|_2^2+C\tau\|\mu_h^n\|_1^2\left(\|\nabla \Phi_{\phi}^n\|^2+\|\nabla\Theta_{\phi}^n\|^2\right)\notag\\
		&~+C\tau\|\mu_h^n\|_1^2\|\nabla\phi_h^n\|^2\left(|e_{r}^{n+1}|^2+\|\Phi_{\phi}^n\|^2+\|\Theta_{\phi}^n\|^2+\tau^2\|\phi_t\|_{L^{\infty}(0,T;H^1(\Omega))}^2\right).\notag
	\end{flalign}
	Combining the boundedness of terms $T_{u1}\sim T_{u8}$ with \eqref{eq_combining_u_tildeu_result}, summing up $n$ from $0$ to $m$, we have
	\begin{flalign}
		&\|\Theta_{\mathbf{u}}^{m+1}\|^2+\tau^2\|\nabla\Theta_p^{m+1}\|^2+\sum_{n=0}^{m}\|\Theta_{\tilde{\mathbf{u}}}^{n+1}-\Theta_{\mathbf{u}}^n\|^2+\tau\sum_{n=0}^{m}\|\nabla\Theta_{\tilde{\mathbf{u}}}^{n+1}\|^2\notag\\
		\leq &~\|\Theta_{\mathbf{u}}^{0}\|^2+\tau^2\|\nabla\Theta_p^{0}\|^2+C\tau\sum_{n=0}^{m}\left(\|R_p^n\|^2+\|R_{\mathbf{u}}^n\|_{-1}^2+\|\delta_{\tau}\Phi_{\mathbf{u}}^{n+1}\|^2\right)\notag\\
		&~+C\tau^3\sum_{n=0}^{m}\|\delta_{\tau}\Phi_p^{n+1}\|_1^2+C\tau^3\sum_{n=0}^{m}\|\mathbf{u}_t\|_{L^{\infty}(0,T;L^2(\Omega))}^2\left(\|A\mathbf{u}^{n+1}\|^2+\|A\mathbf{u}^n\|^2\right)\notag\\
		&+C\tau\sum_{n=0}^{m}\left(\|A\mathbf{u}^n\|^2+\|\nabla\mathbf{u}^n\|^2+\|\nabla\mathbf{u}_h^n\|^2\right)\left(\|\Phi_{\mathbf{u}}^{n}\|^2+\|\Theta_{\mathbf{u}}^{n}\|^2\right)\notag\\
		&+C\tau\sum_{n=0}^{m}\left(\|\mathbf{u}_h^n\|^2\|\nabla\mathbf{u}_h^n\|^2\right)\left(|e_{\rho}^{n+1}|^2+\|\Phi_{\mathbf{u}}^n\|^2+\|\Theta_{\mathbf{u}}^n\|^2+\tau^2\|\mathbf{u}_t\|_{L^{\infty}(0,T;L^2(\Omega))}^2\right)\notag\\
		&~+C\tau^3\sum_{n=0}^{m}\left(\|\mu_t\|_{L^{\infty}(0,T;H^1(\Omega))}^2\|\nabla\phi^{n+1}\|^2+\|\mu^{n}\|_1^2\|\phi_t\|_{L^{\infty}(0,T;H^1(\Omega))}^2\right)\notag\\
		&~+C\tau\sum_{n=0}^{m}\left(\|e_{\mu}^n\|_1^2\|\phi^n\|_2^2+\|\mu_h^n\|_1^2\left(\|\nabla \Phi_{\phi}^n\|^2+\|\nabla\Theta_{\phi}^n\|^2\right)\right)\notag\\
		&~+C\tau\sum_{n=0}^{m}\|\mu_h^n\|_1^2\|\nabla\phi_h^n\|^2\left(|e_{r}^{n+1}|^2+\|\Phi_{\phi}^n\|^2+\|\Theta_{\phi}^n\|^2+\tau^2\|\phi_t\|_{L^{\infty}(0,T;H^1(\Omega))}^2\right).\notag
	\end{flalign}
	According to Lemma \ref{lemma_Stokes_projection_bound} and \ref{lemma_Stokes_projection_Qboundedness},
	we have 
	\begin{equation}
		\tau^3\sum_{n=0}^{m}\|\delta_{\tau}\Phi_p^{n+1}\|_1^2\leq C\tau^2\int_{0}^{T}\left(\|\mathbf{u}\|_2^2+\|p_t\|_1^2\right)dt,
	\end{equation}
	and
	\begin{equation}
		\tau\sum_{n=0}^{m}\|\delta_{\tau}\Phi_{\mathbf{u}}^{n+1}\|^2\leq Ch^{2(r+1)}\int_{0}^{T}\left(\|\mathbf{u}_t\|_{r+1}^2+\|p_t\|_r^2\right).
	\end{equation}
	Thus, we complete the proof of this lemma based on the above inequalities, Lemma \ref{lemma_discrete_Gronwall_inequation}, \ref{lemma_Stokes_projection_bound}, \ref{lemma_Stokes_projection_Qboundedness}, \ref{lemma_boundedness_E_phi_Deltaphi} and
	\ref{lemma_truncation_boundedness}.
\end{proof}
\begin{Lemma}
	\label{lemma_boundedness_er_rho}
	Under the assumption of \eqref{eq_varibles_satisfied_regularities}, for all $m\geq 0$, and $e_{r}^0=e_{\rho}^0=0$, we have
	\begin{equation}\label{eq_r_boundedness_error_estimate}
		|e_r^{n+1}|^2\leq C\left(\tau^2+h^{2(r+1)}\right),
	\end{equation}
	and
	\begin{equation}\label{eq_rho_boundedness_error_estimate}
		|e_{\rho}^{n+1}|^2\leq C\left(\tau^2+h^{2(r+1)}\right),
	\end{equation}
	where $C$ is positive constant that  does not depend on $\tau$ and $h$.
\end{Lemma}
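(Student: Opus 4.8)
The plan is to derive difference inequalities for the two scalar errors $e_r^{n+1}$ and $e_\rho^{n+1}$ separately and then close them, together with Lemmas \ref{lemma_Theta_phi_H1} and \ref{lemma_Theta_u_p_L2}, through the discrete Gr\"onwall inequality. First I would test the error equation \eqref{eq_error_equations_r} by $2\tau e_r^{n+1}$. The leading difference $\delta_\tau e_r^{n+1}$ yields the telescoping quantity $|e_r^{n+1}|^2-|e_r^n|^2+|e_r^{n+1}-e_r^n|^2$, and the truncation contributions $R_r^n$ and $T_1^n$ are absorbed by Young's inequality and controlled through Lemma \ref{lemma_truncation_boundedness}, which supplies the $C\tau^2$ part of the bound. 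The substantive work is the nonlinear mismatch $\frac{1}{2\sqrt{E_{1h}^n}}(F'(\phi_h^n),\delta_\tau\phi_h^{n+1})-\frac{1}{2\sqrt{E_1^{n+1}}}(F'(\phi^{n+1}),\delta_\tau\phi^{n+1})$, which I would split into a purely temporal part $\phi^{n+1}\to\phi^n$ of order $\tau$, an argument error $F'(\phi^n)-F'(\phi_h^n)$ treated by the Lipschitz estimates \eqref{eq_nonlinear_term_f_boundedness0001}--\eqref{eq_nonlinear_term_f_boundedness0002}, and a coefficient error $1/\sqrt{E_1^{n+1}}-1/\sqrt{E_{1h}^n}$ bounded by \eqref{eq_boundedness_Tb2_00003_E1_boundedness_term}.

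For the factor $\delta_\tau\phi_h^{n+1}$ I would write $\delta_\tau\phi_h^{n+1}=\delta_\tau\phi^{n+1}-\delta_\tau\Phi_\phi^{n+1}-\delta_\tau\Theta_\phi^{n+1}$ and pair $F'(\phi_h^n)$, which is bounded in $H^1$ via Lemma \ref{lemma_boundedness_E_phi_Deltaphi}, against the discrete part in the $H^{-1}$ norm, invoking the superconvergent estimate \eqref{eq_boundness_Ritz_0004} for $\delta_\tau\Phi_\phi^{n+1}$ and the bound \eqref{eq_delta_tau_Theta_phi_Hneg_boundedness} for $\|\delta_\tau\Theta_\phi^{n+1}\|_{-1}$; this negative-norm pairing is what keeps the spatial order at $\mathcal{E}_h$ rather than the energy-norm rate $h^r$. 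The added discrete coupling $\frac{1}{2\lambda\sqrt{E_{1h}^n}}\bigl((\mu_h^{n+1},\mathbf{u}_h^n\cdot\nabla\phi_h^n)-(\tilde{\mathbf{u}}_h^{n+1},\mu_h^n\nabla\phi_h^n)\bigr)$ has no continuous counterpart in \eqref{eq_truncation_error_equations_r}; since $(\mu,\mathbf{u}\cdot\nabla\phi)-(\mathbf{u},\mu\nabla\phi)=0$ exactly, I would rewrite it as $\bigl(\mu_h^{n+1}-\mu_h^n,\mathbf{u}_h^n\cdot\nabla\phi_h^n\bigr)+\bigl(\mu_h^n,(\mathbf{u}_h^n-\tilde{\mathbf{u}}_h^{n+1})\cdot\nabla\phi_h^n\bigr)$, each factor being an $O(\tau)$ increment, so the whole term contributes at order $\tau$ after multiplication by $2\tau e_r^{n+1}$ and summation.

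The $\rho$ estimate proceeds in parallel: test \eqref{eq_error_equations_rho} by $2\tau e_\rho^{n+1}$, telescope the leading term, bound $R_\rho^n,T_2^n$ by Lemma \ref{lemma_truncation_boundedness} (using that $\rho^{n+1}$ and $\rho_h^{n+1}$ are bounded below away from zero by $\sqrt{C_2}$ through the root selection), and split the convective and inertial mismatches $\frac{1}{\sqrt{E_{2h}^n}}(\mathbf{u}_h^n\cdot\nabla\mathbf{u}_h^n,\tilde{\mathbf{u}}_h^{n+1})-\frac{1}{\sqrt{E_2^n}}(\mathbf{u}^n\cdot\nabla\mathbf{u}^n,\mathbf{u}^n)$ into temporal, velocity-error, and coefficient-error pieces, the last handled by \eqref{eq_boundedness_Tr7_00003_E2_boundedness_term} and the Stokes-projection errors together with $\|\Theta_{\tilde{\mathbf{u}}}^{n+1}\|$ and $\|\Theta_{\mathbf{u}}^n\|$. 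After summing both inequalities from $n=0$ to $m$ and using $e_r^0=e_\rho^0=0$, the right-hand side reduces to $C(\tau^2+h^{2(r+1)})$, plus $C\tau\sum(|e_r^{n+1}|^2+|e_\rho^{n+1}|^2)$ and the already-controlled quantities $\|\Theta_\phi\|_1^2$, $\|\Theta_{\mathbf{u}}\|^2$ and $\tau\sum\|e_\mu\|_1^2$ coming from Lemmas \ref{lemma_Theta_phi_H1} and \ref{lemma_Theta_u_p_L2}.

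The main obstacle is that the coupling is genuinely two-way, since Lemmas \ref{lemma_Theta_phi_H1} and \ref{lemma_Theta_u_p_L2} bound $\Theta_\phi,\Theta_{\mathbf{u}},e_\mu$ in terms of $e_r,e_\rho$, while the present inequalities bound $e_r,e_\rho$ in terms of those same quantities. I would therefore form the combined quantity $\|\Theta_\phi^{m+1}\|_1^2+\|\Theta_{\mathbf{u}}^{m+1}\|^2+\tau^2\|\nabla\Theta_p^{m+1}\|^2+|e_r^{m+1}|^2+|e_\rho^{m+1}|^2$, insert Lemmas \ref{lemma_Theta_phi_H1} and \ref{lemma_Theta_u_p_L2}, and apply the discrete Gr\"onwall Lemma \ref{lemma_discrete_Gronwall_inequation} with $\tau$ small enough that $\tau d_n<1$, so that all the cross terms carry a $C\tau$ factor and are swept into the exponential. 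The delicate point, and the reason the quasi-projection operator of \emph{[Cai et al.]} can be dispensed with, is the spatial-order bookkeeping: every surviving forcing term in the scalar inequalities must be driven down to $O(\tau^2+h^{2(r+1)})$, which forces the field errors to enter the $r$- and $\rho$-equations only through negative norms, where the superconvergence estimates \eqref{eq_boundness_Ritz_0002} and \eqref{eq_boundness_Ritz_0004} give $\mathcal{E}_h$, and through the exactly cancelling coupling term, so that no suboptimal $h^r$ energy-norm contribution is left at leading order.
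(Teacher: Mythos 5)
Your proposal follows essentially the same route as the paper: test \eqref{eq_error_equations_r} and \eqref{eq_error_equations_rho} by $2\tau e_r^{n+1}$ and $2\tau e_{\rho}^{n+1}$, telescope the leading differences, split the nonlinear mismatches into temporal, argument-error and coefficient-error pieces via \eqref{eq_boundedness_Tb2_00003_E1_boundedness_term} and \eqref{eq_nonlinear_term_f_boundedness0001}--\eqref{eq_nonlinear_term_f_boundedness0002}, pair $F'(\phi_h^n)$ against the discrete time increment in the $H^{-1}$ norm, bound the truncation terms by Lemma \ref{lemma_truncation_boundedness}, and close with the discrete Gr\"{o}nwall inequality together with Lemmas \ref{lemma_Theta_phi_H1} and \ref{lemma_Theta_u_p_L2}. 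The one place you genuinely deviate is the added coupling term: you rewrite it as $\left(\mu_h^{n+1}-\mu_h^n,\mathbf{u}_h^n\cdot\nabla\phi_h^n\right)+\left(\mathbf{u}_h^n-\tilde{\mathbf{u}}_h^{n+1},\mu_h^n\nabla\phi_h^n\right)$ and call each factor an $O(\tau)$ increment, whereas the paper inserts the exactly vanishing continuous expression $\frac{1}{\sqrt{E_1^n}}\bigl[\left(\mathbf{u}^n\cdot\nabla\phi^n,\mu^n\right)-\left(\mu^n\nabla\phi^n,\mathbf{u}^n\right)\bigr]$ and converts everything into error terms $e_{\mu}^{n+1}$, $e_{\tilde{\mathbf{u}}}^{n+1}$, $e_{\phi}^n$, $e_{\mathbf{u}}^n$ plus continuous time increments. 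Your identity is algebraically correct, but the discrete increments $\mu_h^{n+1}-\mu_h^n$ and $\mathbf{u}_h^n-\tilde{\mathbf{u}}_h^{n+1}$ are not a priori $O(\tau)$: you must still decompose them into continuous increments (controlled by regularity) plus error terms (controlled by the other lemmas), at which point you arrive at the same collection of terms as the paper, so this is a presentational shortcut rather than a gap. Your explicit combined-quantity Gr\"{o}nwall argument to break the two-way dependence among Lemmas \ref{lemma_Theta_phi_H1}, \ref{lemma_Theta_u_p_L2} and the present lemma is in fact more careful than the paper's terse ``using the above Lemmas''; your remark on the spatial-order bookkeeping is also well taken, since the paper's own summed inequality retains $H^1$-type error terms such as $\tau\sum\|e_{\mu}^{n+1}\|_1^2$ and $\|e_{\phi}^n\|_1^2$, which the preceding lemmas control only at order $\tau^2+h^{2r}$, so the stated rate $h^{2(r+1)}$ requires exactly the negative-norm routing you describe.
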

\begin{proof}
Multiplying \eqref{eq_error_equations_r} by $2\tau e_r^{n+1}$, we obtain
\begin{equation}
	\label{eq_multiply_2tau_ern1}
	\begin{aligned}
		&|e_{r}^{n+1}|^2-|e_r^n|^2+|e_r^{n+1}-e_r^n|^2\\
		=&~\frac{\tau e_r^{n+1}}{\sqrt{E_1^{n+1}}}\left(F'(\phi^{n+1}),\delta_{\tau}\phi^{n+1}\right)-\frac{\tau e_r^{n+1}}{\sqrt{E_{1h}^{n}}}\left(F'(\phi_h^{n}),\delta_{\tau}\phi_h^{n+1}\right)+2\tau e_r^{n+1}\left(R_r^n-T_1^n\right)\\
		&~-\frac{\tau e_r^{n+1}}{\sqrt{E_{1h}^n}}\left(\left(\mu_h^{n+1},\mathbf{u}_h^n\cdot\nabla\phi_h^n\right)-\left(\tilde{\mathbf{u}}_h^{n+1},\mu_h^n\cdot\nabla\phi_h^n\right)\right)=\sum_{i=1}^{3}T_{ri}.
	\end{aligned}
\end{equation}
We can first derive the following result by \eqref{eq_boundedness_Tb2_00003_E1_boundedness_term} and
\eqref{eq_delta_tau_Theta_phi_Hneg_boundedness},
\begin{equation}
	\label{eq_delta_tau_e_phi_n1_negative_norm_boundedness}
	\begin{aligned}
		\|\delta_{\tau} e_{\phi}^{n+1}\|_{-1}\leq &~\|\nabla e_{\mu}^{n+1}\|+\|R_{\phi}^n\|_{-1}+C\tau\left(\|\phi^{n+1}\|_2\|\mathbf{u}_{t}\|_{L^{\infty}(0,T;L^2(\Omega))}+\|A\mathbf{u}^n\|\|\phi_t\|_{L^{\infty}(0,T;H^1(\Omega))}\right)\\
		&~+C\tau\left(\|\phi^n\|_2+\|\nabla\mathbf{u}_h^n\|+\|\nabla\mathbf{u}_h^n\|\|\phi_h^n\|_1\right)
		\left(\|\Theta_{\mathbf{u}}^n\|+\|\Phi_{\mathbf{u}}^n\|+\|\Phi_{\phi}^n\|_1+\|\Theta_{\phi}^n\|_1+|e_r^{n+1}|\right).
	\end{aligned}
\end{equation}
And then the term $T_{r1}$ is bounded by
\begin{equation}
	\label{eq_Tr1_boundedness_esitmation}
	\begin{aligned}
		\big|T_{r1}\big|=&~\bigg|\frac{\tau e_r^{n+1}}{\sqrt{E_1^{n+1}}}\left(F'(\phi^{n+1}),\delta_{\tau}\phi^{n+1}\right)-\frac{\tau e_r^{n+1}}{\sqrt{E_{1h}^{n}}}\left(F'(\phi_h^{n}),\delta_{\tau}\phi_h^{n+1}\right)\bigg|\\
		=&~\bigg|\frac{\tau e_r^{n+1}}{\sqrt{E_1^{n+1}}}\left(F'(\phi^{n+1}),\delta_{\tau}\phi^{n+1}\right)-\frac{\tau e_{r}^{n+1}}{\sqrt{E_{1h}^{n}}}\left(F'(\phi_h^n),\delta_{\tau}\phi^{n+1}\right)+\frac{\tau e_{r}^{n+1}}{\sqrt{E_{1h}^{n}}}\left(F'(\phi_h^n),\delta_{\tau} e_{\phi}^{n+1}\right)\bigg|\\
		\leq &~\tau|e_r^{n+1}|\bigg|\left(\frac{1}{\sqrt{E_1^{n+1}}}-\frac{1}{\sqrt{E_{1h}^{n}}}\right)\left(F'(\phi^{n+1}),\delta_{\tau}\phi^{n+1}\right)+\frac{1}{\sqrt{E_{1h}^{n}}}\left(F'(\phi^{n+1})-F'(\phi_h^n),\delta_{\tau}\phi^{n+1}\right)\\
		&~+\frac{1}{\sqrt{E_{1h}^{n}}}\left(F'(\phi_h^n),\delta_{\tau}e_{\phi}^{n+1}\right)\bigg|\\
		\leq &~C\tau|e_{r}^{n+1}|\left(\|e_{\phi}^{n}\|+\tau\|\phi_t\|_{L^{\infty}(0,T;H^1(\Omega))}+\|e_{\phi}^{n+1}\|\right)\|\phi_t\|_{L^{\infty}(0,T;H^1(\Omega))}\\
		&~+C\tau|e_r^{n+1}|\|\nabla F'(\phi_h^n)\|\|\delta_{\tau}e_{\phi}^{n+1}\|_{-1}\\
		\leq &~C\tau\left(|e_r^{n+1}|^2\|\phi_t\|_{L^{\infty}(0,T;H^1(\Omega))}^2+\|e_{\phi}^n\|^2+\|e_{\phi}^{n+1}\|^2+\tau^2\|\phi_t\|_{L^{\infty}(0,T;H^1(\Omega))}^2\right)+\frac{\tau}{3}\| e_{\mu}^{n+1}\|_1^2\\
		&~+C\tau\|R_{\phi}^n\|_{-1}^2+C\tau^3\left(\|\phi^{n+1}\|_2^2\|\mathbf{u}_{t}\|_{L^{\infty}(0,T;L^2(\Omega))}^2+\|A\mathbf{u}^n\|^2\|\phi_t\|_{L^{\infty}(0,T;H^1(\Omega))}^2\right)\\
		&~+C\tau\left(\|\phi^n\|_2^2+\|\nabla\mathbf{u}_h^n\|^2+\|\nabla\mathbf{u}_h^n\|^2\|\phi_h^n\|_1^2\right)
		\left(\|\Theta_{\mathbf{u}}^n\|^2+\|\Phi_{\mathbf{u}}^n\|^2+\|\Phi_{\phi}^n\|_1^2+\|\Theta_{\phi}^n\|_1^2+|e_r^{n+1}|^2\right).
	\end{aligned}
\end{equation}
Nextly, we can derive the boundedness of the term $T_{r2}$,
\begin{equation}
	\label{eq_Tr2_boundedness_esitmation}
	\begin{aligned}
		|T_{r2}|=&~\big|2\tau e_r^{n+1}\left(R_r^n-T_1^n\right)\big|\leq \tau|e_r^{n+1}|^2+\tau\left(\|R_r^n\|^2+\|T_1^n\|^2\right).
	\end{aligned}
\end{equation}
By using the $\left(\mathbf{u}\cdot\nabla\phi,\mu\right)-\left(\mu\nabla\phi,\mathbf{u}\right)=0$, we make the following transformation
for the term $T_{r3}$,
\begin{equation}
	\label{eq_Tr3_transformation_equations}
	\begin{aligned}
		\big|T_{r3}\big|=&~\bigg|-\frac{\tau e_r^{n+1}}{\sqrt{E_{1h}^n}}\left(\left(\mu_h^{n+1},\mathbf{u}_h^n\cdot\nabla\phi_h^n\right)-\left(\tilde{\mathbf{u}}_h^{n+1},\mu_h^n\cdot\nabla\phi_h^n\right)\right)\bigg|\\
		=&~\tau|e_r^{n+1}|\bigg|\frac{1}{\sqrt{E_1^n}}\left(\mathbf{u}^n\cdot\nabla\phi^n,\mu^n\right)-\frac{1}{\sqrt{E_{1h}^n}}\left(\mathbf{u}_h^n\cdot\nabla\phi_h^n,\mu_h^{n+1}\right)\\
		&~+\frac{1}{\sqrt{E_1^n}}\left(\mu^n\nabla\phi^n,\mathbf{u}^n\right)-\frac{1}{\sqrt{E_{1h}^n}}\left(\mu_h^n\nabla\phi_h^n,\tilde{\mathbf{u}}_h^{n+1}\right)\bigg|.
	\end{aligned}
\end{equation}
Thus, according to the \eqref{eq_boundedness_Tb2_00003_E1_boundedness_term}, we have
\begin{equation}
	\label{eq_boundedness_Tb2_00003_E1_boundedness_term_re_0001}
	\begin{aligned}
		\frac{1}{\sqrt{E_1^{n}}}-\frac{1}{\sqrt{E_{1h}^n}}
		=&~\frac{|E_{1h}^n-E_1^{n}|}{\sqrt{E_1^{n}E_{1h}^n}\left(\sqrt{E_1^{n}}+\sqrt{E_{1h}^n}\right)}\\
		=&~\frac{|E_{1h}^n-E_1^n|}{\sqrt{E_1^{n}E_{1h}^n}\left(\sqrt{E_1^{n}}+\sqrt{E_{1h}^n}\right)}
		\leq C\|e_{\phi}^n\|.
	\end{aligned}
\end{equation}
Using the \eqref{eq_boundedness_Tb2_00003_E1_boundedness_term_re_0001},
the first term of \eqref{eq_Tr3_transformation_equations} is bounded by
\begin{equation}
	\label{eq_Tr3_transformation_equations_sub001}
	\begin{aligned}
		&\tau|e_r^{n+1}|\bigg|\frac{1}{\sqrt{E_1^n}}\left(\mathbf{u}^n\cdot\nabla\phi^n,\mu^n\right)-\frac{1}{\sqrt{E_{1h}^n}}\left(\mathbf{u}_h^n\cdot\nabla\phi_h^n,\mu_h^{n+1}\right)\bigg|\\
		\leq&~\frac{\tau|e_r^{n+1}|}{\sqrt{E_1^n}}\bigg|\left(e_{\mathbf{u}}^n\cdot\nabla\phi^n+\mathbf{u}_h^n\cdot\nabla e_{\phi}^n,\mu^n\right)
		+\left(\mathbf{u}_h^n\cdot\nabla\phi_h^n,\mu^n-\mu^{n+1}+e_{\mu}^{n+1}\right)\bigg|\\
		&~+\bigg|\left(\frac{\tau|e_r^{n+1}|}{\sqrt{E_1^n}}-\frac{\tau|e_r^{n+1}|}{\sqrt{E_{1h}^n}}\right)\left(\mathbf{u}_h^n\cdot\nabla\phi_h^n,\mu_h^{n+1}\right)\bigg|\\
		\leq &~\frac{C\tau|e_r^{n+1}|}{\sqrt{C_0}}\left(\left(\|e_{\mathbf{u}}^n\|\|\phi^n\|_2+\|\nabla\mathbf{u}_h^n\|\|\nabla e_{\phi}^n\|\right)\|\mu^n\|_1+\left(\|e_{\mu}^{n+1}\|_1+\tau\|\mu_t\|_{L^{\infty}(0,T;H^1(\Omega))}\right)\|\nabla\phi_h^n\|\|\nabla\mathbf{u}_h^n\|\right)\\
		&~+C\tau|e_r^{n+1}|\|e_{\phi}^n\|\|\nabla\mathbf{u}_h^n\|\|\nabla\phi_h^n\|\|\mu_h^{n+1}\|_{1}\\
		\leq &~\frac{\tau}{3}\|e_{\mu}^{n+1}\|_1^2+C\tau^3\|\mu_t\|_{L^{\infty}(0,T;H^1(\Omega))}^2\|\nabla\phi_h^n\|^2\|\nabla\mathbf{u}_h^n\|^2+C\tau\left(\|\nabla\mathbf{u}_h^n\|^2\|\nabla\phi_h^n\|^2+\|\mu_h^{n+1}\|_1^2\right)|e_r^{n+1}|^2\\
		&~
		+C\tau\left(\|e_{\mathbf{u}}^n\|^2+\| e_{\phi}^n\|_1^2\right)\left(\|\phi^n\|_2^2\|\mu^n\|_1^2+\|\nabla\mathbf{u}_h^n\|^2\|\mu^n\|_1^2+\|\nabla\mathbf{u}_h^n\|^2\|\nabla\phi_h^n\|^2\right),
	\end{aligned}
\end{equation}
and similar to \eqref{eq_Tr3_transformation_equations_sub001}, the second term of  \eqref{eq_Tr3_transformation_equations} is estimated by
\begin{equation}
	\label{eq_Tr3_transformation_equations_sub002}
	\begin{aligned}
		&\tau|e_r^{n+1}|\bigg|\frac{1}{\sqrt{E_1^n}}\left(\mu^n\nabla\phi^n,\mathbf{u}^n\right)-\frac{1}{\sqrt{E_{1h}^n}}\left(\mu_h^n\nabla\phi_h^n,\tilde{\mathbf{u}}_h^{n+1}\right)\bigg|\\
		\leq&~\frac{\tau|e_r^{n+1}}{\sqrt{E_1^n}}|\bigg|\left(e_{\mu}^n\nabla\phi^n+\mu_h^n\nabla e_{\phi}^n,\mathbf{u}^n\right)+\left(\mu_h^n\nabla\phi_h^n,\mathbf{u}^n-\mathbf{u}^{n+1}+e_{\tilde{\mathbf{u}}}^{n+1}\right)\bigg|\\
		&~+\bigg|\left(\frac{\tau|e_r^{n+1}|}{\sqrt{E_1^n}}-\frac{\tau |e_r^{n+1}|}{\sqrt{E_{1h}^n}}\right)\left(\mu_h^n\nabla\phi_h^n,\tilde{\mathbf{u}}_h^{n+1}\right)\bigg|\\
		\leq &~\frac{C\tau|e_r^{n+1}|}{\sqrt{C_0}}\left(\left(\|e_{\mu}^n\|_1\|\nabla\phi^n\|+\|\mu_h^n\|_1\|\nabla e_{\phi}^n\|\right)\|\nabla\mathbf{u}^n\|\right.\\
		&~\left.
		+\left(\|\nabla e_{\tilde{\mathbf{u}}}^{n+1}\|+\tau\|\mathbf{u}_t\|_{L^{\infty}(0,T;L^2(\Omega))}\right)\|\nabla\phi_h^n\|\|\mu_h^n\|_1\right)+C\tau|e_r^{n+1}|\|e_{\phi}^n\|\|\nabla\tilde{\mathbf{u}}_h^n\|\|\nabla\phi_h^n\|\|\mu_h^n\|_1\\
		\leq &~\frac{\tau}{2}\left(\|\nabla\Phi_{\mathbf{u}}^{n+1}\|^2+\|\nabla\Theta_{\tilde{\mathbf{u}}}^{n+1}\|^2\right)+\frac{\tau}{3}\|e_{\mu}^n\|_1^2+C\tau^3\|\mathbf{u}_t\|_{L^{\infty}(0,T;L^2(\Omega))}^2\|\nabla\phi_h^n\|^2\|\mu_h^n\|_1^2\\
		&~+C\tau\left(|e_r^{n+1}|^2+\|e_{\phi}^n\|_1^2\right)\left(\|\nabla\phi^n\|^2\|\nabla\mathbf{u}^n\|^2+\|\mu_h^n\|_1^2\|\nabla\mathbf{u}^n\|^2+\|\nabla\phi_h^n\|^2\|\mu_h^n\|_1^2+\|\nabla\tilde{\mathbf{u}}_h^{n+1}\|^2+C\right).
	\end{aligned}
\end{equation}
Combining the above inequalities \eqref{eq_Tr1_boundedness_esitmation}-\eqref{eq_Tr3_transformation_equations_sub002} with \eqref{eq_multiply_2tau_ern1}, and summing up from $n=0$ to $m$, we have
\begin{flalign}
	|e_r^{m+1}|^2&~+\sum_{n=0}^{m}|e_r^{n+1}-e_r^n|^2\leq |e_r^0|^2+\tau\sum_{n=0}^{m}\|e_{\mu}^{n+1}\|_1^2+C\tau\sum_{n=0}^{m}\left(\|R_{\phi}^n\|_{-1}^2+\|R_r^n\|^2+\|T_1^n\|^2\right)\notag\\
	&~+C\tau\sum_{n=0}^{m}\left(\|\phi_t\|_{L^{\infty}(0,T;H^1(\Omega))}^2+\|e_{\phi}^n\|^2+\|e_{\phi}^{n+1}\|^2+\tau^2\|\phi_t\|_{L^{\infty}(0,T;H^1(\Omega))}^2\right)\notag\\
	&~
	+C\tau^3\sum_{n=0}^{m}\left(\|\phi^{n+1}\|_2^2\|\mathbf{u}_{t}\|_{L^{\infty}(0,T;L^2(\Omega))}^2+\|A\mathbf{u}^n\|^2\|\phi_t\|_{L^{\infty}(0,T;H^1(\Omega))}^2\right)\notag\\
	&~+C\tau\sum_{n=0}^{m}\left(\|\phi^n\|_2^2+\|\nabla\mathbf{u}_h^n\|^2+\|\nabla\mathbf{u}_h^n\|^2\|\phi_h^n\|_1^2\right)
	\left(\|\Theta_{\mathbf{u}}^n\|^2+\|\Phi_{\mathbf{u}}^n\|^2+\|\Phi_{\phi}^n\|_1^2\right.\notag\\
	&~\left.\qquad+\|\Theta_{\phi}^n\|_1^2+|e_r^{n+1}|^2\right)+\frac{\tau}{2}\sum_{n=0}^{m}\left(\|\nabla\Phi_{\mathbf{u}}^{n+1}\|^2+\|\nabla\Theta_{\tilde{\mathbf{u}}}^{n+1}\|^2\right)\\
	&~+C\tau^3\sum_{n=0}^{m}\|\mu_t\|_{L^{\infty}(0,T;H^1(\Omega))}^2\|\nabla\phi_h^n\|^2\|\nabla\mathbf{u}_h^n\|^2+C\tau\sum_{n=0}^{m}\left(\|\nabla\mathbf{u}_h^n\|^2\|\nabla\phi_h^n\|^2+\|\mu_h^{n+1}\|_1^2\right)\notag\\
	&~
	+C\tau\sum_{n=0}^{m}\left(\|e_{\mathbf{u}}^n\|^2+\| e_{\phi}^n\|_1^2\right)\left(\|\phi^n\|_2^2\|\mu^n\|_1^2+\|\nabla\mathbf{u}_h^n\|^2\|\mu^n\|_1^2+\|\nabla\mathbf{u}_h^n\|^2\|\nabla\phi_h^n\|^2\right)\notag\\
	&~+C\tau^3\sum_{n=0}^{m}\|\mathbf{u}_t\|_{L^{\infty}(0,T;L^2(\Omega))}^2\|\nabla\phi_h^n\|^2\|\mu_h^n\|_1^2+C\tau\sum_{n=0}^{m}\left(|e_r^{n+1}|^2+\|e_{\phi}^n\|_1^2\right)\left(\|\nabla\phi^n\|^2\|\nabla\mathbf{u}^n\|^2\right.\notag\\
	&~\left.\qquad+\|\mu_h^n\|_1^2\|\nabla\mathbf{u}^n\|^2+\|\nabla\phi_h^n\|^2\|\mu_h^n\|_1^2+\|\nabla\tilde{\mathbf{u}}_h^{n+1}\|^2+C\right).\notag
\end{flalign}
Using the above Lemmas, we complete the proof of \eqref{eq_r_boundedness_error_estimate}.
We next give the error estimate of the scalar auxiliary variable $\rho$.
Multiplying \eqref{eq_error_equations_rho} by $2\tau e_{\rho}^{n+1}$, we have
\begin{equation}
	\label{eq_multiplying_tau_e_rho_n1_results}
	\begin{aligned}
		|e_{\rho}^{n+1}|^2&~-|e_{\rho}^n|^2+|e_{\rho}^{n+1}-e_{\rho}^n|^2=\frac{\tau e_{\rho}^{n+1}}{\rho^{n+1}}\left(\delta_{\tau}\mathbf{u}^{n+1},\mathbf{u}^{n+1}\right)-\frac{\tau e_{\rho}^{n+1}}{\rho_h^{n+1}}\left(\frac{\tilde{\mathbf{u}}_h^{n+1}-\mathbf{u}_h^n}{\tau},\tilde{\mathbf{u}}_h^{n+1}\right)\\
		&~+\frac{2\tau e_{\rho}^{n+1}}{\sqrt{E_2^n}}\left(\mathbf{u}^n\cdot\nabla\mathbf{u}^n,\mathbf{u}^n\right)-\frac{2\tau e_{\rho}^{n+1}}{\sqrt{E_{2h}^n}}\left(\mathbf{u}_h^n\cdot\nabla\mathbf{u}_h^n,\tilde{\mathbf{u}}_h^{n+1}\right)+
		2\tau e_{\rho}^{n+1}\left(R_{\rho}^n-T_2^n\right)=\sum_{i=1}^{3}T_{\rho i}.
	\end{aligned}
\end{equation}
We can first derive the following result by  $\rho^{n+1}=\sqrt{E_2^{n+1}}$ and \eqref{eq_boundedness_Tb2_00003_E1_boundedness_term},
\begin{equation}
	\begin{aligned}
		\bigg|\frac{1}{\rho^{n+1}}-\frac{1}{\rho_h^{n+1}}\bigg|=&~\bigg|\frac{E_2^{n+1}-E_{2h}^{n+1}}{\rho^{n+1}\rho_h^{n+1}\left(\rho^{n+1}+\rho_h^{n+1}\right)}\bigg|
		\leq C\|e_{\mathbf{u}}^{n+1}\|.
	\end{aligned}
\end{equation}
The term $T_{\rho 1}$ is bounded by
\begin{equation}
	\label{eq_T_rho1_boundedness}
	\begin{aligned}
		\big|T_{\rho 1}\big|=&~\bigg|\frac{\tau e_{\rho}^{n+1}}{\rho^{n+1}}\left(\delta_{\tau}\mathbf{u}^{n+1},\mathbf{u}^{n+1}\right)-\frac{\tau e_{\rho}^{n+1}}{\rho_h^{n+1}}\left(\frac{\tilde{\mathbf{u}}_h^{n+1}-\mathbf{u}_h^n}{\tau},\tilde{\mathbf{u}}_h^{n+1}\right)\bigg|\\
		\leq&~C\tau|e_{\rho}^{n+1}|\|\mathbf{u}^{n+1}\|\|\mathbf{u}_t\|_{L^{\infty}(0,T;L^2(\Omega))}\\
		&~+C|e_{\rho}^{n+1}|\|\nabla\tilde{\mathbf{u}}_h^{n+1}\|\left(\|\nabla e_{\tilde{\mathbf{u}}}^{n+1}\|+\|\nabla e_{\mathbf{u}}^n\|+\tau\|\mathbf{u}_t\|_{L^{\infty}(0,T;\mathbf{H}^1(\Omega))}\right)\\
		\leq &~\frac{\tau}{4}|e_{\rho}^{n+1}|^2++C\tau\left(\|\mathbf{u}^{n+1}\|^2+\|\nabla\tilde{\mathbf{u}}_h^{n+1}\|^2+\|\nabla\Phi_{\mathbf{u}}^{n+1}\|^2+\|\nabla\Theta_{\tilde{\mathbf{u}}}^{n+1}\|^2+\|\nabla\Phi_{\mathbf{u}}^n\|^2+\|\nabla \Theta_{\mathbf{u}}^{n}\|^2\right)\\
		&~+C\tau^3\|\mathbf{u}_t\|_{L^{\infty}(0,T;\mathbf{H}^1(\Omega))}^2.
	\end{aligned}
\end{equation}
Similar to the \eqref{eq_boundedness_Tr7_00003_E2_boundedness_term}, we have
\begin{equation}
	\begin{aligned}
			\bigg|\frac{1}{\sqrt{E_2^n}}-\frac{1}{\sqrt{E_{2h}^n}}\bigg|=&~\frac{|E_{2h}^n-E_2^{n}|}{\sqrt{E_2^{n}}\sqrt{E_{2h}^{n}}\left(\sqrt{E_2^{n}}+\sqrt{E_{2h}^n}\right)}
		\leq C\|e_{\mathbf{u}}^n\|.
	\end{aligned}
\end{equation}
The term $T_{\rho2}$ is estimated by
\begin{equation}
	\label{eq_T_rho2_boundedness}
	\begin{aligned}
		\big|T_{\rho2}\big|=&~\bigg|\frac{2\tau e_{\rho}^{n+1}}{\sqrt{E_2^n}}\left(\mathbf{u}^n\cdot\nabla\mathbf{u}^n,\mathbf{u}^n\right)-\frac{2\tau e_{\rho}^{n+1}}{\sqrt{E_{2h}^n}}\left(\mathbf{u}_h^n\cdot\nabla\mathbf{u}_h^n,\tilde{\mathbf{u}}_h^{n+1}\right)\bigg|\\
		\leq&~\frac{C\tau|e_{\rho}^{n+1}|}{\sqrt{E_2^n}}\bigg|\left(e_{\mathbf{u}}^n\cdot\nabla\mathbf{u}^{n}+\mathbf{u}_h^n\cdot\nabla e_{\mathbf{u}}^n,\mathbf{u}^n\right)+\left(\mathbf{u}_h^n\cdot\nabla\mathbf{u}_h^n,\mathbf{u}^n-\mathbf{u}^{n+1}+e_{\tilde{\mathbf{u}}}^{n+1}\right)\bigg|\\
		&~+\left(\frac{\tau|e_{\rho}^{n+1}|}{\sqrt{E_2^n}}-\frac{\tau|e_{\rho}^{n+1}|}{\sqrt{E_{2h}^n}}\right)\bigg|\left(\mathbf{u}_h^n\cdot\nabla\mathbf{u}_h^n,\tilde{\mathbf{u}}_h^{n+1}\right)\bigg|\\
		\leq &~C\tau|e_{\rho}^{n+1}|\left(\|e_{\mathbf{u}}^n\|\|A\mathbf{u}^n\|\left(\|\nabla\mathbf{u}^n\|+\|\nabla\mathbf{u}_h^n\|\right)+\|\mathbf{u}_h^n\|\|\nabla\mathbf{u}_h^n\|\left(\tau\|\mathbf{u}_t\|_{L^{\infty}(0,T;\mathbf{H}^1(\Omega))}+\|\nabla e_{\tilde{\mathbf{u}}}^{n+1}\|\right)\right)\\
		&~+C\tau|e_{\rho}^{n+1}|\|e_{\mathbf{u}}^n\|\|\mathbf{u}_h^n\|\|\nabla\mathbf{u}_h^n\|\|\nabla\tilde{\mathbf{u}}_h^{n+1}\|\\
		\leq &~\frac{\tau}{2}\left(\|\nabla\Phi_{\mathbf{u}}^{n+1}\|^2+\|\nabla \Theta_{\tilde{\mathbf{u}}}^{n+1}\|^2\right)+C\tau^3\|\mathbf{u}_t\|_{L^{\infty}(0,T;\mathbf{H}^1(\Omega))}^2\|\mathbf{u}_h^n\|^2\|\nabla\mathbf{u}_h^n\|^2\\
		&~+C\tau\left(\|A\mathbf{u}^n\|^2+\|\mathbf{u}_h^n\|^2\|\nabla\mathbf{u}_h^n\|^2+\|\nabla\tilde{\mathbf{u}}_h^{n+1}\|^2+1\right)|e_{\rho}^{n+1}|^2\\
		&~+C\tau\left(\|\Phi_{\mathbf{u}}^n\|^2+\|\Theta_{{\mathbf{u}}}\|^2\right)\left(\|\nabla\mathbf{u}^n\|^2+\|\mathbf{u}_h^n\|^2\|\nabla\mathbf{u}_h^n\|^2\right),
	\end{aligned}
\end{equation}
and  the term $T_{\rho 3}$ is bounded by
\begin{equation}
	\label{eq_T_rho3_boundedness}
	\begin{aligned}
		\big|T_{\rho 3}\big|=&~\bigg|2\tau e_{\rho}^{n+1}\left(R_{\rho}^n-T_2^n\right)\bigg|\leq C\tau| e_{\rho}^{n+1}|\left(\|R_{\rho}^n\|+\|T_2^n\|\right)\\
		\leq &~\frac{\tau}{4}| e_{\rho}^{n+1}|^2+C\tau\left(\|R_{\rho}^n\|^2+\|T_2^n\|^2\right).
	\end{aligned}
\end{equation}
Combining the above inequalities \eqref{eq_T_rho1_boundedness}-\eqref{eq_T_rho3_boundedness} with
\eqref{eq_multiplying_tau_e_rho_n1_results}, and summing up $n$ from $0$ to $m$, we have
\begin{equation}
	\label{eq_combining_eq_T_rho1_eq_T_rho3_boundedness}
	\begin{aligned}
		|e_{\rho}^{m+1}|^2\leq&~ |e_{\rho}^{0}|^2+C\tau\sum_{n=0}^{n}\left(\|\mathbf{u}^{n+1}\|^2+\|\nabla\Phi_{\mathbf{u}}^{n+1}\|^2+\|\nabla\Theta_{\tilde{\mathbf{u}}}^{n+1}\|^2+\|\nabla\Phi_{\mathbf{u}}^n\|^2+\|\nabla \Theta_{\mathbf{u}}^{n}\|^2\right)\\
		&~+C\tau^3\sum_{n=0}^{n}\|\mathbf{u}_t\|_{L^{\infty}(0,T;\mathbf{H}^1(\Omega))}^2+\frac{\tau}{2}\sum_{n=0}^{n}\left(|e_{\rho}^{n+1}|^2+\|\nabla\Phi_{\mathbf{u}}^{n+1}\|^2+\|\nabla \Theta_{\tilde{\mathbf{u}}}^{n+1}\|^2\right)\\
		&~+C\tau^3\sum_{n=0}^{n}\|\mathbf{u}_t\|_{L^{\infty}(0,T;\mathbf{H}^1(\Omega))}^2\|\mathbf{u}_h^n\|^2\|\nabla\mathbf{u}_h^n\|^2+C\tau\sum_{n=0}^{n}\left(\|R_{\rho}^n\|^2+\|T_2^n\|^2\right)\\
		&~+C\tau\sum_{n=0}^{n}\left(\|A\mathbf{u}^n\|^2+\|\mathbf{u}_h^n\|^2\|\nabla\mathbf{u}_h^n\|^2+\|\nabla\tilde{\mathbf{u}}_h^{n+1}\|^2+1\right)|e_{\rho}^{n+1}|^2\\
		&~+C\tau\sum_{n=0}^{n}\left(\|\Phi_{\mathbf{u}}^n\|^2+\|\Theta_{{\mathbf{u}}}\|^2\right)\left(\|\nabla\mathbf{u}^n\|^2+\|\mathbf{u}_h^n\|^2\|\nabla\mathbf{u}_h^n\|^2\right).
	\end{aligned}
\end{equation}
Using the above Lemmas, we complete the proof of \eqref{eq_rho_boundedness_error_estimate}.
\end{proof}
Thus, using the Lemmas \ref{lemma_Theta_phi_H1}-\ref{lemma_boundedness_er_rho} and 
\ref{lemma_discrete_Gronwall_inequation}, we obtain the following theorem.
\begin{Theorem}\label{theorem_e_phi_H1_boundedness}
	Under the assumption of \eqref{eq_varibles_satisfied_regularities}, for all $m\geq 0$, we have
	$$
		\begin{aligned}
			\|e_{\phi}^{m+1}\|_1^2+\|e_{\mathbf{u}}^{m+1}\|^2+|e_r^{m+1}|^2+|e_{\rho}^{m+1}|^2+\tau^2\|\nabla e_p^{n+1}\|^2&\\
			+\tau\sum_{n=0}^{m}\left(\|e_{\mu}^{n+1}\|_1^2+\|\nabla e_{\tilde{\mathbf{u}}}^{n+1}\|^2\right)&\leq C\left(\tau^2+h^{2r}\right),
		\end{aligned}
	$$
	where $C$ is positive constant that does not depend on $\tau$ and $h$.
\end{Theorem}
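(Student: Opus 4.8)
The plan is to assemble the three preceding lemmas into a single coupled inequality for the discrete error parts $\Theta_{\phi}$ and $\Theta_{\mathbf{u}}$ (together with $\Theta_{p}$ and $\Theta_{\tilde{\mathbf{u}}}$) and then to close it with the discrete Gr\"onwall inequality, Lemma \ref{lemma_discrete_Gronwall_inequation}. First I would record that each physical error splits as $e=\Phi+\Theta$, where the projection parts $\Phi_{\phi},\Phi_{\mu},\Phi_{\mathbf{u}},\Phi_{p}$ are bounded directly by the approximation estimates \eqref{eq_boundness_Ritz_0001}--\eqref{eq_boundness_Ritz_0004}, \eqref{eq_L2_projection_boundness001}--\eqref{eq_L2_projection_boundness002} and Lemma \ref{lemma_Stokes_projection_bound}; these contribute only $O(h^{r+1})$ (and $O(h^{r+2})$ for the velocity), which is absorbed into the target $C(\tau^2+h^{2r})$. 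Hence it suffices to control the discrete parts $\Theta$, and the theorem then follows by the triangle inequality $e=\Phi+\Theta$.

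Next I would add the conclusion of Lemma \ref{lemma_Theta_phi_H1} to that of Lemma \ref{lemma_Theta_u_p_L2}. The two blocks are coupled: the quantity $\tau\sum_{n=0}^{m}\|e_{\mu}^{n+1}\|_1^2$ sits on the left in Lemma \ref{lemma_Theta_phi_H1} but reappears on the right of Lemma \ref{lemma_Theta_u_p_L2}, while $\|\Theta_{\mathbf{u}}^n\|^2$, $\|\Theta_{\phi}^n\|_1^2$ and $\|\nabla\Theta_{\phi}^n\|^2$ cross between the two estimates. After a shift of the summation index, the right-hand copy of $\tau\sum\|e_{\mu}^n\|_1^2$ is absorbed into the left-hand $\tfrac12\tau\sum\|e_{\mu}^{n+1}\|_1^2$ produced by Lemma \ref{lemma_Theta_phi_H1}. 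I would then invoke Lemma \ref{lemma_boundedness_er_rho} to replace every occurrence of $|e_r^{n+1}|^2$ and $|e_{\rho}^{n+1}|^2$ by $C(\tau^2+h^{2(r+1)})$, which is again swallowed by the right-hand side. What remains is an inequality of Gr\"onwall type $a_{m+1}+\tau\sum_{n=0}^{m}b_n\le \tau\sum_{n=0}^{m}d_n a_n+C(\tau^2+h^{2r})$, with $a_n=\|\Theta_{\phi}^n\|_1^2+\|\Theta_{\mathbf{u}}^n\|^2+\tau^2\|\nabla\Theta_{p}^n\|^2$ and $b_n=\|e_{\mu}^{n+1}\|_1^2+\|\nabla\Theta_{\tilde{\mathbf{u}}}^{n+1}\|^2$.

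The crucial point, and the main obstacle, is to verify that the coefficients $d_n$, which are built from the factors $\|\phi^n\|_2^2$, $\|A\mathbf{u}^n\|^2$, $\|\nabla\mathbf{u}^n\|^2$, $\|\nabla\mathbf{u}_h^n\|^2$, $\|\mathbf{u}_h^n\|^2\|\nabla\mathbf{u}_h^n\|^2$ and $\|\mu_h^n\|_1^2$, are summable in the sense that $\tau\sum_{n=0}^{m}d_n\le C$ uniformly in $\tau$ and $h$. The smooth factors are controlled by the regularity assumption \eqref{eq_varibles_satisfied_regularities} (so that $\|\phi^n\|_2$, $\|A\mathbf{u}^n\|$, $\|\nabla\mathbf{u}^n\|$ are uniformly bounded), while the discrete factors are controlled by the a priori stability bounds of Lemma \ref{lemma_boundedness_E_phi_Deltaphi}, which furnish $\tau\sum\|\nabla\mathbf{u}_h^n\|^2\le C$, $\tau\sum\|\mu_h^n\|_1^2\le C$ and uniform control of $\|\phi_h^n\|_1$ and $\|\mathbf{u}_h^n\|$. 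This is precisely what guarantees that the Gr\"onwall exponential factor $\exp(\tau\sum\sigma_n d_n)$ stays bounded independently of the discretization parameters, once $\tau$ is taken small enough that $\tau d_n<1$.

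Finally I would apply Lemma \ref{lemma_discrete_Gronwall_inequation} to obtain $\|\Theta_{\phi}^{m+1}\|_1^2+\|\Theta_{\mathbf{u}}^{m+1}\|^2+\tau^2\|\nabla\Theta_{p}^{m+1}\|^2+\tau\sum_{n=0}^{m}(\|e_{\mu}^{n+1}\|_1^2+\|\nabla\Theta_{\tilde{\mathbf{u}}}^{n+1}\|^2)\le C(\tau^2+h^{2r})$, and then return to the physical errors through $e=\Phi+\Theta$, folding in the projection bounds together with the already-established estimates $|e_r^{m+1}|^2,|e_{\rho}^{m+1}|^2\le C(\tau^2+h^{2(r+1)})$ from Lemma \ref{lemma_boundedness_er_rho}. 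The pressure contribution $\tau^2\|\nabla e_p^{m+1}\|^2$ and the velocity contribution $\|\nabla e_{\tilde{\mathbf{u}}}^{n+1}\|^2$ are treated in the same way, writing $\nabla e_p=\nabla\Phi_p+\nabla\Theta_p$ and $\nabla e_{\tilde{\mathbf{u}}}=\nabla\Phi_{\mathbf{u}}+\nabla\Theta_{\tilde{\mathbf{u}}}$ and using Lemma \ref{lemma_Stokes_projection_bound}. This yields the stated optimal $H^1$ estimate for $\phi$ together with the $O(\tau+h^r)$ estimates for $\mathbf{u}$, $r$, $\rho$, $p$ and $\mu$, completing the proof.
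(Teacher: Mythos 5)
Your proposal is correct and follows essentially the same route as the paper, which simply asserts that the theorem follows by combining Lemmas \ref{lemma_Theta_phi_H1}--\ref{lemma_boundedness_er_rho} with the discrete Gr\"onwall Lemma \ref{lemma_discrete_Gronwall_inequation}; you have merely filled in the bookkeeping (absorbing the cross-coupled $\tau\sum\|e_{\mu}\|_1^2$ term, substituting the $e_r,e_\rho$ bounds, verifying $\tau\sum d_n\le C$ via Lemma \ref{lemma_boundedness_E_phi_Deltaphi} and the regularity assumption, and returning to $e=\Phi+\Theta$). The only minor imprecision is describing the projection contributions as $O(h^{r+1})$: for the $H^1$-norm pieces they are only $O(h^r)$, but this is exactly what the stated bound $C(\tau^2+h^{2r})$ accommodates.
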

To obtain the optimal $L^2$ error estimates of $\phi$, $\mu$, $\mathbf{u}$ and $p$, we also need the following lemmas.
\begin{Lemma}\label{lemma_delta_tilde_u_L2_boundedness}
	Under the assumption of regularity \eqref{eq_varibles_satisfied_regularities} and $\tau=O(h)$, for all $m\geq 0$, we have
	\begin{equation}
		\|\nabla\tilde{\mathbf{u}}_h^{m+1}\|^2+\tau\sum_{n=0}^{m}\|A_h\tilde{\mathbf{u}}_h^{n+1}\|^2\leq C\tau,
	\end{equation}
	where $C$ is positive constant that  does not depend on $\tau$ and $h$. 
\end{Lemma}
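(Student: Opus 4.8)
The plan is to derive a higher-order (discrete $H^2$) energy estimate for $\tilde{\mathbf{u}}_h^{n+1}$ by testing the discrete momentum equation \eqref{eq_fully_discrete_scheme_u} with the discrete Stokes operator applied to the intermediate velocity. Concretely, with the parameters normalized to $1$, I would take $\mathbf{v}_h=2\tau A_h\tilde{\mathbf{u}}_h^{n+1}$. The viscous term then produces the dissipation $2\tau\|A_h\tilde{\mathbf{u}}_h^{n+1}\|^2$ through \eqref{eq_discrete_laplace_operator_norms_differences_0001}, while the discrete time-difference term, using the identity $(\mathbf{w},A_h\mathbf{z})=(\nabla\mathbf{w},\nabla\mathbf{z})$ from \eqref{eq_discrete_Laplace_operator_0002} and polarization, gives $\|\nabla\tilde{\mathbf{u}}_h^{n+1}\|^2-\|\nabla\mathbf{u}_h^n\|^2+\|\nabla(\tilde{\mathbf{u}}_h^{n+1}-\mathbf{u}_h^n)\|^2$. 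A convenient simplification is that the pressure term drops out: since $A_h\tilde{\mathbf{u}}_h^{n+1}$ lies in the range of $\mathbf{P}_h$ and is therefore discretely divergence free, $(\nabla p_h^n,A_h\tilde{\mathbf{u}}_h^{n+1})=-(p_h^n,\nabla\cdot A_h\tilde{\mathbf{u}}_h^{n+1})=0$.

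Next I would bound the two nonlinear right-hand terms and absorb their leading parts into the dissipation. For the Cahn--Hilliard coupling term, Hölder together with the Sobolev embeddings \eqref{eq_boundness_basic_inequalities_0001}, \eqref{eq_boundness_basic_inequalities_0003} and the bounds $\|\nabla\phi_h^n\|\le C$, $\|\Delta_h\phi_h^n\|^2\le C\tau$ of Lemma \ref{lemma_boundedness_E_phi_Deltaphi} give $\|\mu_h^n\nabla\phi_h^n\|^2\le C\|\mu_h^n\|_1^2$, so a Young inequality yields $\tfrac{\tau}{C}\|A_h\tilde{\mathbf{u}}_h^{n+1}\|^2+C\tau\|\mu_h^n\|_1^2$, whose second piece is summable because $\tau\sum_n\|\mu_h^{n+1}\|_1^2\le C$. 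For the convective term I would write $|(\mathbf{u}_h^n\cdot\nabla\mathbf{u}_h^n,A_h\tilde{\mathbf{u}}_h^{n+1})|\le\|\mathbf{u}_h^n\|_{L^4}\|\nabla\mathbf{u}_h^n\|_{L^4}\|A_h\tilde{\mathbf{u}}_h^{n+1}\|$ and apply the Ladyzhenskaya estimates $\|\mathbf{u}_h^n\|_{L^4}\le C\|\mathbf{u}_h^n\|^{1/2}\|\nabla\mathbf{u}_h^n\|^{1/2}$ and $\|\nabla\mathbf{u}_h^n\|_{L^4}\le C\|\nabla\mathbf{u}_h^n\|^{1/2}\|A_h\mathbf{u}_h^n\|^{1/2}$ with $\|\mathbf{u}_h^n\|\le C$; after Young's inequality this leaves $C\tau\|\nabla\mathbf{u}_h^n\|^2\|A_h\mathbf{u}_h^n\|$, which I split once more so as to absorb a multiple of $\tau\|A_h\mathbf{u}_h^n\|^2$ into the dissipation and retain $C\tau\|\nabla\mathbf{u}_h^n\|^4$.

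Closing the estimate requires transferring the $H^1$ and $H^2$ bounds between $\mathbf{u}_h^n$ and $\tilde{\mathbf{u}}_h^n$ across the pressure-correction step \eqref{eq_fully_discrete_scheme_u_p_correction}, $\mathbf{u}_h^n=\tilde{\mathbf{u}}_h^n-\tau\nabla(p_h^n-p_h^{n-1})$; this is exactly where $\tau=O(h)$ is indispensable. An inverse inequality gives $\tau\|\nabla^2(p_h^n-p_h^{n-1})\|\le C\tau h^{-1}\|\nabla(p_h^n-p_h^{n-1})\|\le C\|\nabla(p_h^n-p_h^{n-1})\|$, so that $\|\nabla\mathbf{u}_h^n\|$ and $\|A_h\mathbf{u}_h^n\|$ are controlled by $\|\nabla\tilde{\mathbf{u}}_h^n\|$, $\|A_h\tilde{\mathbf{u}}_h^n\|$ plus pressure-increment contributions that are bounded by the energy estimate $\tau^2\|\nabla p_h^{n+1}\|^2\le C$ from Lemma \ref{lemma_boundedness_E_phi_Deltaphi}. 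This both makes $\|\nabla\mathbf{u}_h^n\|^2$ interchangeable with $\|\nabla\tilde{\mathbf{u}}_h^n\|^2$ in the telescoping (up to summable remainders) and lets the absorbed $\tau\|A_h\mathbf{u}_h^n\|^2$ be folded into the dissipation after an index shift.

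Finally I would sum over $n=0,\dots,m$, leaving $\|\nabla\tilde{\mathbf{u}}_h^{m+1}\|^2+2\tau\sum_n\|A_h\tilde{\mathbf{u}}_h^{n+1}\|^2$ on the left, and apply the discrete Gr\"onwall Lemma \ref{lemma_discrete_Gronwall_inequation} to the residual $C\tau\sum_n\|\nabla\mathbf{u}_h^n\|^4=C\tau\sum_n\|\nabla\mathbf{u}_h^n\|^2\cdot\|\nabla\mathbf{u}_h^n\|^2$, which has precisely the required form with Gr\"onwall multiplier $d_n=C\|\nabla\mathbf{u}_h^n\|^2$ whose weighted sum $\tau\sum_n\|\nabla\mathbf{u}_h^n\|^2$ is uniformly bounded by the basic energy stability \eqref{eq_boundness_E0}. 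I expect the main obstacle to be this convective term at the $\|A_h\cdot\|$ level: it is the only contribution whose control simultaneously needs the time-summed bound on $\|\nabla\tilde{\mathbf{u}}_h^{n+1}\|$ as a Gr\"onwall coefficient and the $\tau=O(h)$ inverse estimate to move $H^1$/$H^2$ norms across the projection step, whereas every other term reduces to Young's inequality and the a priori bounds already available in Lemma \ref{lemma_boundedness_E_phi_Deltaphi}.
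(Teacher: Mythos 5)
Your proposal follows essentially the same route as the paper: test \eqref{eq_fully_discrete_scheme_u} with $2\tau A_h\tilde{\mathbf{u}}_h^{n+1}$, extract the telescoping $H^1$ difference and the dissipation $2\tau\|A_h\tilde{\mathbf{u}}_h^{n+1}\|^2$, bound the Cahn--Hilliard coupling term by $C\tau\|\mu_h^n\|_1^2$ via Lemma \ref{lemma_boundedness_E_phi_Deltaphi}, bound the convection term by $C\|\mathbf{u}_h^n\|^{1/2}\|\nabla\mathbf{u}_h^n\|\,\|A_h\mathbf{u}_h^n\|^{1/2}\|A_h\tilde{\mathbf{u}}_h^{n+1}\|$ (you via Ladyzhenskaya on $\nabla\mathbf{u}_h^n$, the paper via Agmon on $\mathbf{u}_h^n$ --- these are interchangeable in 2D), and close with the discrete Gr\"onwall lemma using $\tau\sum_n\|\mathbf{u}_h^n\|^2\|\nabla\mathbf{u}_h^n\|^2\leq C$ from \eqref{eq_boundness_E0}. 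The two places you diverge are both defensible and arguably cleaner: you kill the pressure term by the discrete divergence-free property of $A_h\tilde{\mathbf{u}}_h^{n+1}$, whereas the paper keeps it, writes $p_h^n=p^n-e_p^n$, and invokes Theorem \ref{theorem_e_phi_H1_boundedness} together with $\tau=O(h)$ to make $\tau\sum_n\|\nabla e_p^n\|^2$ summable; and you explicitly transfer the residual $\tau\|A_h\mathbf{u}_h^n\|^2$ back to $\tau\|A_h\tilde{\mathbf{u}}_h^n\|^2$ through the correction step \eqref{eq_fully_discrete_scheme_u_p_correction} with an inverse inequality, a point the paper leaves implicit (its final displayed bound simply carries $\tau\sum_n\|A_h\mathbf{u}_h^n\|^2$ on the right without explaining its absorption). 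One caution on that last step: make sure the Young constant in front of $\tau\|A_h\mathbf{u}_h^n\|^2$ is taken strictly smaller than the dissipation coefficient before the index shift, otherwise the absorption fails; with that fixed, your argument is complete.
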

\begin{proof}
	Taking inner product of \eqref{eq_fully_discrete_scheme_u} with $2\tau A_h\tilde{\mathbf{u}}_h^{n+1}$,  we obtain
	\begin{equation}
		\label{eq_2tau_Ah_tilde_u_taking_product_fully_discrete_scheme_u}
		\begin{aligned}
			&\|\nabla\tilde{\mathbf{u}}_h^{n+1}\|^2-\|\nabla\mathbf{u}_h^n\|^2+\|\nabla\tilde{\mathbf{u}}_h^{n+1}-\nabla\mathbf{u}_h^n\|^2+2\tau\|A_h\tilde{\mathbf{u}}_h^{n+1}\|^2\\
			=&~-2\tau\left(\nabla p_h^n,A_h\tilde{\mathbf{u}}_h^{n+1}\right)+\frac{\tau r_h^{n+1}}{\sqrt{E_{1h}^n}}\left(\mu_h^n\nabla\phi_h^n,A_h\tilde{\mathbf{u}}_h^{n+1}\right)-\frac{2\tau\rho_h^{n+1}}{\sqrt{E_{2h}^n}}\left(\mathbf{u}_h^n\cdot\nabla\mathbf{u}_h^n,A_h\tilde{\mathbf{u}}_h^{n+1}\right).
		\end{aligned}
	\end{equation}
	Thus, we have the following boundedness of the right-hand side terms of \eqref{eq_2tau_Ah_tilde_u_taking_product_fully_discrete_scheme_u}.
	Using the Young inequality, Cauchy-Schwarz inequality and Theorem \ref{theorem_e_phi_H1_boundedness}, we obtain
	\begin{equation}
		\label{eq_boundedness_2tau_Ah_tilde_u_taking_product_fully_discrete_scheme_u001}
		\begin{aligned}
			\big|-2\tau\left(p_h^n,A_h\tilde{\mathbf{u}}_h^{n+1}\right)\big|
			\leq &~C\tau\big|\left(\nabla \left(e_p^n+p^n\right), A_h\tilde{\mathbf{u}}_h^{n+1}\right)\big|\\
			\leq&~C\tau\left(\|\nabla e_p^n\|^2+\|\nabla p^n\|^2\right)+\frac{\tau}{3}\|A_h\tilde{\mathbf{u}}_h^{n+1}\|^2.
		\end{aligned}
	\end{equation}
	And using the \eqref{eq_boundness_basic_inequalities_0001}-\eqref{eq_boundness_basic_inequalities_0001} and Lemma \ref{lemma_boundedness_E_phi_Deltaphi},   we obtain
	\begin{equation}
		\label{eq_boundedness_2tau_Ah_tilde_u_taking_product_fully_discrete_scheme_u002}
		\begin{aligned}
			\bigg|\frac{\tau r_h^{n+1}}{\sqrt{E_{1h}^n}}\left(\mu_h^n\nabla\phi_h^n,A_h\tilde{\mathbf{u}}_h^{n+1}\right)\bigg|\leq &~\frac{C\tau|r_h^{n+1}|}{\sqrt{E_{1h}^n}}\|\mu_h^n\|_{L^4}\|\nabla\phi_h^n\|_{L^4}\|A_h\tilde{\mathbf{u}}_h^{n+1}\|\\
			\leq &~\frac{\tau}{3}\|A_h\tilde{\mathbf{u}}_h^{n+1}\|^2+\frac{C\tau}{C_0}\|\mu_h^n\|_1^2,
		\end{aligned}
	\end{equation}
	and 
	\begin{equation}
		\label{eq_boundedness_2tau_Ah_tilde_u_taking_product_fully_discrete_scheme_u003}
		\begin{aligned}
			\bigg|-\frac{2\tau\rho_h^{n+1}}{\sqrt{E_{2h}^n}}\left(\mathbf{u}_h^n\cdot\nabla\mathbf{u}_h^n,A_h\tilde{\mathbf{u}}_h^{n+1}\right)\bigg|
			\leq &~\frac{C\tau|\rho_h^{n+1}|}{\sqrt{E_{2h}^n}}\|\mathbf{u}_h^n\|_{L^{\infty}}\|\nabla\mathbf{u}_h^n\|\|A_h\tilde{\mathbf{u}}_h^{n+1}\|\\
			\leq &~\frac{C\tau|\rho_h^{n+1}|}{\sqrt{E_{2h}^n}}\|A_h\mathbf{u}_h^{n}\|^{\frac{1}{2}}\|\mathbf{u}_h^n\|^{\frac{1}{2}}\|\mathbf{u}_h^n\|_1\|A_h\tilde{\mathbf{u}}_h^{n+1}\|\\
			\leq &~\frac{\tau}{3}\|A_h\tilde{\mathbf{u}}_h^{n+1}\|^2+\frac{C\tau}{C_0}\|\mathbf{u}_h^n\|\|\nabla\mathbf{u}_h^n\|^2\|A_h\mathbf{u}_h^n\|\\
			\leq &~\frac{\tau}{3}\|A_h\tilde{\mathbf{u}}_h^{n+1}\|^2+\tau\|A_h\mathbf{u}_h^n\|^2+\frac{C\tau}{C_0}\|\mathbf{u}_h^n\|^2\|\nabla\mathbf{u}_h^n\|^4.
		\end{aligned}
	\end{equation}
	Combining the above inequalities \eqref{eq_boundedness_2tau_Ah_tilde_u_taking_product_fully_discrete_scheme_u001}-\eqref{eq_boundedness_2tau_Ah_tilde_u_taking_product_fully_discrete_scheme_u003} with \eqref{eq_2tau_Ah_tilde_u_taking_product_fully_discrete_scheme_u}, using the $\tau=O(h)$ and Theorem \ref{theorem_e_phi_H1_boundedness}, then summing up $n$ from $0$ to $m$, we have
	\begin{equation}
		\begin{aligned}
			\|\nabla\tilde{\mathbf{u}}_h^{m+1}\|^2&~+\sum_{n=0}^{m}\|\nabla\tilde{\mathbf{u}}_h^{n+1}-\nabla\mathbf{u}_h^n\|^2+\tau\sum_{n=0}^{m}\|A_h\tilde{\mathbf{u}}_h^{n+1}\|^2\\
			\leq &~\|\nabla\tilde{\mathbf{u}}_h^0\|^2+\tau\sum_{n=0}^{m}\|A_h\mathbf{u}_h^n\|^2+\frac{C\tau}{C_0}\left(\|\mu_h^n\|^2_1+\|\mathbf{u}_h^n\|^2\|\nabla\mathbf{u}_h^n\|^4\right)+C.
		\end{aligned}
	\end{equation}
	Using the discrete Gr\"{o}nwall's Lemma \ref{lemma_discrete_Gronwall_inequation} and  Lemma \ref{lemma_boundedness_E_phi_Deltaphi}, we obtain the desired result.
\end{proof}
\begin{Theorem}\label{theorem_e_phi_u_L2_error_estimates}
	Suppose that the system \eqref{eqCHNS01} has a unique solution $\left(\phi,\mu,\mathbf{u},p\right)$ satisfying \eqref{eq_varibles_satisfied_regularities}. 
	Then the fully discrete scheme \eqref{eq_fully_discrete_scheme_phi}-\eqref{eq_fully_discrete_scheme_q} have a unique solution $\left(\phi_h^{n+1},\mu_h^{n+1},\mathbf{u}_h^{n+1},p_h^{n+1}\right) \in \mathcal{X}_h^r$ and the initial error is $0$ such that
	\begin{flalign}
		\label{eq_e_phi_mu_L2_error_estimate}
		\|e_\phi^{m+1}\|^2+\tau\sum_{n=0}^{m}\|e_{\mu}^{n+1}\|^2\leq C\left(\tau^2+h^{2(r+1)}\right),\\
		\label{eq_e_u_L2_error_estimate}
		\|e_{\mathbf{u}}^{n+1}\| \leq C\left(\tau+\mathcal{E}_h\right),
	\end{flalign}
	for some $m\geq 0$  and $C$ is a positive constant independent of $\tau$ and $h$.
\end{Theorem}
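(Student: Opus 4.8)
Before the estimates I note that unique solvability and the vanishing of the initial error are immediate consequences of the construction: the scheme \eqref{eq_fully_discrete_scheme_phi}--\eqref{eq_fully_discrete_scheme_q} is linear and sequentially decoupled, the scalar relation \eqref{eq_fully_discrete_scheme_q} for $\rho_h^{n+1}$ is solvable because its discriminant satisfies $b^2-4ac\ge 0$, and the initial data are taken as the projections of $\phi^0,\mathbf{u}^0$. The analytic task is to upgrade the $H^1$/$L^2$ bounds of Theorem~\ref{theorem_e_phi_H1_boundedness}, which only carry the rate $h^{r}$, to optimal $L^2$ rates. Writing $e_\phi=\Phi_\phi+\Theta_\phi$, $e_\mu=\Phi_\mu+\Theta_\mu$, $e_{\mathbf{u}}=\Phi_{\mathbf{u}}+\Theta_{\mathbf{u}}$ and $e_p=\Phi_p+\Theta_p$, the projection parts $\Phi_\bullet$ are already optimal in $L^2$ by \eqref{eq_boundness_Ritz_0001}, \eqref{eq_L2_projection_boundness002} and Lemma~\ref{lemma_Stokes_projection_bound}, so everything reduces to bounding the finite-element parts $\Theta_\phi^{m+1}$ and $\Theta_{\mathbf{u}}^{m+1}$ in $L^2$.

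For the Cahn--Hilliard pair the extra power of $h$ is produced by a duality (discrete negative-norm) test instead of the quasi-projection of \cite{2023_CaiWentao_Optimal_L2_error_estimates_of_unconditionally_stable_finite_element_schemes_for_the_Cahn_Hilliard_Navier_Stokes_system}. I would test \eqref{eq_error_equations_phi} with $w_h=2\tau\Delta_h^{-1}\Theta_\phi^{n+1}$ and \eqref{eq_error_equations_mu} with $\varphi_h=2\tau\Theta_\phi^{n+1}$ and add. Two structural facts drive the computation: Ritz orthogonality annihilates the projection parts in every $H^1$-pairing, $(\nabla\Phi_\mu^{n+1},\nabla\chi_h)=(\nabla\Phi_\phi^{n+1},\nabla\chi_h)=0$ for $\chi_h\in S_h^r$; and, thanks to mass conservation (which keeps $\Theta_\phi^{n+1}$ mean-free), the identity \eqref{eq_discrete_Laplace_operator_0001} collapses the diffusion coupling to $(\nabla e_\mu^{n+1},\nabla\Delta_h^{-1}\Theta_\phi^{n+1})=-(\Theta_\mu^{n+1},\Theta_\phi^{n+1})$, so the indefinite $(e_\mu^{n+1},\Theta_\phi^{n+1})$ contributions cancel between the two tested equations. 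What survives on the left is the telescoping $\|\Theta_\phi^{n+1}\|_{-1}^2$ together with the full dissipation $2\tau(\|\nabla\Theta_\phi^{n+1}\|^2+\gamma\|\Theta_\phi^{n+1}\|^2)$, while on the right the consistency terms $R_\phi^n$ and $\delta_\tau\Phi_\phi^{n+1}$ enter only through negative norms, where \eqref{eq_boundness_Ritz_0004} and Lemma~\ref{lemma_truncation_boundedness} furnish the sharp factors $\mathcal{E}_h$ and $\tau$. The nonlinear and convective terms are split into a frozen-SAV-ratio part, controlled by the Lipschitz bound \eqref{eq_boundedness_Tb2_00003_E1_boundedness_term} for $1/\sqrt{E_1^{n+1}}-1/\sqrt{E_{1h}^n}$ and the estimate $|e_r^{n+1}|^2\le C(\tau^2+h^{2(r+1)})$ of Lemma~\ref{lemma_boundedness_er_rho}, and a solution-error part handled by \eqref{eq_nonlinear_term_f_boundedness0001}--\eqref{eq_nonlinear_term_f_boundedness0002} and the Sobolev inequalities \eqref{eq_boundness_basic_inequalities_0001}--\eqref{eq_boundness_basic_inequalities_0004}. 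Summation in $n$ and the discrete Gr\"onwall Lemma~\ref{lemma_discrete_Gronwall_inequation} then yield $\|\Theta_\phi^{m+1}\|_{-1}^2+\tau\sum_{n=0}^m(\|\nabla\Theta_\phi^{n+1}\|^2+\|\Theta_\phi^{n+1}\|^2)\le C(\tau^2+h^{2(r+1)})$; reinserting the accumulated $L^2$ control of $\Theta_\phi$ into \eqref{eq_error_equations_mu} gives $\tau\sum_n\|e_\mu^{n+1}\|^2\le C(\tau^2+h^{2(r+1)})$, and a refined variant of the same test keeping $\|\Theta_\phi^{n+1}\|^2$ on the left closes the pointwise bound $\|e_\phi^{m+1}\|^2$, establishing \eqref{eq_e_phi_mu_L2_error_estimate}.

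For the velocity I would re-run the pressure-correction energy estimate of Lemma~\ref{lemma_Theta_u_p_L2}, testing \eqref{eq_error_equations_u} with $2\tau\Theta_{\tilde{\mathbf{u}}}^{n+1}$ and recombining the $\Theta_{\mathbf{u}}$/$\Theta_p$ contributions through \eqref{eq_error_equations_u_p_splitting}, but now feeding in the optimal $\phi,\mu$ bounds just obtained and the $e_r,e_\rho$ estimates of Lemma~\ref{lemma_boundedness_er_rho}. The decisive refinement is to bound the capillary coupling $(e_\mu^n\nabla\phi^n,\Theta_{\tilde{\mathbf{u}}}^{n+1})$ by placing $e_\mu^n$ in $L^2$ (order $h^{r+1}$), $\nabla\phi^n$ in $L^\infty$ via the assumed regularity, and absorbing $\|\Theta_{\tilde{\mathbf{u}}}^{n+1}\|\le C\|\nabla\Theta_{\tilde{\mathbf{u}}}^{n+1}\|$ into the dissipation, rather than invoking $\|e_\mu^n\|_1$; the inertial term $\mathbf{u}_h^n\cdot\nabla\mathbf{u}_h^n$ is handled with the discrete Stokes regularity $\tau\sum_n\|A_h\tilde{\mathbf{u}}_h^{n+1}\|^2\le C\tau$ of Lemma~\ref{lemma_delta_tilde_u_L2_boundedness} under $\tau=O(h)$, while the Stokes-projection bounds of Lemma~\ref{lemma_Stokes_projection_bound} provide $\|\Phi_{\mathbf{u}}\|\le Ch^{r+2}$. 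A further discrete Gr\"onwall step gives $\|\Theta_{\mathbf{u}}^{m+1}\|\le C(\tau+\mathcal{E}_h)$, and, since $\|\Phi_{\mathbf{u}}^{m+1}\|\le Ch^{r+2}\le C\mathcal{E}_h$, the triangle inequality delivers \eqref{eq_e_u_L2_error_estimate}; the appearance of $\mathcal{E}_h$ rather than $h^{r+2}$ at $r=1$ is exactly the loss flagged in the introduction.

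The main obstacle is the simultaneous sharp treatment of the three coupled nonlinearities --- the transport $\tfrac{r}{\sqrt{E_1}}\mathbf{u}\cdot\nabla\phi$, the convection $\tfrac{\rho}{\sqrt{E_2}}\mathbf{u}\cdot\nabla\mathbf{u}$, and the capillary force $\tfrac{r}{\sqrt{E_1}}\mu\nabla\phi$ --- in the dual and negative norms without forfeiting a power of $h$. Each must be decomposed into a frozen-coefficient consistency part (absorbed using \eqref{eq_boundedness_Tb2_00003_E1_boundedness_term} and the auxiliary-variable estimates) and an interpolation/solution-error part, and the borderline term $(\mu_h^n\nabla e_\phi^n,\Theta_{\tilde{\mathbf{u}}}^{n+1})$ requires both the $L^\infty$ control of the discrete solution from Lemma~\ref{lemma_boundedness_E_phi_Deltaphi} and Lemma~\ref{lemma_delta_tilde_u_L2_boundedness} and the mesh relation $\tau=O(h)$. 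It is precisely at this juncture that the argument replaces the quasi-projection operator of \cite{2023_CaiWentao_Optimal_L2_error_estimates_of_unconditionally_stable_finite_element_schemes_for_the_Cahn_Hilliard_Navier_Stokes_system} by the negative-norm Ritz superconvergence \eqref{eq_boundness_Ritz_0002}, \eqref{eq_boundness_Ritz_0004}, and the bookkeeping needed to keep all such terms at order $\tau^2+h^{2(r+1)}$ (respectively $\tau+\mathcal{E}_h$ for the velocity) is the technical heart of the proof.
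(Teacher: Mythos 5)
Your overall strategy for \eqref{eq_e_phi_mu_L2_error_estimate} --- a discrete duality argument with $\Delta_h^{-1}$, the splitting of each nonlinearity into a frozen--SAV-ratio part controlled by \eqref{eq_boundedness_Tb2_00003_E1_boundedness_term} and Lemma~\ref{lemma_boundedness_er_rho} plus a solution-error part, the use of the negative-norm Ritz estimates \eqref{eq_boundness_Ritz_0002}, \eqref{eq_boundness_Ritz_0004} in place of a quasi-projection, and a final Gr\"onwall step --- is the same philosophy as the paper's. But your concrete choice of test functions, $w_h=2\tau\Delta_h^{-1}\Theta_\phi^{n+1}$ in \eqref{eq_error_equations_phi} and $\varphi_h=2\tau\Theta_\phi^{n+1}$ in \eqref{eq_error_equations_mu}, does not deliver the two stated conclusions. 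With that pairing the $(e_\mu^{n+1},\Theta_\phi^{n+1})$ contributions cancel \emph{identically} between the two tested equations, so no $\tau\|e_\mu^{n+1}\|^2$ dissipation ever appears on the left-hand side; what you obtain is a telescoping of $\|\Theta_\phi^{n+1}\|_{-1}^2$ plus $\tau\sum_n\|\Theta_\phi^{n+1}\|_{H^1}^2$. That controls $\Theta_\phi$ in $\ell^\infty(H^{-1})\cap\ell^2(H^1)$, and interpolation $\|v\|^2\le\|v\|_{-1}\|v\|_1$ combined with the $H^1$ bound $\|\Theta_\phi^{m+1}\|_1\le C(\tau+h^{r})$ of Lemma~\ref{lemma_Theta_phi_H1} only yields $\|\Theta_\phi^{m+1}\|^2\le C(\tau^2+h^{2r+1})$, which is short of $h^{2(r+1)}$ by a full power of $h$. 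Your proposed repair of "reinserting" the accumulated control into \eqref{eq_error_equations_mu} to recover $\tau\sum_n\|e_\mu^{n+1}\|^2$ also does not close: testing that equation by $\Theta_\mu^{n+1}$ produces $(\nabla\Theta_\phi^{n+1},\nabla\Theta_\mu^{n+1})$, which cannot be bounded by $\|\Theta_\mu^{n+1}\|$ without optimal-order control of $\|\Delta_h\Theta_\phi^{n+1}\|$, which you do not have. The "refined variant keeping $\|\Theta_\phi^{n+1}\|^2$ on the left" that you defer to is precisely the missing idea: the paper tests \eqref{eq_error_equations_phi} with $w_h=2\tau\Delta_h^{-1}\Theta_{\mu}^{n+1}$ and \eqref{eq_error_equations_mu} with $\varphi_h=2\tau\delta_{\tau}\Delta_h^{-1}\Theta_{\phi}^{n+1}$, so that the cross terms cancel by self-adjointness of $\Delta_h^{-1}$ while the diffusion term $-2\tau(e_\mu^{n+1},\Theta_\mu^{n+1})$ supplies $\tau(\|e_\mu^{n+1}\|^2+\|\Theta_\mu^{n+1}\|^2)$ on the left and the stiffness term telescopes $\|\Theta_\phi^{n+1}\|^2$ directly. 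Without exhibiting this pairing your argument for \eqref{eq_e_phi_mu_L2_error_estimate} has a genuine gap.

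For the velocity estimate \eqref{eq_e_u_L2_error_estimate} your route is essentially the paper's: the paper first adds \eqref{eq_error_equations_u_p_splitting} to \eqref{eq_error_equations_u} to form \eqref{eq_add_two_error_equations_u_splitting} and then tests with $2\tau\Theta_{\mathbf{u}}^{n+1}$, which is the same recombination you describe, and the auxiliary ingredients you cite (Lemma~\ref{lemma_boundedness_er_rho}, Lemma~\ref{lemma_delta_tilde_u_L2_boundedness} under $\tau=O(h)$, the Stokes projection bounds) are the right ones. One caution: bounding the capillary term by placing $e_\mu^n$ in $L^2$ caps that contribution at order $h^{r+1}$, whereas the claimed rate $\mathcal{E}_h=h^{r+2}$ for $r\ge2$ requires the $H^{-1}$ pairing $\|e_\mu^n\|_{-1}\|\phi^n\|_2\|\nabla\Theta_{\mathbf{u}}^{n+1}\|$ together with \eqref{eq_boundness_Ritz_0002}, which is how the paper keeps the extra power of $h$ at that step.
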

\begin{proof}
	Testing $w_h = 2\tau\Delta_h^{-1}\Theta_{\mu}^{n+1}$ in \eqref{eq_error_equations_phi}, $\varphi_h=2\tau\delta_{\tau}\Delta_h^{-1}\Theta_{\phi}^{n+1}$ in \eqref{eq_error_equations_mu}, respectively, using the \eqref{eq_discrete_Laplace_operator_0001} and
	\eqref{eq_discrete_Laplace_operator_0002},  we have
	\begin{equation}
		\label{eq_error_estimation_L2_phi}
		\begin{aligned}
			&\|\Theta_{\phi}^{n+1}\|^2-\|\Theta_{\phi}^n\|^2+\|\Theta_{\phi}^{n+1}-\Theta_{\phi}^n\|^2+\tau\left(\|e_{\mu}^{n+1}\|^2+\|\Theta_{\mu}^{n+1}\|^2\right)\\
			=&~\tau\|\Phi_{\mu}^{n+1}\|^2-2\tau\left(\left(R_{\phi}^n,\Delta_h^{-1}\Theta_{\mu}^{n+1}\right)+\left(\Phi_{\phi}^{n+1},\delta_{\tau}\Theta_{\phi}^{n+1}\right)\right)+2\tau\left(e_{\phi}^{n+1},\delta_{\tau}\Delta_h^{-1}\Theta_{\phi}^{n+1}\right)\\
			&~
			+\left(\frac{2\tau r^{n+1}}{\sqrt{E_1^{n+1}}}\mathbf{u}^{n+1}\cdot\nabla\phi^{n+1}-\frac{2\tau r_h^{n+1}}{\sqrt{E_{1h}^n}}\mathbf{u}_h^n\cdot\nabla\phi_h^n,\Delta_h^{-1}\Theta_{\mu}^{n+1}\right)\\
			&~+\left(\frac{2\tau r^{n+1}}{\sqrt{E_1^{n+1}}}F'(\phi^{n+1})-\frac{2\tau r_h^{n+1}}{\sqrt{E_{1h}^n}}F'(\phi_h^n),\delta_{\tau}\Delta_h^{-1}\Theta_{\phi}^{n+1}\right)\\
			=&~\tau\|\Phi_{\mu}^{n+1}\|^2+\sum_{i=1}^{4}T_{\phi i}.
		\end{aligned}
	\end{equation}
	We next estimate the terms $T_{\phi 1}\sim T_{\phi 4}$ on the right-hand side of the equation \eqref{eq_error_estimation_L2_phi} separately. Using \eqref{eq_boundness_basic_inequalities_0001}-\eqref{eq_boundness_basic_inequalities_0004}, we obtain
	\begin{equation}
		\begin{aligned}
			\big|T_{\phi 1}\big|=&~\bigg|-2\tau\left(\left(R_{\phi}^n,\Delta_h^{-1}\Theta_{\mu}^{n+1}\right)+\left(\Phi_{\phi}^{n+1},\delta_{\tau}\Theta_{\phi}^{n+1}\right)\right)\bigg|\\
			\leq &~\frac{\tau}{8}\|\Theta_{\mu}^{n+1}\|^2+\tau^2\|\delta_{\tau}\Theta_{\phi}^{n+1}\|^2+\|\Phi_{\phi}^{n+1}\|^2+C\tau\|R_{\phi}^n\|_{-1}^2.
		\end{aligned}
	\end{equation}
	By using the $r^{n+1}=\sqrt{E_1^{n+1}}$, the term $T_{\phi 2}$ is estimated as follows
	\begin{equation}
		\begin{aligned}
			\big|T_{\phi 2}\big|=&~\bigg|\left(\frac{2\tau r^{n+1}}{\sqrt{E_1^{n+1}}}\mathbf{u}^{n+1}\cdot\nabla\phi^{n+1}-\frac{2\tau r_h^{n+1}}{\sqrt{E_{1h}^n}}\mathbf{u}_h^n\cdot\nabla\phi_h^n,\Delta_h^{-1}\Theta_{\mu}^{n+1}\right)\bigg|\\
			= &~2\tau\bigg|\left(\mathbf{u}^{n+1}\cdot\nabla\phi^{n+1}-\mathbf{u}^n\cdot\nabla\phi^n+\mathbf{u}^n\cdot\nabla\phi^n-\mathbf{u}_h^n\cdot\nabla\phi_h^n,\Delta_h^{-1}\Theta_{\mu}^{n+1}\right)\\
			&~+\left(\frac{e_r^{n+1}}{\sqrt{E_1^{n+1}}}+\frac{r_h^{n+1}}{\sqrt{E_1^{n+1}}}-\frac{r_h^{n+1}}{\sqrt{E_{1h}^{n}}}\right)\left(\mathbf{u}_h^n\cdot\nabla\phi_h^n,\Delta_h^{-1}\Theta_{\mu}^{n+1}\right)\bigg|.
		\end{aligned}
	\end{equation}
	Using the above inequality \eqref{eq_boundedness_Tb2_00003_E1_boundedness_term},
	we have
	\begin{equation}
		\begin{aligned}
			&2\tau\big|\left(\mathbf{u}^{n+1}\cdot\nabla\phi^{n+1}-\mathbf{u}^n\cdot\nabla\phi^n,\Delta_h^{-1}\Theta_{\mu}^{n+1}\right)\big|\\
			=&~ 2\tau\big|\left(\left(\mathbf{u}^{n+1}-\mathbf{u}^n\right)\cdot\nabla\phi^{n+1}+\mathbf{u}^n\cdot\nabla\left(\phi^{n+1}-\phi^n\right),\Delta_h^{-1}\Theta_{\mu}^{n+1}\right)\big|\\
			\leq &~\frac{\tau}{8}\|\Theta_{\mu}^{n+1}\|^2+C\tau^3\left(\|\mathbf{u}_t\|_{L^{\infty}(0,T;H^1(\Omega))}^2\|\phi^{n+1}\|_1^2+\|\nabla\mathbf{u}^n\|^2\|\phi_{tt}\|_{L^{\infty}(0,T;L^2(\Omega))}^2\right),
		\end{aligned}
	\end{equation}
	and
	\begin{equation}
		\begin{aligned}
			&2\tau\big|\left(\mathbf{u}^n\cdot\nabla\phi^n-\mathbf{u}_h^n\cdot\nabla\phi_h^n,\Delta_h^{-1}\Theta_{\mu}^{n+1}\right)\big|\\
			=&~2\tau\big|\left(e_{\mathbf{u}}^n\cdot\nabla\phi^n+\mathbf{u}_h^n\cdot\nabla e_{\phi}^n,\Delta_h^{-1}\Theta_{\mu}^{n+1}\right)\big|\\
			\leq &~\frac{\tau}{8}\|\Theta_{\mu}^{n+1}\|^2+C\tau\left(\|\phi^n\|_1^2+\|\nabla\mathbf{u}_h^n\|^2\right)\left(\|\Phi_{\phi}^n\|^2+\|\Theta_{\phi}^n\|^2+\|\Phi_{\mathbf{u}}^n\|^2+\|\Theta_{\mathbf{u}}^n\|^2\right),
		\end{aligned}
	\end{equation}
	and
	\begin{equation}
		\begin{aligned}
			&2\tau\bigg|\left(\frac{e_r^{n+1}}{\sqrt{E_1^{n+1}}}+\frac{r_h^{n+1}}{\sqrt{E_1^{n+1}}}-\frac{r_h^{n+1}}{\sqrt{E_{1h}^{n}}}\right)\left(\mathbf{u}_h^n\cdot\nabla\phi_h^n,\Delta_h^{-1}\Theta_{\mu}^{n+1}\right)\bigg|\\
			\leq &~C\tau\left(\frac{|e_r^{n+1}|}{\sqrt{C_0}}+\|e_{\phi}^n\|+\tau\|\phi_t\|_{L^{\infty}(0,T;H^1(\Omega))}\right)\|\mathbf{u}_h^n\|\|\nabla\phi_h^n\|\|\Theta_{\mu}^{n+1}\|\\
			\leq &~\frac{\tau}{8}\|\Theta_{\mu}^{n+1}\|^2+C\tau\left(|e_r^{n+1}|^2+\|\Phi_{\phi}^n\|^2+\|\Theta_{\phi}^n\|^2+\tau^2\|\phi_t\|_{L^{\infty}(0,T;H^1(\Omega))}^2\right)\|\mathbf{u}_h^n\|^2\|\nabla\phi_h^n\|^2.
		\end{aligned}
	\end{equation}
	Similar to the estimate of \eqref{eq_boundedness_Tb3_terms}, we have 
	\begin{equation}
		\label{eq_T_phi4_boundedness_estimate}
		\begin{aligned}
			\big|T_{\phi 4}\big|=&~\bigg|\left(\frac{2\tau r^{n+1}}{\sqrt{E_1^{n+1}}}F'(\phi^{n+1})-\frac{2\tau r_h^{n+1}}{\sqrt{E_{1h}^n}}F'(\phi_h^n),\delta_{\tau}\Delta_h^{-1}\Theta_{\phi}^{n+1}\right)\bigg|\\
			=&~2\tau\bigg|\left(F'(\phi^{n+1})-F'(\phi^n)+F'(\phi^n)-F'(\phi_h^n),\delta_{\tau}\Delta_h^{-1}\Theta_{\phi}^{n+1}\right)\\
			&~+\left(\frac{e_r^{n+1}}{\sqrt{E_1^{n+1}}}+\frac{r_h^{n+1}}{\sqrt{E_1^{n+1}}}-\frac{r_h^{n+1}}{\sqrt{E_{1h}^{n}}}\right)\left(F'(\phi_h^n),\delta_{\tau}\Delta_h^{-1}\Theta_{\phi}^{n+1}\right)\bigg|.
		\end{aligned}
	\end{equation}
	From \eqref{eq_boundedness_Tb2_00003_E1_boundedness_term} and \eqref{eq_nonlinear_term_f_boundedness0001}, we have
	\begin{equation}
		\begin{aligned}
			&2\tau\big|\left(F'(\phi^{n+1})-F'(\phi^n),\delta_{\tau}\Delta_h^{-1}\Theta_{\phi}^{n+1}\right)\big|
			\leq\|F'(\phi^{n+1})-F'(\phi^n)\|\|\delta_{\tau}\Theta_{\phi}^{n+1}\|_{-2}.
		\end{aligned}
	\end{equation}
	According to \eqref{eq_delta_tau_Theta_phi_negative_norm_boundedness},
	\eqref{eq_delta_tau_Theta_phi_Hneg_boundedness} and the fact that $\|\cdot\|_{-2}\leq \|\cdot\|_{-1}$, we have
	\begin{flalign}
		\|\delta_{\tau}\Theta_{\phi}^{n+1}\|_{-2}\leq &~
		\|\Delta_h e_{\mu}^{n+1}\|_{-2}+\|\delta_{\tau}\Phi_{\phi}^{n+1}\|_{-1}+\|R_{\phi}^n\|_{-1}\notag\\
		&~+\bigg\|\frac{r^{n+1}}{\sqrt{E_1^{n+1}}}\mathbf{u}^{n+1}\cdot\nabla\phi^{n+1}-\frac{r_h^{n+1}}{\sqrt{E_{1h}^n}}\mathbf{u}_h^n\cdot\nabla\phi_h^n\bigg\|_{-1}\notag\\
		\leq &~\|e_{\mu}^{n+1}\|+C\mathcal{E}_h\|\phi_t\|_{r}+\|R_{\phi}^n\|_{-1}
		\\
		&~+C\tau\left(\|\phi^{n+1}\|_{2}\|\mathbf{u}_t\|_{L^{\infty}(0,T;H^1(\Omega))}+\|A\mathbf{u}^n\|\|\phi_{tt}\|_{L^{\infty}(0,T;L^2(\Omega))}\right)\notag\\
		&~+C\left(\|\phi^n\|_2+\|A_h\mathbf{u}_h^n\|+\|\nabla\mathbf{u}_h^n\|\|\phi_h^n\|_1+\tau\|\phi_t\|_{L^{\infty}(0,T;H^1(\Omega))}\right)\left(\|\Theta_{\mathbf{u}}^n\|\right.\notag\\
		&~\left.+\|\Phi_{\mathbf{u}}\|+\|\Theta_{\phi}^n\|+\|\Phi_{\phi}^n\|+|e_r^{n+1}|\right).\notag
	\end{flalign}
	Thus, we have the error estimate of the term \eqref{eq_T_phi4_boundedness_estimate},
	\begin{equation}
		\label{eq_T_phi4_boundedness_estimate_sub001}
		\begin{aligned}
			\big|T_{\phi 4}\big|\leq &~\tau\|e_{\mu}^{n+1}\|^2+C\tau\|R_{\phi}^n\|_{-1}^2+C\mathcal{E}_h^2\|\phi_t\|_r^2\\
			&~+C\tau^3\left(\|\phi^{n+1}\|_2^2\|\mathbf{u}_t\|_{L^{\infty}(0,T;H^1(\Omega))}^2+\|A\mathbf{u}^n\|^2\|\phi_{tt}\|_{L^{\infty}(0,T;L^2(\Omega))}^2+\|\phi_t\|_{L^{\infty}(0,T;H^1(\Omega))}^2\right)\\
			&~+C\tau\left(\|\phi^n\|_2^2+\|A_h\mathbf{u}_h^n\|^2+\|\nabla\mathbf{u}_h^n\|^2\|\phi_h^n\|_1^2\right)\left(\|\Theta_{\mathbf{u}}^n\|^2+\|\Phi_{\mathbf{u}}\|^2+\|\Theta_{\phi}^n\|^2+\|\Phi_{\phi}^n\|^2+|e_r^{n+1}|^2\right).
		\end{aligned}
	\end{equation}
	Combining the estimates of $T_{\phi 1}\sim T_{\phi 4}$, then summing up $n$ from $0$ to $m$, we obtain
	$$
		\begin{aligned}
			&\|\Theta_{\phi}^{m+1}\|^2+\frac{\tau}{2}\sum_{n=0}^{m}\|\Theta_{\mu}^{n+1}\|^2\leq \|\Theta_{\phi}^0\|^2+C\tau\sum_{n=0}^{m}\left(\|\Phi_{\mu}^{n+1}\|^2+\|R_{\phi}^n\|_{-1}^2\right)+C\mathcal{E}_h^2\|\phi_t\|_r^2\\
			&+C\tau^3\sum_{n=0}^{m}\left(\|\mathbf{u}_t\|_{L^{\infty}(0,T;H^1(\Omega))}^2\|\phi^{n+1}\|_1^2+\|\nabla\mathbf{u}^n\|^2\|\phi_{tt}\|_{L^{\infty}(0,T;L^2(\Omega))}^2\right)\\
			&+C\tau\sum_{n=0}^{m}\left(\|\phi^n\|_1^2+\|\nabla\mathbf{u}_h^n\|^2\right)\left(\|\Phi_{\phi}^n\|^2+\|\Theta_{\phi}^n\|^2+\|\Phi_{\mathbf{u}}^n\|^2+\|\Theta_{\mathbf{u}}^n\|^2\right)\\
			&+C\tau\sum_{n=0}^{m}\left(|e_r^{n+1}|^2+\|\Phi_{\phi}^n\|^2+\|\Theta_{\phi}^n\|^2+\tau^2\|\phi_t\|_{L^{\infty}(0,T;H^1(\Omega))}^2\right)\|\mathbf{u}_h^n\|^2\|\nabla\phi_h^n\|^2\\
			&+C\tau^3\sum_{n=0}^{m}\left(\|\phi^{n+1}\|_2^2\|\mathbf{u}_t\|_{L^{\infty}(0,T;H^1(\Omega))}^2+\|A\mathbf{u}^n\|^2\|\phi_{tt}\|_{L^{\infty}(0,T;L^2(\Omega))}^2+\|\phi_t\|_{L^{\infty}(0,T;H^1(\Omega))}^2\right)\\
			&+C\tau\sum_{n=0}^{m}\left(\|\phi^n\|_2^2+\|A_h\mathbf{u}_h^n\|^2+\|\nabla\mathbf{u}_h^n\|^2\|\phi_h^n\|_1^2\right)\left(\|\Theta_{\mathbf{u}}^n\|^2+\|\Phi_{\mathbf{u}}\|^2+\|\Theta_{\phi}^n\|^2+\|\Phi_{\phi}^n\|^2+|e_r^{n+1}|^2\right).
		\end{aligned}
	$$
	Using the discrete Gr\"{o}nwall's Lemma \ref{lemma_discrete_Gronwall_inequation}, the inequalities \eqref{eq_boundness_Ritz_0001}-\eqref{eq_boundness_Ritz_0004}, the Lemmas \ref{lemma_Stokes_projection_bound}, \ref{lemma_boundedness_E_phi_Deltaphi}, \ref{lemma_truncation_boundedness}, \ref{lemma_Theta_phi_H1}, \ref{lemma_boundedness_er_rho}, \ref{lemma_delta_tilde_u_L2_boundedness}, the Theorem \ref{theorem_e_phi_H1_boundedness} and the triangle inequality,
	we obtain the result of  \eqref{eq_e_phi_mu_L2_error_estimate}.\\	
	Then we prove the optimal $L^2$ error estimate for velocity $\mathbf{u}$. Adding 
	\eqref{eq_error_equations_u_p_splitting} to \eqref{eq_error_equations_u}, we obtain
	\begin{equation}
		\label{eq_add_two_error_equations_u_splitting}
		\begin{aligned}
			\left(\delta_{\tau} \left(\Phi_{\mathbf{u}}^{n+1}+\Theta_{\mathbf{u}}^{n+1}\right),\mathbf{v}_h\right)+\left(\nabla e_{\tilde{\mathbf{u}}}^{n+1},\nabla\mathbf{v}_h\right)-\left(e_p^{n+1},\nabla\cdot\mathbf{v}_h\right)-\left(R_{\mathbf{u}}^{n}+R_p^n,\mathbf{v}_h\right)&\\
			+\left(\frac{\rho^{n+1}}{\sqrt{E_2^{n+1}}}\mathbf{u}^{n+1}\cdot\nabla\mathbf{u}^{n+1}-\frac{\rho_h^{n+1}}{\sqrt{E_{2h}^n}}\mathbf{u}_h^n\cdot\nabla\mathbf{u}_h^n,\mathbf{v}_h\right)&\\
			-\left(\frac{r^{n+1}}{\sqrt{E_1^{n+1}}}\mu^{n+1}\nabla\phi^{n+1}-\frac{r_h^{n+1}}{\sqrt{E_{1h}^n}}\mu_h^n\nabla\phi_h^n,\mathbf{v}_h\right)&~=0.
		\end{aligned}
	\end{equation}
	Taking $\mathbf{v}_h=2\tau\Theta_{\mathbf{u}}^{n+1}$ in \eqref{eq_add_two_error_equations_u_splitting}, and using the \eqref{eq_Stokes_projection_0001}-\eqref{eq_Stokes_projection_0002}, \eqref{eq_error_equations_incompressible_condition}, we obtain
	\begin{equation}
		\label{eq_taking_2tau_theta_u_inner_product_0001}
		\begin{aligned}
			\|\Theta_{\mathbf{u}}^{n+1}\|^2-\|\Theta_{\mathbf{u}}^n\|^2+\|\Theta_{\mathbf{u}}^{n+1}-\Theta_{\mathbf{u}}^n\|^2&+2\tau\|\nabla\Theta_{\mathbf{u}}^{n+1}\|^2=2\tau\left(R_{\mathbf{u}}^n+R_p^n,\Theta_{\mathbf{u}}^{n+1}\right)-2\tau\left(\delta_{\tau}\Phi_{\mathbf{u}}^{n+1},\Theta_{\mathbf{u}}^{n+1}\right)\\
			&
			-\left(\frac{\rho^{n+1}}{\sqrt{E_2^{n+1}}}\mathbf{u}^{n+1}\cdot\nabla\mathbf{u}^{n+1}-\frac{\rho_h^{n+1}}{\sqrt{E_{2h}^n}}\mathbf{u}_h^n\cdot\nabla\mathbf{u}_h^n,\Theta_{\mathbf{u}}^{n+1}\right)\\
			&+\left(\frac{2\tau r^{n+1}}{\sqrt{E_1^{n+1}}}\mu^{n+1}\nabla\phi^{n+1}-\frac{2\tau r_h^{n+1}}{\sqrt{E_{1h}^n}}\mu_h^n\nabla\phi_h^n,\Theta_{\mathbf{u}}^{n+1}\right).
		\end{aligned}
	\end{equation}
	Similar to the estimates of $T_{u2}$, $T_{u3}$ and $T_{u7}$, we obtain
	\begin{equation}
		\label{eq_taking_2tau_theta_u_inner_product_0001_sub001}
		\begin{aligned}
			\big|2\tau\left(R_{\mathbf{u}}^n+R_p^n,\Theta_{\mathbf{u}}^{n+1}\right)\big|\leq \frac{\tau}{3}\|\nabla\Theta_{\mathbf{u}}^{n+1}\|^2+C\tau\left(\|R_{\mathbf{u}}^n\|_{-1}^2+\|R_p^n\|_{-1}^2\right),
		\end{aligned}
	\end{equation}
	\begin{equation}
		\label{eq_taking_2tau_theta_u_inner_product_0001_sub002}
		\begin{aligned}
			\big|-2\tau\left(\delta_{\tau}\Phi_{\mathbf{u}}^{n+1},\Theta_{\mathbf{u}}^{n+1}\right)\big|\leq \frac{\tau}{2}\|\Theta_{\mathbf{u}}^{n+1}\|^2+C\tau\|\delta_{\tau}\Phi_{\mathbf{u}}^{n+1}\|^2,
		\end{aligned}
	\end{equation}
	\begin{equation}
		\label{eq_taking_2tau_theta_u_inner_product_0001_sub003}
		\begin{aligned}
			&\bigg|-\left(\frac{\rho^{n+1}}{\sqrt{E_2^{n+1}}}\mathbf{u}^{n+1}\cdot\nabla\mathbf{u}^{n+1}-\frac{\rho_h^{n+1}}{\sqrt{E_{2h}^n}}\mathbf{u}_h^n\cdot\nabla\mathbf{u}_h^n,\Theta_{\mathbf{u}}^{n+1}\right)\bigg|\\
			\leq&~\frac{\tau}{3}\|\nabla\Theta_{\mathbf{u}}^{n+1}\|^2+C\tau^3\|\mathbf{u}_t\|_{L^{\infty}(0,T;L^2(\Omega))}^2\left(\|A\mathbf{u}^{n+1}\|^2+\|A\mathbf{u}^n\|^2\right)\\
			&+C\tau\left(\|A\mathbf{u}^n\|^2+\|\nabla\mathbf{u}^n\|^2+\|\nabla\mathbf{u}_h^n\|^2\right)\left(\|\Phi_{\mathbf{u}}^{n}\|^2+\|\Theta_{\mathbf{u}}^{n}\|^2\right)\\
			&+C\tau\|\mathbf{u}_h^n\|^2\|\nabla\mathbf{u}_h^n\|^2\left(|e_{\rho}^{n+1}|^2+\|\Phi_{\mathbf{u}}^n\|^2+\|\Theta_{\mathbf{u}}^n\|^2+\tau^2\|\mathbf{u}_t\|_{L^{\infty}(0,T;L^2(\Omega))}^2\right),
		\end{aligned}
	\end{equation}
	and
	\begin{flalign}
		&\bigg|\left(\frac{2\tau r^{n+1}}{\sqrt{E_1^{n+1}}}\mu^{n+1}\nabla\phi^{n+1}-\frac{2\tau r_h^{n+1}}{\sqrt{E_{1h}^n}}\mu_h^n\nabla\phi_h^n,\Theta_{\mathbf{u}}^{n+1}\right)\bigg|\notag\\= &~2\tau\bigg|\left(\mu^{n+1}\nabla\phi^{n+1}-\mu^n\nabla\phi^n+\mu^n\nabla\phi^n-\mu_h^n\nabla\phi_h^n,\Theta_{\mathbf{u}}^{n+1}\right)\notag\\
		&~+\left(\frac{e_r^{n+1}}{\sqrt{E_1^{n+1}}}+\frac{r_h^{n+1}}{\sqrt{E_1^{n+1}}}-\frac{r_h^{n+1}}{\sqrt{E_{1h}^n}}\right)\left(\mu_h^n\nabla\phi_h^n,\Theta_{\mathbf{u}}^{n+1}\right)\bigg|\notag\\
		= &~2\tau\big|\left(\left(\mu^{n+1}-\mu^n\right)\nabla\phi^{n+1}+\mu^n\nabla\left(\phi^{n+1}-\phi^n\right)+e_{\mu}^n\nabla\phi^n+\mu_h^n\nabla e_{\phi}^n,\Theta_{\mathbf{u}}^{n+1}\right)\notag\\
			\label{eq_taking_2tau_theta_u_inner_product_0001_sub004}
		&~+\left(\frac{e_r^{n+1}}{\sqrt{E_1^{n+1}}}+\frac{r_h^{n+1}}{\sqrt{E_1^{n+1}}}-\frac{r_h^{n+1}}{\sqrt{E_{1h}^n}}\right)\left(\mu_h^n\nabla\phi_h^n,\Theta_{\mathbf{u}}^{n+1}\right)\bigg|\\
		\leq &~C\tau\left(\tau^2\|\mu_t\|_{L^{\infty}(0,T;H^1(\Omega))}\|\nabla\phi^{n+1}\|+\tau^2\|\mu^{n}\|_1\|\phi_t\|_{L^{\infty}(0,T;H^1(\Omega))}\right)\|\nabla\Theta_{\mathbf{u}}^{n+1}\|\notag\\
		&~+C\tau\left(\|e_{\mu}^n\|_{-1}\|\nabla\phi^n\|_{1}\|\nabla\Theta_{\mathbf{u}}^{n+1}\|_1+\|\mu_h^n\|_{L^4}\|\nabla e_{\phi}^n\|\|\Theta_{\mathbf{u}}^{n+1}\|_{L^\infty}\right)\notag\\
		&~+\left(\frac{C\tau|e_r^{n+1}|}{\sqrt{C_0}}+\|e_{\phi}^n\|+\tau\|\phi_t\|_{L^{\infty}(0,T;H^1(\Omega))}\right)\|\mu_h^n\|_1\|\nabla\phi_h^n\|\|\nabla\Theta_{\mathbf{u}}^{n+1}\|\notag\\
		\leq &~\frac{\tau}{3}\|\nabla\Theta_{\mathbf{u}}^{n+1}\|^2+C\tau^3\left(\|\mu_t\|_{L^{\infty}(0,T;H^1(\Omega))}^2\|\nabla\phi^{n+1}\|^2+\|\mu^{n}\|_1^2\|\phi_t\|_{L^{\infty}(0,T;H^1(\Omega))}^2\right)\notag\\
		&~+C\tau\left(\|\Phi_{\mu}^n\|_{-1}^2+\|\Theta_{\mu}^n\|_{-1}^2\right)\|\phi^n\|_2^2+C\tau\|\mu_h^n\|_1^2\left(\|\nabla \Phi_{\phi}^n\|^2+\|\nabla\Theta_{\phi}^n\|^2\right)\notag\\
		&~+C\tau\|\mu_h^n\|_1^2\|\nabla\phi_h^n\|^2\left(|e_{r}^{n+1}|^2+\|\Phi_{\phi}^n\|^2+\|\Theta_{\phi}^n\|^2+\tau^2\|\phi_t\|_{L^{\infty}(0,T;H^1(\Omega))}^2\right).\notag
	\end{flalign}
	Combining the above inequalities \eqref{eq_taking_2tau_theta_u_inner_product_0001_sub001}-\eqref{eq_taking_2tau_theta_u_inner_product_0001_sub004} with \eqref{eq_taking_2tau_theta_u_inner_product_0001}, then summing up $n$ from $0$ to $m$, we have
	\begin{flalign}
			\|\Theta_{\mathbf{u}}^{m+1}\|^2&+\tau\sum_{n=0}^{m}\|\nabla\Theta_{\mathbf{u}}^{n+1}\|^2\leq \frac{\tau}{2}\sum_{n=0}^{m}\|\Theta_{\mathbf{u}}^{n+1}\|^2+C\tau\sum_{n=0}^{m}\left(\|R_{\mathbf{u}}^n\|_{-1}^2+\|R_p^n\|_{-1}^2+\|\delta_{\tau}\Phi_{\mathbf{u}}^{n+1}\|^2\right)\notag\\
		&+C\tau^3\sum_{n=0}^{m}\|\mathbf{u}_t\|_{L^{\infty}(0,T;L^2(\Omega))}^2\left(\|A\mathbf{u}^{n+1}\|^2+\|A\mathbf{u}^n\|^2\right)\notag\\
		&+C\tau\sum_{n=0}^{m}\left(\|A\mathbf{u}^n\|^2+\|\nabla\mathbf{u}^n\|^2+\|\nabla\mathbf{u}_h^n\|^2\right)\left(\|\Phi_{\mathbf{u}}^{n}\|^2+\|\Theta_{\mathbf{u}}^{n}\|^2\right)\notag\\
		&+C\tau\sum_{n=0}^{m}\left(\|\mathbf{u}_h^n\|^2\|\nabla\mathbf{u}_h^n\|^2\right)\left(|e_{\rho}^{n+1}|^2+\|\Phi_{\mathbf{u}}^n\|^2+\|\Theta_{\mathbf{u}}^n\|^2+\tau^2\|\mathbf{u}_t\|_{L^{\infty}(0,T;L^2(\Omega))}^2\right)\notag\\
		&+C\tau^3\sum_{n=0}^{m}\left(\|\mu_t\|_{L^{\infty}(0,T;H^1(\Omega))}^2\|\nabla\phi^{n+1}\|^2+\|\mu^{n}\|_1^2\|\phi_t\|_{L^{\infty}(0,T;H^1(\Omega))}^2\right)\notag\\
		&+C\tau\sum_{n=0}^{m}\left(\|\Phi_{\mu}^n\|_{-1}^2+\|\Theta_{\mu}^n\|_{-1}^2\right)\|\phi^n\|_2^2+C\tau\|\mu_h^n\|_1^2\left(\|\nabla \Phi_{\phi}^n\|^2+\|\nabla\Theta_{\phi}^n\|^2\right)\notag\\
		&+C\tau\sum_{n=0}^{m}\|\mu_h^n\|_1^2\|\nabla\phi_h^n\|^2\left(|e_{r}^{n+1}|^2+\|\Phi_{\phi}^n\|^2+\|\Theta_{\phi}^n\|^2+\tau^2\|\phi_t\|_{L^{\infty}(0,T;H^1(\Omega))}^2\right).\notag
	\end{flalign}
	Using the inequalities \eqref{eq_boundness_Ritz_0001}-\eqref{eq_boundness_Ritz_0004}, Lemmas \ref{lemma_Stokes_projection_bound}, \ref{lemma_Laplace_opreator}, \ref{lemma_boundedness_E_phi_Deltaphi}, \ref{lemma_truncation_boundedness},  \ref{lemma_Theta_phi_H1}, \ref{lemma_Theta_u_p_L2}, \ref{lemma_boundedness_er_rho}, the discrete Gr\"{o}nwall's Lemma \ref{lemma_discrete_Gronwall_inequation} and the triangle inequality, we obtain the desired result.
\end{proof}

We next prove the optimal $L^2$ error estimate for pressure, and encounter a term $\|e_{\mathbf{u}}^{n+1}-e_{\mathbf{u}}^n\|_{-1}$. To address this issue, we present the following lemma.
\begin{Lemma}\label{lemma_e_n1n_H_neg_boundedness}
	Under the assumption of regularity \eqref{eq_varibles_satisfied_regularities}, for all $m\geq 0$, we have
	$$
		\sum_{n=0}^{m}\|e_{\mathbf{u}}^{n+1}-e_{\mathbf{u}}^n\|_{-1}^2\leq C\left(\tau^2+h^{2(r+1)}\right),
	$$
	where $C$ is a positive constant independent of $\tau$ and $h$.
\end{Lemma}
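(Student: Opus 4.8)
The plan is to split the increment according to the Stokes projection, $e_{\mathbf{u}}^{n+1}-e_{\mathbf{u}}^n=(\Phi_{\mathbf{u}}^{n+1}-\Phi_{\mathbf{u}}^n)+(\Theta_{\mathbf{u}}^{n+1}-\Theta_{\mathbf{u}}^n)$, and to treat the two pieces separately. The projection piece is harmless: since $\|\cdot\|_{-1}\le C\|\cdot\|$ and $\Phi_{\mathbf{u}}^{n+1}-\Phi_{\mathbf{u}}^n=\tau\,\delta_{\tau}\Phi_{\mathbf{u}}^{n+1}$, Lemma \ref{lemma_Stokes_projection_bound} gives $\sum_{n=0}^m\|\Phi_{\mathbf{u}}^{n+1}-\Phi_{\mathbf{u}}^n\|_{-1}^2\le\tau\big(\tau\sum_{n=0}^m\|\delta_{\tau}\Phi_{\mathbf{u}}^{n+1}\|^2\big)\le C\tau h^{2(r+1)}$, which is absorbed into the claimed bound. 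Thus the whole content reduces to estimating $\mathbf{w}_h^{n+1}:=\Theta_{\mathbf{u}}^{n+1}-\Theta_{\mathbf{u}}^n$.

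First I would observe that $\mathbf{w}_h^{n+1}\in\mathbf{X}_h^{r+1}$ is discretely divergence free: by \eqref{eq_Stokes_projection_0002} and \eqref{eq_error_equations_incompressible_condition} one has $(\nabla\cdot\Theta_{\mathbf{u}}^{k},q_h)=0$ for every $q_h\in\mathring{S}_h^r$. Hence I may introduce $\boldsymbol{\zeta}_h:=A_h^{-1}\mathbf{w}_h^{n+1}\in\mathbf{X}_h^{r+1}$, again discretely divergence free, which by the definition of the discrete Stokes operator and \eqref{eq_discrete_laplace_operator_norms_differences_0001}--\eqref{eq_discrete_laplace_operator_norms_differences_0002} satisfies $(\mathbf{w}_h^{n+1},\boldsymbol{\zeta}_h)=\|\mathbf{w}_h^{n+1}\|_{-1}^2$ and $\|\nabla\boldsymbol{\zeta}_h\|=\|\mathbf{w}_h^{n+1}\|_{-1}$. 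Writing $(\mathbf{w}_h^{n+1},\boldsymbol{\zeta}_h)=\tau(\delta_{\tau}e_{\mathbf{u}}^{n+1},\boldsymbol{\zeta}_h)-(\Phi_{\mathbf{u}}^{n+1}-\Phi_{\mathbf{u}}^n,\boldsymbol{\zeta}_h)$ and inserting the admissible test function $\mathbf{v}_h=\boldsymbol{\zeta}_h$ into the combined velocity error equation \eqref{eq_add_two_error_equations_u_splitting}, I express $\|\mathbf{w}_h^{n+1}\|_{-1}^2$ as $\tau$ times the sum of a diffusion term $-(\nabla e_{\tilde{\mathbf{u}}}^{n+1},\nabla\boldsymbol{\zeta}_h)$, a pressure term $(e_p^{n+1},\nabla\cdot\boldsymbol{\zeta}_h)$, the consistency terms $(R_{\mathbf{u}}^n+R_p^n,\boldsymbol{\zeta}_h)$, and the convective and phase-coupling differences, minus the projection term $(\Phi_{\mathbf{u}}^{n+1}-\Phi_{\mathbf{u}}^n,\boldsymbol{\zeta}_h)$.

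The crucial point, and the main obstacle, is the pressure term, since no bound on $e_p^{n+1}$ is available at this stage (this lemma is itself a prerequisite for the pressure estimate). Here the discrete divergence freeness of $\boldsymbol{\zeta}_h$ is decisive: writing $e_p^{n+1}=\Phi_p^{n+1}+\Theta_p^{n+1}$ and using $\Theta_p^{n+1}\in\mathring{S}_h^r$, the discrete part drops out, $(\Theta_p^{n+1},\nabla\cdot\boldsymbol{\zeta}_h)=0$, so that only the projection part survives, $(e_p^{n+1},\nabla\cdot\boldsymbol{\zeta}_h)=(\Phi_p^{n+1},\nabla\cdot\boldsymbol{\zeta}_h)\le C\|\Phi_p^{n+1}\|\,\|\nabla\boldsymbol{\zeta}_h\|$, which is of optimal order $h^{r+1}$ by \eqref{eq_Stokes_operators_boundness_p}. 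This is precisely the mechanism that upgrades the final bound from $h^{2r}$ to $h^{2(r+1)}$. The remaining terms are estimated by Cauchy--Schwarz as (error quantities) times $\|\nabla\boldsymbol{\zeta}_h\|$: the diffusion term by $\|\nabla e_{\tilde{\mathbf{u}}}^{n+1}\|$, the consistency terms by $\|R_{\mathbf{u}}^n\|_{-1}+\|R_p^n\|_{-1}$, and the convective and phase-coupling differences exactly as in the estimates of $T_{u7}$ and $T_{u8}$ (and their analogues in Theorem \ref{theorem_e_phi_u_L2_error_estimates}), carrying the factors $\|\Phi_{\mathbf{u}}^n\|$, $\|\Theta_{\mathbf{u}}^n\|$, $\|\Phi_{\phi}^n\|_1$, $\|\Theta_{\phi}^n\|_1$, $|e_r^{n+1}|$, $|e_{\rho}^{n+1}|$, all already controlled by Theorems \ref{theorem_e_phi_H1_boundedness}--\ref{theorem_e_phi_u_L2_error_estimates} and Lemma \ref{lemma_boundedness_er_rho}.

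Dividing by $\|\nabla\boldsymbol{\zeta}_h\|=\|\mathbf{w}_h^{n+1}\|_{-1}$, then squaring and summing over $n$, every resulting contribution has the schematic form $\tau^2\sum_{n}(\cdots)=\tau\big(\tau\sum_n(\cdots)\big)$; the inner sums are bounded by $C(\tau^2+h^{2r})$ (for the diffusion term, via $\tau\sum\|\nabla e_{\tilde{\mathbf{u}}}^{n+1}\|^2\le C(\tau^2+h^{2r})$ from Theorem \ref{theorem_e_phi_H1_boundedness}) or by higher order, so the extra factor $\tau$ yields $C(\tau^3+\tau h^{2r})$. Finally I would invoke the elementary inequality $\tau h^{2r}\le\tfrac12\tau^2+\tfrac12 h^{4r}\le\tfrac12\tau^2+\tfrac12 h^{2(r+1)}$, valid for $r\ge1$ and $h\le1$, to conclude that $\sum_{n=0}^m\|\mathbf{w}_h^{n+1}\|_{-1}^2\le C(\tau^2+h^{2(r+1)})$. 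Combining this with the projection piece and the triangle inequality yields the lemma; note that no mesh restriction $\tau=O(h)$ is required, the gain arising entirely from the factor $\tau$ produced by writing the increment as $\tau\,\delta_{\tau}e_{\mathbf{u}}^{n+1}$.
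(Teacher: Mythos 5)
Your proposal is correct and rests on the same central mechanism as the paper's proof: test the combined velocity error equation \eqref{eq_add_two_error_equations_u_splitting} against $A_h^{-1}$ applied to the increment, so that every right-hand-side contribution carries an extra factor of $\tau$, then square and sum. Your execution differs in three places, all of which still close. First, you split off the Stokes-projection part before inverting $A_h$, so that $A_h^{-1}$ only ever acts on the discrete, discretely divergence-free function $\Theta_{\mathbf{u}}^{n+1}-\Theta_{\mathbf{u}}^n$; the paper applies $A_h^{-1}$ directly to $e_{\mathbf{u}}^{n+1}-e_{\mathbf{u}}^n$, a mild abuse of notation that your version avoids. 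Second, for the diffusion term the paper uses the pressure-correction relation \eqref{eq_error_equations_u_p_splitting} to convert $-\bigl(\Delta_h e_{\tilde{\mathbf{u}}}^{n+1},\tau A_h^{-1}(e_{\mathbf{u}}^{n+1}-e_{\mathbf{u}}^n)\bigr)$ into the telescoping nonnegative quantity $\tfrac{\tau}{2}\bigl(\|e_{\mathbf{u}}^{n+1}\|^2-\|e_{\mathbf{u}}^{n}\|^2+\|e_{\mathbf{u}}^{n+1}-e_{\mathbf{u}}^{n}\|^2\bigr)$ kept on the left, whereas you bound it by Cauchy--Schwarz against $\|\nabla\boldsymbol{\zeta}_h\|$ and invoke $\tau\sum_n\|\nabla e_{\tilde{\mathbf{u}}}^{n+1}\|^2\le C(\tau^2+h^{2r})$ together with $\tau h^{2r}\le\tfrac12\tau^2+\tfrac12 h^{4r}\le\tfrac12\tau^2+\tfrac12 h^{2(r+1)}$ for $r\ge1$, $h\le1$; this is slightly more lossy but simpler, and the same Young-type step is in any case needed for the coupling terms carrying $\|e_{\phi}^{n}\|_1$, which is only $O(\tau+h^{r})$. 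Third, the pressure term: the paper's identity \eqref{eq_e_u_H_nerg_norm_boundedness} silently omits $-(e_p^{n+1},\nabla\cdot\mathbf{v}_h)$, while you dispose of the $\Theta_p^{n+1}$ part by discrete divergence-freeness of $\boldsymbol{\zeta}_h$ and of the $\Phi_p^{n+1}$ part by \eqref{eq_Stokes_operators_boundness_p}; your treatment is the more careful one and correctly identifies the mechanism that keeps the bound at order $h^{2(r+1)}$. One caveat: your closing remark that no restriction $\tau=O(h)$ is needed applies only to the mechanism internal to this lemma; the prerequisite bounds you import (Theorem \ref{theorem_e_phi_u_L2_error_estimates} via Lemma \ref{lemma_delta_tilde_u_L2_boundedness}) were derived under that assumption, so it is still inherited.
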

\begin{proof}
	Letting $\mathbf{v}_h=\tau A_h^{-1}\left(e_{\mathbf{u}}^{n+1}-e_{\mathbf{u}}^n\right)$ in
	\eqref{eq_add_two_error_equations_u_splitting}, using the \eqref{eq_discrete_laplace_operator_norms_differences_0001} and \eqref{eq_error_equations_incompressible_condition}, we have
	\begin{equation}
		\label{eq_e_u_H_nerg_norm_boundedness}
		\begin{aligned}
			\|e_{\mathbf{u}}^{n+1}-e_{\mathbf{u}}^n\|_{-1}^2&-\left(\Delta_he_{\tilde{\mathbf{u}}}^{n+1},\tau A_h^{-1}\left(e_{\mathbf{u}}^{n+1}-e_{\mathbf{u}}^n\right)\right)=\tau\left(R_{\mathbf{u}}^n+R_p^n, A_h^{-1}\left(e_{\mathbf{u}}^{n+1}-e_{\mathbf{u}}^n\right)\right)\\
			&	-\left(\frac{\tau\rho^{n+1}}{\sqrt{E_2^{n+1}}}\mathbf{u}^{n+1}\cdot\nabla\mathbf{u}^{n+1}-\frac{\tau\rho_h^{n+1}}{\sqrt{E_{2h}^n}}\mathbf{u}_h^n\cdot\nabla\mathbf{u}_h^n,A_h^{-1}\left(e_{\mathbf{u}}^{n+1}-e_{\mathbf{u}}^n\right)\right)\\
			&+\left(\frac{\tau r^{n+1}}{\sqrt{E_1^{n+1}}}\mu^{n+1}\nabla\phi^{n+1}-\frac{\tau r_h^{n+1}}{\sqrt{E_{1h}^n}}\mu_h^n\nabla\phi_h^n,A_h^{-1}\left(e_{\mathbf{u}}^{n+1}-e_{\mathbf{u}}^n\right)\right).
		\end{aligned}
	\end{equation} 
	Using the \eqref{eq_error_equations_u_p_splitting}, we have the following equation
	\begin{equation}
		\label{eq_e_u_H_nerg_norm_boundedness_sub0001}
		\begin{aligned}
			-\left(\Delta_he_{\tilde{\mathbf{u}}}^{n+1},\tau A_h^{-1}\left(e_{\mathbf{u}}^{n+1}-e_{\mathbf{u}}^n\right)\right)=&~\tau\left(e_{\tilde{\mathbf{u}}}^{n+1},-\Delta_hA_h^{-1}\left(e_{\mathbf{u}}^{n+1}-e_{\mathbf{u}}^n\right)\right)\\
			=&~\tau\left(e_{\mathbf{u}}^{n+1},e_{\mathbf{u}}^{n+1}-e_{\mathbf{u}}^n\right)\\
			=&~\frac{\tau}{2}\left(\|e_{\mathbf{u}}^{n+1}\|^2-\|e_{\mathbf{u}}^{n}\|^2+\|e_{\mathbf{u}}^{n+1}-e_{\mathbf{u}}^n\|^2\right).
		\end{aligned}
	\end{equation}
	We next estimate the first term on  the right-hand side of \eqref{eq_e_u_H_nerg_norm_boundedness} as follows,
	\begin{equation}
		\label{eq_e_u_H_nerg_norm_boundedness_sub0002}
		\begin{aligned}
			\big|\tau\left(R_{\mathbf{u}}^n+R_p^n, A_h^{-1}\left(e_{\mathbf{u}}^{n+1}-e_{\mathbf{u}}^n\right)\right)\big|
			\leq \frac{1}{10}\|e_{\mathbf{u}}^{n+1}-e_{\mathbf{u}}^n\|_{-1}^2+C\tau^2\left(\|R_{\mathbf{u}}^n\|_{-1}^2+\|R_p^n\|_{-1}^2\right).
		\end{aligned}
	\end{equation}
	Similar to  the estimate of $T_{u7}$, the second term on  the right-hand side of \eqref{eq_e_u_H_nerg_norm_boundedness} 
	is estimated as follows,
	\begin{equation}
		\label{eq_e_u_H_nerg_norm_boundedness_sub0003}
		\begin{aligned}
			&\bigg|-\left(\frac{\tau\rho^{n+1}}{\sqrt{E_2^{n+1}}}\mathbf{u}^{n+1}\cdot\nabla\mathbf{u}^{n+1}-\frac{\tau\rho_h^{n+1}}{\sqrt{E_{2h}^n}}\mathbf{u}_h^n\cdot\nabla\mathbf{u}_h^n,A_h^{-1}\left(e_{\mathbf{u}}^{n+1}-e_{\mathbf{u}}^n\right)\right)\bigg|\\
			=&~\tau\bigg|\left(\mathbf{u}^{n+1}\cdot\nabla\mathbf{u}^{n+1}-\mathbf{u}^n\cdot\nabla\mathbf{u}^n+\mathbf{u}^n\cdot\nabla\mathbf{u}^n
			-\mathbf{u}_h^n\cdot\nabla\mathbf{u}_h^n,A_h^{-1}\left(e_{\mathbf{u}}^{n+1}-e_{\mathbf{u}}^n\right)\right)\\
			&~+\left(\frac{e_{\rho}^{n+1}}{\sqrt{E_{2}^{n+1}}}+\frac{\rho_h^{n+1}}{\sqrt{E_2^{n+1}}}-\frac{\rho_h^{n+1}}{\sqrt{E_{2h}^n}}\right)\left(\mathbf{u}_h^n\cdot\nabla\mathbf{u}_h^n,A_h^{-1}\left(e_{\mathbf{u}}^{n+1}-e_{\mathbf{u}}^n\right)\right)\bigg|.
		\end{aligned}
	\end{equation}
	Thus, we have 
	\begin{equation}
		\label{eq_e_u_H_nerg_norm_boundedness_sub0004}
		\begin{aligned}
			&\tau\big|\left(\mathbf{u}^{n+1}\cdot\nabla\mathbf{u}^{n+1}-\mathbf{u}^n\cdot\nabla\mathbf{u}^n+\mathbf{u}^n\cdot\nabla\mathbf{u}^n
			-\mathbf{u}_h^n\cdot\nabla\mathbf{u}_h^n,A_h^{-1}\left(e_{\mathbf{u}}^{n+1}-e_{\mathbf{u}}^n\right)\right)\big|\\
			\leq &~\frac{1}{10}\|e_{\mathbf{u}}^{n+1}-e_{\mathbf{u}}^n\|_{-1}^2+C\tau^4\|\mathbf{u}_t\|_{L^{\infty}(0,T;H^1(\Omega))}^2\left(\|\nabla\mathbf{u}^{n+1}\|^2+\|\nabla\mathbf{u}^n\|^2\right)\\
			&~+C\tau^2\left(\|A\mathbf{u}^n\|^2+\|\nabla\mathbf{u}^n\|^2+\|\nabla\mathbf{u}_h^n\|^2\right)\|e_{\mathbf{u}}^n\|^2,
		\end{aligned}
	\end{equation}
	and
	\begin{equation}
		\label{eq_e_u_H_nerg_norm_boundedness_sub0005}
		\begin{aligned}
			&\bigg|\left(\frac{e_{\rho}^{n+1}}{\sqrt{E_{2}^{n+1}}}+\frac{\rho_h^{n+1}}{\sqrt{E_2^{n+1}}}-\frac{\rho_h^{n+1}}{\sqrt{E_{2h}^n}}\right)\left(\mathbf{u}_h^n\cdot\nabla\mathbf{u}_h^n,A_h^{-1}\left(e_{\mathbf{u}}^{n+1}-e_{\mathbf{u}}^n\right)\right)\bigg|\\
			\leq&~\frac{1}{10}\|e_{\mathbf{u}}^{n+1}-e_{\mathbf{u}}^n\|_{-1}^2+C\tau^2\|\mathbf{u}_h^n\|^2\|\nabla\mathbf{u}_h^n\|^2\left(|e_{\rho}^{n+1}|^2+\|\Phi_{\mathbf{u}}^n\|^2+\|\Theta_{\mathbf{u}}^n\|^2+\tau\|\mathbf{u}_t\|_{L^{\infty}(0,T;H^1(\Omega))}^2\right).
		\end{aligned}
	\end{equation}
	Similar to the estimate of $T_{u8}$, the third term on  the right-hand side of \eqref{eq_e_u_H_nerg_norm_boundedness} 
	is estimated as follows,
	\begin{equation}
		\label{eq_e_u_H_nerg_norm_boundedness_sub0006}
		\begin{aligned}
			&\bigg|\left(\frac{\tau r^{n+1}}{\sqrt{E_1^{n+1}}}\mu^{n+1}\nabla\phi^{n+1}-\frac{\tau r_h^{n+1}}{\sqrt{E_{1h}^n}}\mu_h^n\nabla\phi_h^n,A_h^{-1}\left(e_{\mathbf{u}}^{n+1}-e_{\mathbf{u}}^n\right)\right)\bigg|\\= &~\tau\bigg|\left(\mu^{n+1}\nabla\phi^{n+1}-\mu^n\nabla\phi^n+\mu^n\nabla\phi^n-\mu_h^n\nabla\phi_h^n,A_h^{-1}\left(e_{\mathbf{u}}^{n+1}-e_{\mathbf{u}}^n\right)\right)\\
			&~+\left(\frac{e_r^{n+1}}{\sqrt{E_1^{n+1}}}+\frac{r_h^{n+1}}{\sqrt{E_1^{n+1}}}-\frac{r_h^{n+1}}{\sqrt{E_{1h}^n}}\right)\left(\mu_h^n\nabla\phi_h^n,A_h^{-1}\left(e_{\mathbf{u}}^{n+1}-e_{\mathbf{u}}^n\right)\right)\bigg|\\
			= &~\tau\big|\left(\left(\mu^{n+1}-\mu^n\right)\nabla\phi^{n+1}+\mu^n\nabla\left(\phi^{n+1}-\phi^n\right)+e_{\mu}^n\nabla\phi^n+\mu_h^n\nabla e_{\phi}^n,A_h^{-1}\left(e_{\mathbf{u}}^{n+1}-e_{\mathbf{u}}^n\right)\right)\\
			&~+\left(\frac{e_r^{n+1}}{\sqrt{E_1^{n+1}}}+\frac{r_h^{n+1}}{\sqrt{E_1^{n+1}}}-\frac{r_h^{n+1}}{\sqrt{E_{1h}^n}}\right)\left(\mu_h^n\nabla\phi_h^n,A_h^{-1}\left(e_{\mathbf{u}}^{n+1}-e_{\mathbf{u}}^n\right)\right)\bigg|.
		\end{aligned}
	\end{equation}
	According to the inequalities \eqref{eq_boundness_basic_inequalities_0001}, \eqref{eq_boundness_basic_inequalities_0002}, we obtain
	\begin{equation}
		\label{eq_e_u_H_nerg_norm_boundedness_sub0007}
		\begin{aligned}
			&\tau\big|\left(\left(\mu^{n+1}-\mu^n\right)\nabla\phi^{n+1}+\mu^n\nabla\left(\phi^{n+1}-\phi^n\right)+e_{\mu}^n\nabla\phi^n+\mu_h^n\nabla e_{\phi}^n,A_h^{-1}\left(e_{\mathbf{u}}^{n+1}-e_{\mathbf{u}}^n\right)\right)\big|\\
			\leq &~\frac{1}{10}\|e_{\mathbf{u}}^{n+1}-e_{\mathbf{u}}^n\|_{-1}^2+C\tau^4\left(\|\phi^{n+1}\|_1^2\|\mu_t\|_{L^{\infty}(0,T;H^1(\Omega))}^2+\|\mu^{n}\|_1^2\|\phi_{tt}\|_{L^{\infty}(0,T;L^2(\Omega))}^2\right)\\
			&+C\tau^2\left(\|\phi^n\|^2\|e_{\mu}^n\|_1^2+\|\mu_h^n\|_1^2\|e_{\phi}^n\|_1^2\right),
		\end{aligned}
	\end{equation}
	and
	\begin{equation}
		\label{eq_e_u_H_nerg_norm_boundedness_sub0008}
		\begin{aligned}
			&\bigg|\left(\frac{e_r^{n+1}}{\sqrt{E_1^{n+1}}}+\frac{r_h^{n+1}}{\sqrt{E_1^{n+1}}}-\frac{r_h^{n+1}}{\sqrt{E_{1h}^n}}\right)\left(\mu_h^n\nabla\phi_h^n,A_h^{-1}\left(e_{\mathbf{u}}^{n+1}-e_{\mathbf{u}}^n\right)\right)\bigg|\\
			\leq &~C\tau\left(\frac{|e_r^{n+1}|}{\sqrt{C_0}}+\|e_{\phi}^n\|+\tau\|\phi_t\|_{L^{\infty}(0,T;H^1(\Omega))}\right)\|\mu_h^n\|_1\|\phi_h^n\|_1\|e_{\mathbf{u}}^{n+1}-e_{\mathbf{u}}^n\|_{-1}\\
			\leq&~\frac{1}{10}\|e_{\mathbf{u}}^{n+1}-e_{\mathbf{u}}^n\|_{-1}^2+C\tau^2\left(|e_r^{n+1}|^2+\|e_{\phi}^n\|^2+\tau^2\|\phi_t\|_{L^{\infty}(0,T;H^1(\Omega))}^2\right)\|\mu_h^n\|_1^2\|\phi_h^n\|_1^2.
		\end{aligned}
	\end{equation}
	Combining the above inequalities \eqref{eq_e_u_H_nerg_norm_boundedness_sub0001}-\eqref{eq_e_u_H_nerg_norm_boundedness_sub0008} with \eqref{eq_e_u_H_nerg_norm_boundedness}, summing up $n$ from $0$ to $m$, we obtain
	\begin{flalign}
		\frac{\tau}{2}\|e_{\mathbf{u}}^{m+1}\|^2&+\frac{1}{2}\sum_{n=0}^{m}\|e_{\mathbf{u}}^{n+1}-e_{\mathbf{u}}^n\|_{-1}^2\leq 
		\frac{\tau}{2}\|e_{\mathbf{u}}^0\|^2+C\tau^2\sum_{n=0}^{m}\left(\|R_{\mathbf{u}}^n\|_{-1}^2+\|R_p^n\|_{-1}^2\right)\notag\\
		&+C\tau^4\sum_{n=0}^{m}\|\mathbf{u}_t\|_{L^{\infty}(0,T;H^1(\Omega))}^2\left(\|\nabla\mathbf{u}^{n+1}\|^2+\|\nabla\mathbf{u}^n\|^2\right)\notag\\
		&+C\tau^2\sum_{n=0}^{m}\left(\|A\mathbf{u}^n\|^2+\|\nabla\mathbf{u}^n\|^2+\|\nabla\mathbf{u}_h^n\|^2\right)\|e_{\mathbf{u}}^n\|^2\notag\\
		&+C\tau^2\sum_{n=0}^{m}\|\mathbf{u}_h^n\|^2\|\nabla\mathbf{u}_h^n\|^2\left(|e_{\rho}^{n+1}|^2+\|\Phi_{\mathbf{u}}^n\|^2+\|\Theta_{\mathbf{u}}^n\|^2+\tau\|\mathbf{u}_t\|_{L^{\infty}(0,T;H^1(\Omega))}^2\right)\notag\\
		&+C\tau^4\sum_{n=0}^{m}\left(\|\phi^{n+1}\|_1^2\|\mu_t\|_{L^{\infty}(0,T;H^1(\Omega))}^2+\|\mu^{n}\|_1^2\|\phi_{tt}\|_{L^{\infty}(0,T;L^2(\Omega))}^2\right)\notag\\
		&+C\tau^2\sum_{n=0}^{m}\left(\|\phi^n\|^2\|e_{\mu}^n\|_1^2+\|\mu_h^n\|_1^2\|e_{\phi}^n\|_1^2\right)\notag\\
		&+C\tau^2\sum_{n=0}^{m}\left(|e_r^{n+1}|^2+\|e_{\phi}^n\|^2+\tau^2\|\phi_t\|_{L^{\infty}(0,T;H^1(\Omega))}^2\right)\|\mu_h^n\|_1^2\|\phi_h^n\|_1^2.\notag
	\end{flalign}
	Using the Lemmas \ref{lemma_boundedness_E_phi_Deltaphi}, \ref{lemma_truncation_boundedness}, \ref{lemma_boundedness_er_rho},
	and the Theorem \ref{theorem_e_phi_u_L2_error_estimates}, we can derive the desired results.
\end{proof}
Then, we give the following theorem for the optimal $L^2$ error estimate of the pressure $p$.
\begin{Theorem}\label{theorem_p_error_estimate0001}
	Suppose that the system \eqref{eqCHNS01} has a unique solution $\left(\phi,\mu,\mathbf{u},p\right)$ satisfying \eqref{eq_varibles_satisfied_regularities}. 
	Then the fully discrete scheme \eqref{eq_fully_discrete_scheme_phi}-\eqref{eq_fully_discrete_scheme_q} have a unique solution $\left(\phi_h^{n+1},\mu_h^{n+1},\mathbf{u}_h^{n+1},p_h^{n+1}\right) \in \mathcal{X}_h^r$ and the initial error is $0$ such that
	$$
		\tau\sum_{n=0}^{m}\|e_p^{n+1}\|^2\leq C\left(\tau^2+h^{2(r+1)}\right),
	$$
	for some $m\geq 0$  and $C$ is a positive constant independent of $\tau$ and $h$.
\end{Theorem}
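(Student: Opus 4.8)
The plan is to estimate the pressure error through the inf-sup (LBB) condition rather than through an energy identity, since in a pressure-correction scheme the pressure carries no time derivative of its own. I split $e_p^{n+1}=\Phi_p^{n+1}+\Theta_p^{n+1}$; the projection part $\|\Phi_p^{n+1}\|$ is already of optimal order $O(h^{r+1})$ by the Stokes projection bound \eqref{eq_Stokes_operators_boundness_p}, so after a triangle inequality everything reduces to controlling the finite-element part $\Theta_p^{n+1}$. The inf-sup condition gives $\|\Theta_p^{n+1}\|\leq C\sup_{\mathbf{v}_h\neq 0}(\Theta_p^{n+1},\nabla\cdot\mathbf{v}_h)/\|\nabla\mathbf{v}_h\|$, so the task is to bound the numerator for an arbitrary $\mathbf{v}_h\in\mathbf{X}_h^{r+1}$.

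First I would use the combined momentum error identity \eqref{eq_add_two_error_equations_u_splitting} to solve for $(e_p^{n+1},\nabla\cdot\mathbf{v}_h)$. Writing $e_p^{n+1}=\Phi_p^{n+1}+\Theta_p^{n+1}$ and, crucially, invoking the Stokes projection identity \eqref{eq_Stokes_projection_0001} to rewrite the viscous contribution $(\nabla e_{\tilde{\mathbf{u}}}^{n+1},\nabla\mathbf{v}_h)$ so that the two $(\Phi_p^{n+1},\nabla\cdot\mathbf{v}_h)$ terms cancel, I obtain an identity of the schematic form $(\Theta_p^{n+1},\nabla\cdot\mathbf{v}_h)=(\delta_\tau e_{\mathbf{u}}^{n+1},\mathbf{v}_h)+(\nabla\Theta_{\tilde{\mathbf{u}}}^{n+1},\nabla\mathbf{v}_h)-(R_{\mathbf{u}}^n+R_p^n,\mathbf{v}_h)+(\text{convection})+(\text{phase coupling})$. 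This reduction is where the optimal spatial order is secured: the viscous term now carries $\Theta_{\tilde{\mathbf{u}}}^{n+1}$ rather than $e_{\tilde{\mathbf{u}}}^{n+1}$, and $\|\nabla\Theta_{\tilde{\mathbf{u}}}^{n+1}\|$ is controlled at optimal order by Lemma \ref{lemma_Theta_u_p_L2}.

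Next I would divide by $\|\nabla\mathbf{v}_h\|$ and estimate each remaining piece in the negative norm $\|\cdot\|_{-1}$. The truncation terms give $\|R_{\mathbf{u}}^n\|_{-1}+\|R_p^n\|_{-1}$, handled by Lemma \ref{lemma_truncation_boundedness}; the viscous term gives $\|\nabla\Theta_{\tilde{\mathbf{u}}}^{n+1}\|$; and the convection and phase-coupling differences are decomposed and bounded exactly as in the estimates of $T_{u7}$ and $T_{u8}$ inside Lemma \ref{lemma_Theta_u_p_L2}, now in the negative norm, so that the already-established $L^2$ errors of $\mathbf{u}$, $\phi$, $\mu$ together with the scalar errors $e_r,e_\rho$ from Theorem \ref{theorem_e_phi_u_L2_error_estimates} and Lemma \ref{lemma_boundedness_er_rho} enter at optimal order. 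Squaring, multiplying by $\tau$, and summing over $n$ then leaves the single truly delicate term.

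The main obstacle is the discrete time-derivative term $(\delta_\tau e_{\mathbf{u}}^{n+1},\mathbf{v}_h)$. Bounding it by $\|\delta_\tau e_{\mathbf{u}}^{n+1}\|_{-1}\|\nabla\mathbf{v}_h\|=\tau^{-1}\|e_{\mathbf{u}}^{n+1}-e_{\mathbf{u}}^n\|_{-1}\|\nabla\mathbf{v}_h\|$ introduces a factor $\tau^{-1}$, so after the weight $\tau$ and summation it produces $\tau^{-1}\sum_{n=0}^m\|e_{\mathbf{u}}^{n+1}-e_{\mathbf{u}}^n\|_{-1}^2$. This is precisely the quantity Lemma \ref{lemma_e_n1n_H_neg_boundedness} was built to control: its proof estimates $\sum_{n=0}^m\|e_{\mathbf{u}}^{n+1}-e_{\mathbf{u}}^n\|_{-1}^2$ against a right-hand side made entirely of $C\tau^2\|\cdot\|^2$ terms, which carries an extra power of $\tau$ and thereby absorbs the $\tau^{-1}$ while keeping the contribution at $O(\tau^2+h^{2(r+1)})$. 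Collecting all pieces, adding back $\|\Phi_p^{n+1}\|$ by the triangle inequality, and invoking Lemmas \ref{lemma_Stokes_projection_bound}, \ref{lemma_boundedness_E_phi_Deltaphi}, \ref{lemma_truncation_boundedness}, \ref{lemma_Theta_phi_H1}, \ref{lemma_Theta_u_p_L2}, \ref{lemma_boundedness_er_rho} together with Theorem \ref{theorem_e_phi_u_L2_error_estimates} then yields $\tau\sum_{n=0}^m\|e_p^{n+1}\|^2\leq C(\tau^2+h^{2(r+1)})$; the asserted uniqueness of the discrete solution follows separately from the linearity of each time step, the inf-sup stability of the Stokes solve, and the prescribed root selection for the scalar $\rho_h^{n+1}$.
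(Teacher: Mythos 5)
Your proposal follows essentially the same route as the paper's proof: the inf-sup condition applied to the combined momentum error identity \eqref{eq_add_two_error_equations_u_splitting}, reuse of the $T_{u7}$ and $T_{u8}$ estimates for the convection and phase-coupling differences, Lemma \ref{lemma_e_n1n_H_neg_boundedness} to absorb the $\tau^{-1}\|e_{\mathbf{u}}^{n+1}-e_{\mathbf{u}}^n\|_{-1}$ term, and a final square--multiply-by-$\tau$--sum step. Your only deviation is the explicit split $e_p^{n+1}=\Phi_p^{n+1}+\Theta_p^{n+1}$ before invoking the inf-sup condition (which formally applies only to discrete pressures), a small refinement of the paper's direct application to $e_p^{n+1}$ rather than a different argument.
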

\begin{proof}
	According to the \eqref{eq_add_two_error_equations_u_splitting}, using the classical inf-sup condition and the estimations of $T_{u7}$ and $T_{u8}$, we obtain
	\begin{flalign}
		C\|e_p^{n+1}\|\leq&~ \frac{\left(e_p^{n+1},\nabla\cdot\mathbf{v}_h\right)}{\|\nabla\mathbf{v}_h\|}\notag\\
		\leq &~\bigg[\left(\frac{e_{\mathbf{u}}^{n+1}-e_{\mathbf{u}}^n}{\tau},\mathbf{v}_h\right)-\left(R_{\mathbf{u}}^n+R_p^n,\mathbf{v}_h\right)\notag\\
		&+\left(\frac{\rho^{n+1}}{\sqrt{E_2^{n+1}}}\mathbf{u}^{n+1}\cdot\nabla\mathbf{u}^{n+1}-\frac{\rho_h^{n+1}}{\sqrt{E_{2h}^n}}\mathbf{u}_h^n\cdot\nabla\mathbf{u}_h^n,\mathbf{v}_h\right)\notag\\
		\label{eq_e_p_error_estimates_00001}
		&-\left(\frac{r^{n+1}}{\sqrt{E_1^{n+1}}}\mu^{n+1}\nabla\phi^{n+1}-\frac{r_h^{n+1}}{\sqrt{E_{1h}^n}}\mu_h^n\nabla\phi_h^n,\mathbf{v}_h\right)\bigg]/\|\nabla\mathbf{v}_h\|\\
		\leq &\frac{1}{\tau}\|e_{\mathbf{u}}^{n+1}-e_{\mathbf{u}}^n\|_{-1}+\|\nabla e_{\tilde{\mathbf{u}}}^{n+1}\|+\|R_{\mathbf{u}}^n\|_{-1}+\|R_p^n\|_{-1}\notag\\
		&+C\tau\|\mathbf{u}_t\|_{L^{\infty}(0,T;L^2(\Omega))}\left(\|A\mathbf{u}^{n+1}\|+\|A\mathbf{u}^n\|\right)\notag\\
		&+C\tau\left(\|A\mathbf{u}^n\|+\|\nabla\mathbf{u}^n\|+\|\nabla\mathbf{u}_h^n\|\right)\left(\|\Phi_{\mathbf{u}}^{n}\|+\|\Theta_{\mathbf{u}}^{n}\|\right)\notag\\
		&+C\tau\|\mathbf{u}_h^n\|\|\nabla\mathbf{u}_h^n\|\left(|e_{\rho}^{n+1}|+\|\Phi_{\mathbf{u}}^n\|+\|\Theta_{\mathbf{u}}^n\|+\tau\|\mathbf{u}_t\|_{L^{\infty}(0,T;H^1(\Omega))}\right)\notag\\
		&+C\tau\left(\|\mu_t\|_{L^{\infty}(0,T;H^1(\Omega))}\|\nabla\phi^{n+1}\|+\|\mu^{n}\|_1\|\phi_{tt}\|_{L^{\infty}(0,T;L^2(\Omega))}\right)\notag\\
		&~+C\tau\|e_{\phi}^n\|\|\phi^n\|_2+C\tau\|\mu_h^n\|_1\left(\|\nabla \Phi_{\phi}^n\|+\|\nabla\Theta_{\phi}^n\|\right)\notag\\
		&~+C\tau\|\mu_h^n\|_1\|\nabla\phi_h^n\|\left(|e_{r}^{n+1}|+\|\Phi_{\phi}^n\|+\|\Theta_{\phi}^n\|+\tau\|\phi_t\|_{L^{\infty}(0,T;H^1(\Omega))}\right).\notag
	\end{flalign}
	By squaring both sides of the above inequality \eqref{eq_e_p_error_estimates_00001} and multiplying by $\tau$, and summing the results from $n = 0$ to $m$, and using the Lemmas \ref{lemma_boundedness_E_phi_Deltaphi}, \ref{lemma_truncation_boundedness}, \ref{lemma_boundedness_er_rho}, \ref{lemma_e_n1n_H_neg_boundedness}
	and the Theorem \ref{theorem_e_phi_u_L2_error_estimates}, we can derive the desired results.
\end{proof}
\section{Numerical test}\label{section_numerical_test}

This section presents three numerical experiments, each serving a distinct purpose: 

1. To validate the convergence rates stated in Theorems \ref{theorem_e_phi_u_L2_error_estimates} and \ref{theorem_p_error_estimate0001};

2. To simulate the coarsening dynamics using a random initial phase function;

3. To numerically demonstrate the unconditional stability of our scheme.

For all experiments, specific finite elements are chosen: $P_1$ for the phase function $\phi^n_h$ and chemical potential $\mu^n_h$, $P_2$ for $\tilde{\mathbf{u}}_h^n$, and the inf-sup stable pair $(P_2, P_1)$ for the velocity $\mathbf{u}^n_h$ and pressure $p^n_h$.

\subsection{Convergence tests}
The first experiment is conducted on the domain $\Omega = [0,1]\times[0,1]$ with a time frame $T = 0.1$. The physical parameters are set as follows: $M = 0.001$, $\lambda = 0.001$, $\epsilon = 0.04$, $\nu = 0.1$, $C_1 = 0.1$, $C_2 = 0.1$, and $\gamma = 1$. The time step $\tau$ is defined as $\tau = 0.1h^3$ for the first-order scheme. The manufactured solutions are chosen from \cite{2022_ChenYaoyao_CHNS_2022_AMC} and takes the form
$$\left\{
\begin{aligned}
	\phi(t,x,y) &= 2 + \sin(t)\cos(\pi x)\cos(\pi y),  \\
	\mathbf{u}(t,x,y) &= \left[\pi\sin(\pi x)^2\sin(2\pi y)\sin(t), -\pi\sin(\pi y)^2\sin(2\pi x)\sin(t)\right]^{\text{T}},  \\
	p(t,x,y) &= \cos(\pi x)\sin(\pi y)\sin(t),
\end{aligned}\right.
$$
and the exact chemical potential $\mu(t,x,y)$ is obtained by its definition, i.e., 
$$
\begin{aligned}
	\mu(t,x,y) &:= -\lambda \Delta \phi(t,x,y) + F^{\prime}\big(\phi(t,x,y)\big) \\
	&=  -\lambda \Delta \phi(t,x,y) + \frac{1}{\epsilon^2} \left(\phi(t,x,y)^3 - \phi(t,x,y)\right) \\
	&= 2\lambda \pi^2 \cos(\pi x)\cos(\pi y) \sin(t) - \frac{1}{\epsilon^2}\left[\cos(\pi x)\cos(\pi y) \sin(t) - \big(\cos(\pi x)\cos(\pi y) \sin(t) + 2\big)^3 + 2 \right].
\end{aligned}
$$
For simplicity of notations, we adopt the following notations:
\begin{equation}
	\label{eq_error_L2}
	\begin{aligned}
		\|\cdot\|_{\ell^{\infty}(L^2)} := \max\limits_{0\leq n \leq N}\|\cdot\|_{L^2}, \quad 
		|\cdot|_{\ell^{\infty}} := \max\limits_{0\leq n \leq N}|\cdot|, \quad \|\cdot\|_{\ell^{2}(L^2)} := \sqrt{\tau \sum_{n=0}^{N}\|\cdot\|_{L^2}}.
	\end{aligned}
\end{equation}
Therefore, according to Theorems \ref{theorem_e_phi_u_L2_error_estimates} and \ref{theorem_p_error_estimate0001}, the following theoretically optimal orders should be observed:
$$
\begin{aligned}
	\left\|\phi - \phi_h\right\|_{\ell^{\infty}(L^2)} & \approx  \mathcal{O}(h^2), \qquad \left\|\mu - \mu_h\right\|_{\ell^2(L^2)}  \approx  \mathcal{O}(h^2), \\
	\left\|\boldsymbol{u} - \boldsymbol{u}_h\right\|_{\ell^{\infty}(L^2)} & \approx   \mathcal{O}(h^3), \qquad \left\|p - p_h\right\|_{\ell^2(L^2)}  \approx  \mathcal{O}(h^2), \\
	\left\|\nabla(\boldsymbol{u} - \boldsymbol{u}_h)\right\|_{\ell^{\infty}(L^2)} & \approx  \mathcal{O}(h^2), \qquad \left|\rho - \rho_h\right|_{\ell^{\infty}} \approx  \mathcal{O}(\tau).
\end{aligned}
$$
The errors and convergence rates illustrated in \autoref{L2-error-convRates-FirstOrder} and \autoref{H1-error-convRates-FirstOrder} for discretization scheme is consistent with the above \emph{a prior} rates.
\begin{table}[h]
	\centering
	\fontsize{10}{10}
	\begin{threeparttable}
		\caption{$L^2$ errors and convergence orders for discretization scheme.}\label{L2-error-convRates-FirstOrder}
		\begin{tabular}{c|c|c|c|c|c|c|c|c}
			\toprule
			\multirow{2.5}{*}{$h$} &
			\multicolumn{2}{c}{$\left\|\phi-\phi_h\right\|_{\ell^{\infty}(L^2)}$} & \multicolumn{2}{c}{$\left\|\mu-\mu_h\right\|_{\ell^{2}(L^2)}$} & \multicolumn{2}{c}{$\left\|\boldsymbol{u}- \boldsymbol{u}_h\right\|_{\ell^{\infty}(L^2)}$} & \multicolumn{2}{c}{$\left\|p-p_h\right\|_{\ell^{2}(L^2)}$} \cr
			\cmidrule(lr){2-3} \cmidrule(lr){4-5} \cmidrule(lr){6-7}  \cmidrule(lr){8-9}
			& error & rate & error & rate & error & rate & error & rate \cr
			\midrule
			$\frac1{4}$  & 3.10010e$-$04  & -  & 5.44192e$-$02  &  -     & 3.71972e$-$03  & -      & 3.06217e$-$02& -      \cr
			$\frac1{8}$  & 6.04209e$-$05 & 2.26  & 7.04442e$-$03&  2.95  & 5.65230e$-$04  & 2.72 & 3.95282e$-$03& 2.95 \cr
			$\frac1{16}$ & 1.37595e$-$05 & 2.12 & 9.85320e$-$04&  2.84    &7.65473e$-$05  & 2.88& 5.08606e$-$04 &2.96\cr
			$\frac1{32}$ &  3.37201e$-$06  & 2.02 & 1.65165e$-$04 &  2.58   & 9.81105e$-$06  & 2.96& 6.87715e$-$05 & 2.88 \cr
			$\frac1{64}$ &  8.39786e$-$07  & 2.00 & 3.42849e$-$05&  2.27   & 1.23442e$-$06  & 2.99& 1.05115e$-$05 & 2.71 \cr
			\bottomrule
		\end{tabular}
	\end{threeparttable}
\end{table}

\begin{table}[h]
	\centering
	\fontsize{10}{10}
	\begin{threeparttable}
		\caption{$H^1$ errors and convergence orders for discretization scheme.}\label{H1-error-convRates-FirstOrder}
		\begin{tabular}{c|c|c|c|c|c|c|c|c}
			\toprule
			\multirow{2.5}{*}{$h$} &
			 \multicolumn{2}{c}{$\left\|\phi - \phi_h\right\|_{H^1}$} & \multicolumn{2}{c}{$\left\|\mu - \mu_h\right\|_{H^1}$} & \multicolumn{2}{c}{$\left\|\boldsymbol{u}- \boldsymbol{u}_h\right\|_{H^1}$} & \multicolumn{2}{c}{$\left\|p - p_h\right\|_{H^1}$} \cr
			\cmidrule(lr){2-3} \cmidrule(lr){4-5} \cmidrule(lr){6-7}  \cmidrule(lr){8-9}
			& error & rate & error & rate & error & rate & error & rate \cr
			\midrule
			$\frac1{4}$  & 8.4382e$-$02 &  -       & 5.39495e$-$01  & -       & 1.54128e$-$01  & -      & 1.83312e$-$01 & -      \cr
			$\frac1{8}$  & 4.3798e$-$02 &  0.95  & 2.96456e$-$01  & 0.86 &4.37273e$-$02  & 1.82 & 4.87931e$-$02& 1.91 \cr
			$\frac1{16}$ & 2.1785e$-$02 &  1.00  & 1.49348e$-$01  & 0.99  & 1.12124e$-$02  & 1.96 &2.21793e$-$02 & 1.14 \cr
			$\frac1{32}$ & 1.0887e$-$02&1.00 &7.47769e$-$02  & 1.00  & 2.82445e$-$03 & 1.99 &1.09564e$-$02& 1.02 \cr
			$\frac1{64}$ & 5.4432e$-$03&  1.00  &3.73949e$-$02  & 1.00  & 7.07441e$-$04& 2.00 & 5.45581e$-$03 & 1.00\cr
			\bottomrule
		\end{tabular}
	\end{threeparttable}
\end{table}

\subsection{Coarsening dynamics}
In this example, we choose $\Omega = [0,1]\times[0,1]$, and $M = 0.0001$, $\lambda = 0.02$, $\epsilon = 0.01$, $\nu = 1$, $C_1 = 1$, $C_2 = 0.1$, and $\gamma = 1$, with a random initial condition for the phase function with values in $[-0.1, 0.1]$, the initial chemical potential takes the same random values as phase function. The spatial length $h=1/64$, and $\tau = 0.001$. We run this test up to final time $T=5$, and record the snapshots of phase function at $t=0.001, 0.05, 0.1, 0.15, 0.3, 1, 3, 5$, respectively, in \autoref{Figure-CoarDyna}. 
\begin{figure}[h!]
	\centering
	\includegraphics[width=1\linewidth]{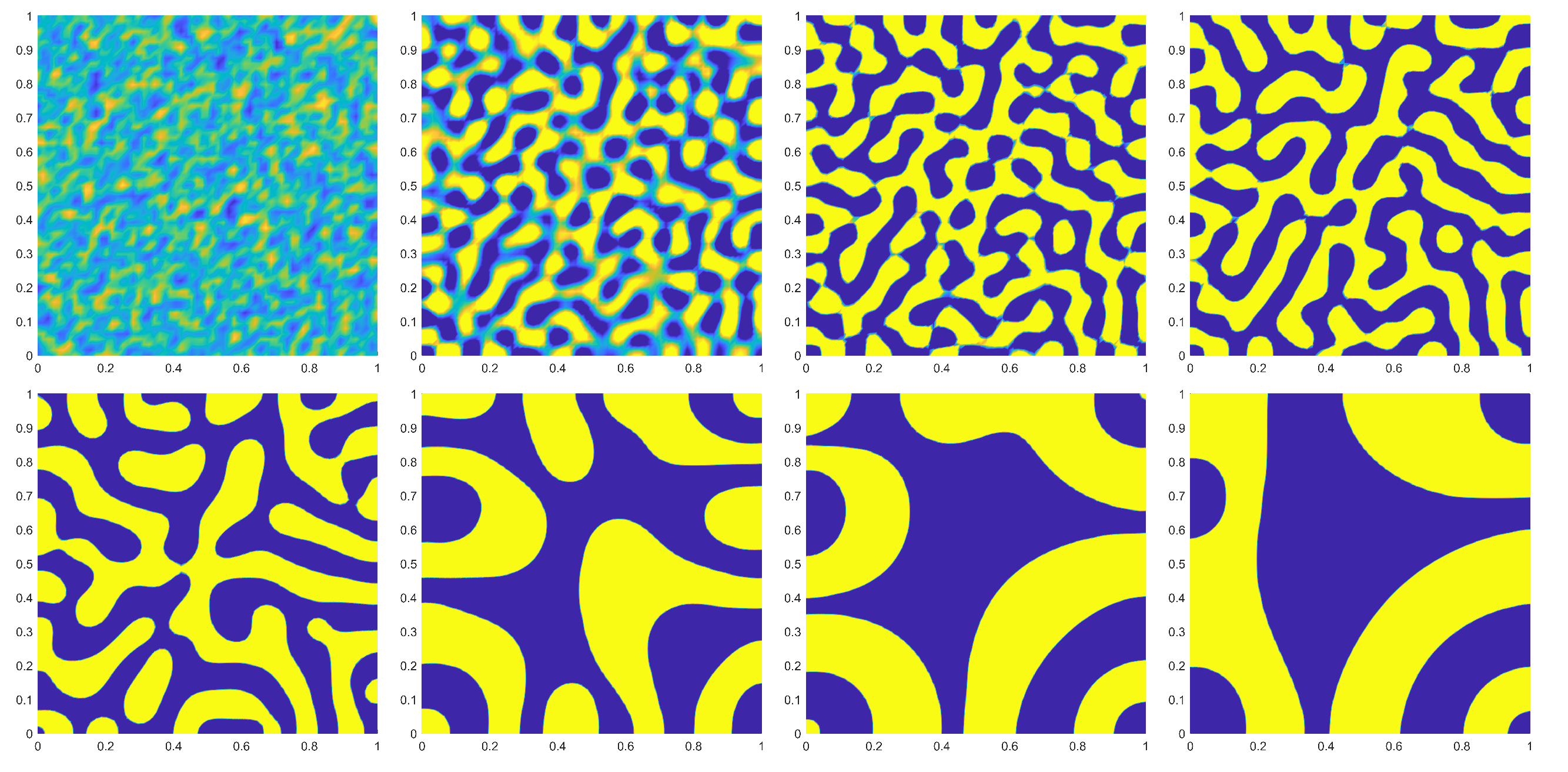}
	\caption{Snapshots of phase function at difference times from left to right row by row with $t=0.001, 0.05, 0.1, 0.15, 0.3, 1, 3, 5$, respectively.}\label{Figure-CoarDyna}
\end{figure}
\subsection{Shape Relaxation with Rotational Boundary Condition}
	In this section, we present an example of shape relaxation within the domain $\Omega = [0, 1] \times [0, 1]$, where the boundary condition is specified as $\mathbf{u} = (y - 0.5, -x + 0.5)$ on $\partial\Omega$. We assess the performance of the scheme \eqref{eq_fully_discrete_scheme_phi}-\eqref{eq_fully_discrete_scheme_q} described by equations \eqref{eq_1st_dis} using critical phase field initial conditions. Specifically, $\phi^0 = 1$ is defined within a polygonal subdomain with reentrant corners, while $\phi^0 = -1$ is set elsewhere in $\Omega$. The initial velocity $\mathbf{u}^0$ is prescribed as $(y - 0.5, -x + 0.5)$. A numerical study of this problem has been conducted in \cite{2008_Kay_David_and_Welford_Richard_Finite_element_approximation_of_a_Cahn_Hilliard_Navier_Stokes_system}. For our simulations, the chosen parameters are: $M = 0.001$, $\lambda = 0.1$, $\epsilon = 0.01$, $\nu = 1$, $\gamma=1$, $C_1=1$, $C_2=0.1$ and $T = 0.5$. The problem is solved using the scheme \eqref{eq_fully_discrete_scheme_phi}-\eqref{eq_fully_discrete_scheme_q} with a $P_1$ element for $\phi_h^{n+1}$ and $\mu_h^{n+1}$, and a $P_2 \times P_1$ element for $\mathbf{u}_h^{n+1}$ and $p_h^{n+1}$. The simulation is run to $T = 0.5$, and phase function snapshots are recorded at times $t = 0, 0.01, 0.02, 0.05, 0.08, 0.2, 0.3, 0.5$, as shown in \autoref{Figure_CHNS_MSAV_FirstOrder_ProjMethods_P1P1P2P1_Gao1_RotaBounCond}.
\begin{figure}[h!]
	\centering
	\includegraphics[width=1\linewidth]{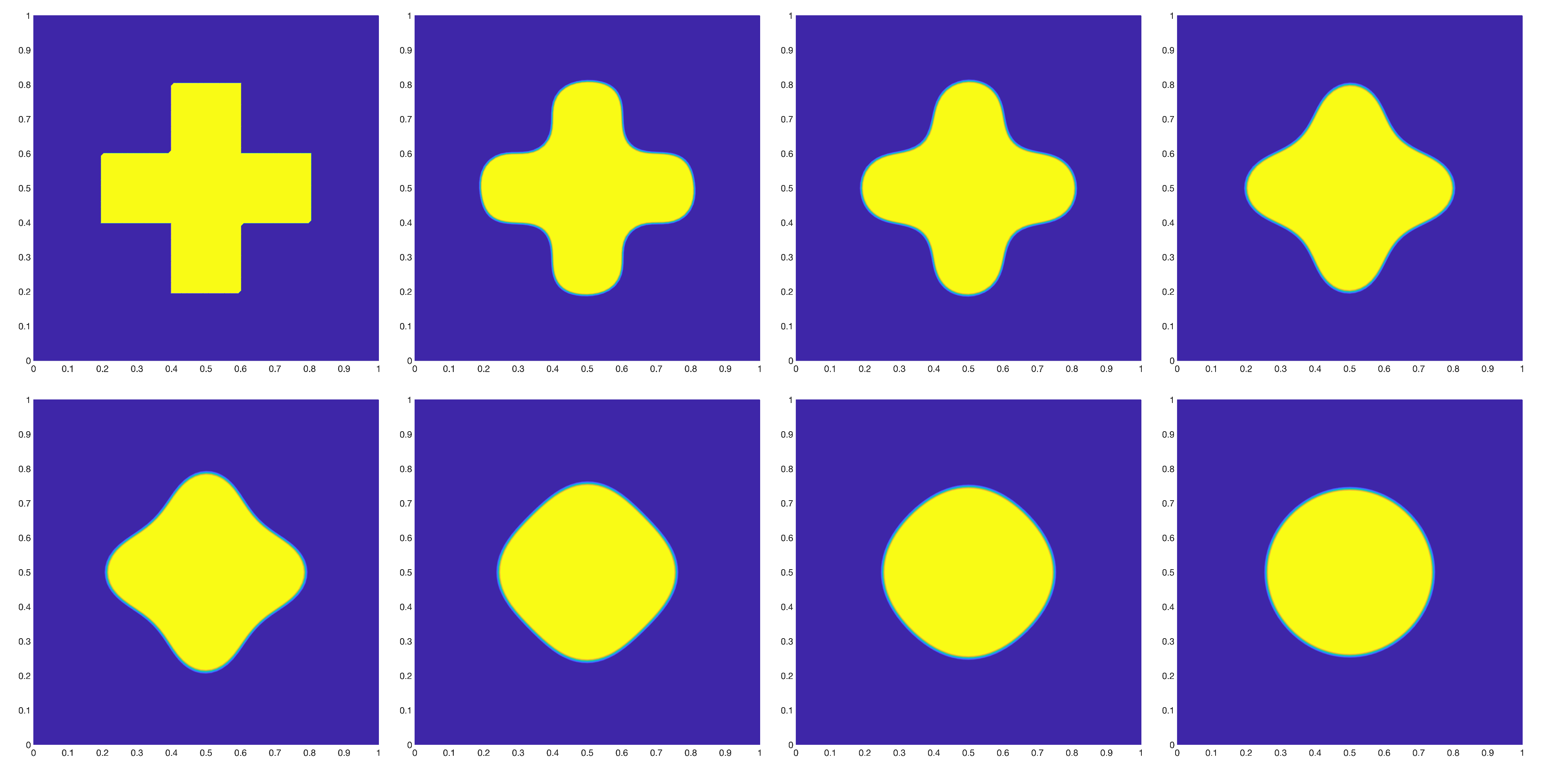}
	\caption{Snapshots of phase function at difference times from left to right row by row with $t=0, 0.01, 0.05, 0.08, 0.1, 0.2, 0.3, 0.5$, respectively.}
	\label{Figure_CHNS_MSAV_FirstOrder_ProjMethods_P1P1P2P1_Gao1_RotaBounCond}
\end{figure}

\subsection{Verification of unconditional stability}
In this final test, we aim to verify the property of unconditional stability of the new scheme here. The same settings as the one in the above coarsening dynamics are chosen but with varying time-step $\tau$. From \autoref{Figure-CoarDyna-Energy-UncondStab}, we observe that our scheme is indeed unconditionally stable in the sense of energy dissipation.
\begin{figure}[h!]
	\centering
	\includegraphics[width=1\linewidth]{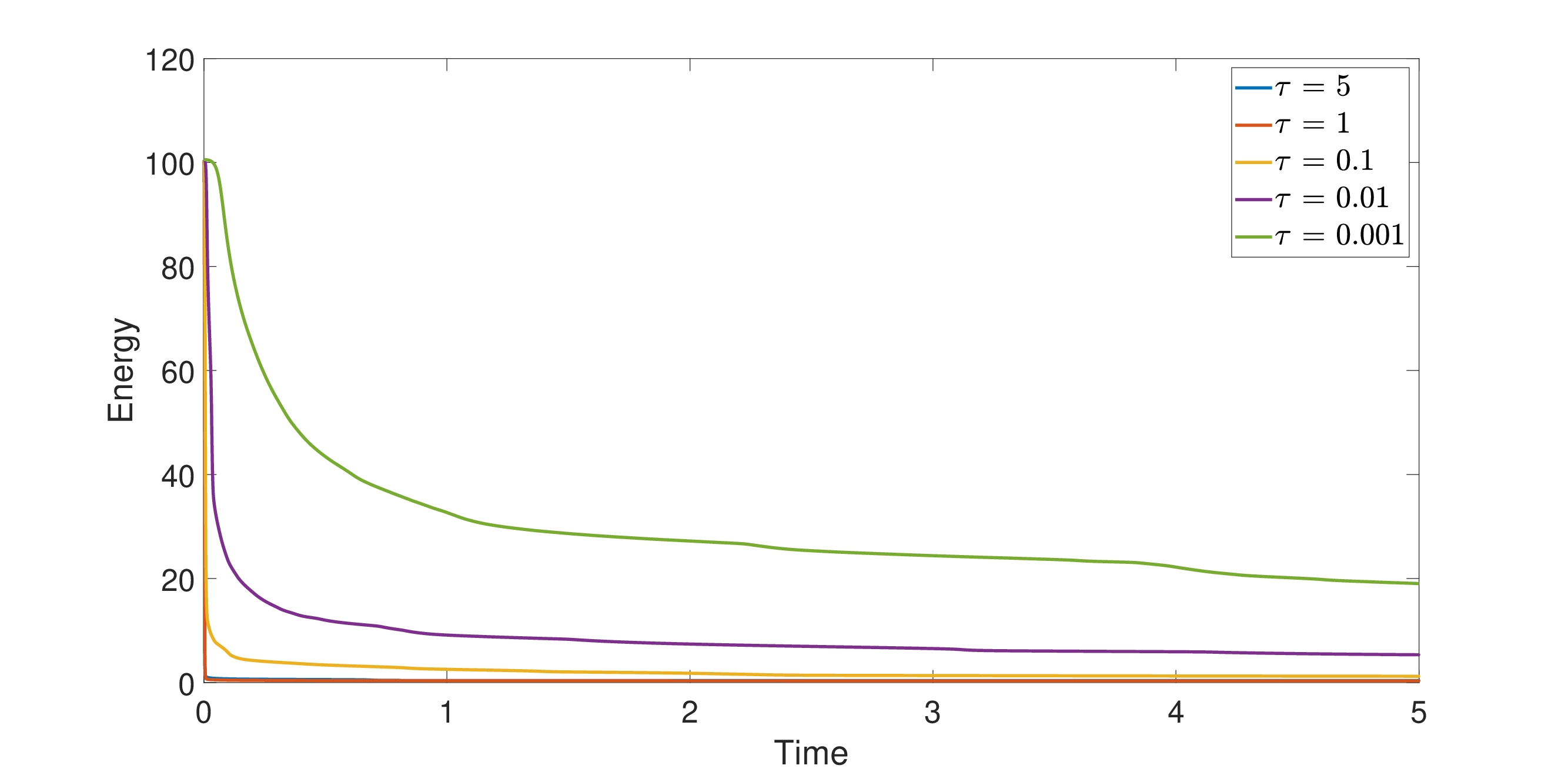}
	\caption{Evolution of the energy with different time-step $\tau$.}\label{Figure-CoarDyna-Energy-UncondStab}
\end{figure}
\section{Conclusion}\label{section_Conclusion}
This manuscript constructs a fully discrete scheme in time and space for the CHNS equations and analyzes the optimal error estimates of the related numerical scheme. The aim of this manuscript is to explore algorithms that are linear, decoupled, and unconditionally energy stable, as well as the optimal error estimates of numerical discretization schemes. 
We obtain that the numerical experimental results indicate that all variables achieve optimal convergence orders. 
In summary, this study conducted a theoretical analysis of the corresponding errors for the numerical scheme of the CHNS equations.

\section*{Acknowledgments}
	This research is supported by the National Natural Science Foundation of China (No.11971337) and the	Natural Science Foundation of Sichuan Province (No. 2025ZNSFSC0070).
	
	\bibliographystyle{unsrt}
	\bibliography{bibfile}
	
\end{document}